\newtheorem{theorem}{Theorem}[section]
\newtheorem{lemma}{Lemma}[section]
\newtheorem{remark}{Remark}[section]
\newtheorem{definition}{Definition}[section]
\newtheorem{example}{Example}[section]
\newcounter{Ex} 
\newcounter{Example-ENM}
\newcounter{Example-A} 
\newcounter{Example-B} 
\def\({\left(}
\def\){\right)}
\newlength\savedwidth
\begin{document}

\setcounter{page}{0}
\bigskip
\bigskip
\title [TG Zhao, ZY Zhao, CP Li, DX Li: Spectral approximation of $\psi$-FDEs ]
{Spectral approximation of $\psi$-fractional differential equation based on mapped Jacobi functions}

\author[TG Zhao, ZY Zhao, CP Li, DX Li: Spectral approximation of $\psi$-FDEs ] {Tinggang Zhao$^{1}$, Zhenyu Zhao$^{1}$, Changpin Li$^{2}$, Dongxia Li$^{2}$}
\thanks{$^1$School of Mathematics and Statistics, Shandong University of Technology, Zibo, 255000, China\\
\indent $^{2}$Department of Mathematics, Shanghai University, Shanghai, 200444, China \\
 \indent \,\,\, e-mail:zhtg@sdut.edu.cn(T.Z); wozitianshanglai@163.com(Z.Z); lcp@shu.edu.cn(C.L); lidongxia96@163.com(D.L)\\
\indent \,\, \em Manuscript received XXXXXXXX 20XX}

\begin{abstract}
Fractional calculus with respect to function $\psi$, also named as $\psi$-fractional calculus, generalizes the Hadamard and the Riemann-Liouville fractional calculi, which causes challenge in numerical treatment. In this paper we study spectral-type methods using mapped Jacobi functions (MJFs) as basis functions and obtain efficient algorithms to solve $\psi$-fractional differential equations. In particular, we setup the Petrov-Galerkin spectral method and spectral collocation method for initial and boundary value problems involving $\psi$-fractional derivatives. We develop basic approximation theory for the MJFs and conduct the error estimates of the derived methods. We also establish a recurrence relation to evaluate the collocation differentiation matrix for implementing the spectral collocation algorithm.
Numerical examples confirm the theoretical results and demonstrate the effectiveness of the spectral and collocation methods.

\bigskip
\noindent Keywords: Fractional calculus; Spectral method; Mapped Jacobi function; $\psi$-fractional differential equation.

\bigskip\noindent AMS Subject Classification: 65F60; 65D32; 65M12; 35K55
\end{abstract}
\maketitle

\smallskip
\section{Introduction}\label{sec:1}
In recent decades, fractional calculus has gained more and more attention due to
its extensive applications in the various fields
of applied sciences, e.g. mechanics, microchemistry, engineering, biology, and computer science \cite{DenHWX20,Hil00,KilST06}. Till now,
 there exist several different definitions of fractional calculus, such as
 Gr\"{u}nwald-Letnikov, Riemann-Liouville, Caputo, Riesz and Hadamard
 derivatives/integrals \cite{KilST06,LiC19,LiZ15, Pod99}. It is noticed that most of the work is devoted to the issues related to Riemann-Liouville, Caputo, and Riesz derivatives. Another interesting fractional operator, sometimes collectively called $\psi$-fractional calculus, is given by fractional integration and differentiation of a function with respect to another function. This class was first proposed and motivated by Osler in \cite{Osl70}, as a generalization of fractional calculus, which includes Hadamard and the classical Riemann-Liouville fractional calculi as its special cases. Hadamard fractional calculus (the case of $\psi=\log(x)$ in the $\psi$-fractional calculus) which was introduced in \cite{Had92} has been found to be useful in some practical problems related to mechanics and engineering, e.g., the Lomnitz logarithmic creep law of special substances \cite{GarMS17,Lom56}. Recently, Caputo-Hadamard derivative is applied to studying the dynamic evolution of COVID-19 caused by the Omicron variant via the Caputo-Hadamard fractional SEIR model \cite{CaiKL22}.

The existing numerical methods to deal with the $\psi$-fractional derivative are much less than those to solve fractional differential equations (FDEs) with the Riemann-Liouville, Caputo and Riesz derivatives (see \cite{DieFFL05,DinLY17,LiC19,LiZ15,LiZL12,ZenLLBTA14} and an exhaustive list of the references). Almeida et al. \cite{AlmMM18} studied the existence and uniqueness of the solution to the initial value problem of $\psi$-Caputo-type nonlinear FDEs and developed the Picard iteration method for solving numerically the problem.
Li et al. \cite{LiLW20} considered the Caputo-Hadamard fractional partial differential equation with finite difference method and the local discontinuous Galerkin method. Fan et al. \cite{FanLL22} proposed the numerical formulas for approximating the Caputo-Hadamard fractional derivatives. More recently, Zaky et al. \cite{ZakHS22} applied the logarithmic Jacobi collocation method for the Caputo-Hadamard fractional nonlinear ordinary differential equations. Zhao et al. \cite{ZhaLL23} presented an efficient spectral collocation method for the Caputo-Hadamard FDEs. Fan et al. \cite{FanLS23} studied the $\psi$-Caputo derivative and presented several numerical schemes based on the finite difference method to discretize the fractional derivative. On the other hand, Li and Li \cite{LiL21} studied the stability of the solution to the Hadamard FDE,  where the decay law of the solution is also established. Li and Li \cite{LiL22} obtained the finite time blow-up and global existence of the solution to Cauchy problem of the semilinear time-space fractional diffusion equation with time $\psi$-Caputo derivative. Li and Li \cite{LiL22JMS} studied the blow-up of the solution to a semilinear time-space fractional diffusion equation, where the time derivative is the Caputo-type derivative with the exponential kernel, called the exponential
Caputo derivative.

From a series of works on Hadamard type derivatives \cite{FanLL22,LiL21,LiLW20,ZakHS22,ZhaLL23}, approximating this kind of fractional derivatives consults logarithmic function. In fact, applying such an idea to Caputo-type fractional differential equations and Volterra integral equations with weak singularities may achieve nice results \cite{CheS22,CheS23,CheSZZ20,YanT21}. In this paper, motivated by the above cited works, we aim to develop a high accuracy spectral and spectral collocation method which uses the mapped Jacobi functions (MJFs) as the basis functions for solving fractional differential equations with $\psi$-fractional derivatives. The main contributions of this work are as follows:
\begin{enumerate}
  \item [$\bullet$]We prove the well-posedness of four kinds of the initial and boundary value problems of the $\psi$-fractional ordinary differential equation.
  \item [$\bullet$]We define the MJFs and derive the approximate results of orthogonal projection and interpolation based on the MJFs.
  \item [$\bullet$]We conduct the error estimates of the presented Petrov-Galerkin numerical schemes based on the approximate results.
  \item [$\bullet$]We derive the differentiation matrix of the fractional $\psi$-fractional derivative and give a fast and stable evaluation method based on recurrence relationship.
  \item [$\bullet$]{We confirm the theoretical results and demonstrate the effectiveness of the proposed methods by solving the initial and boundary value problems.}
\end{enumerate}

The paper is organized as follows. In Section \ref{sec:2}, we recall the definitions and properties of $\psi$-fractional calculus and present the well-posedness of the initial and boundary problems involving $\psi$-fractional derivatives. The definition of the MJFs is given in Section \ref{sec:3}. We also build its properties and approximate results in the section. We analyze the Petrov-Galerkin spectral method and spectral collocation method in Section \ref{sec:4}. We consider in parallel the initial value problems and the boundary value problems. Numerical experiments are presented in Section \ref{sec:5}. The theoretical results about the presented approaches are confirmed by the numerical experiments, as well as the effectiveness of the presented approaches. Some conclusions are displayed in Section \ref{sec:6}.

Throughout this paper, $\mu$ is always a positive real number, $a,b$ are two real numbers satisfying $a<b$. $\psi$ is an increasing real $C^m(m=\lceil\mu\rceil)$ function on $[a,b]$ such that $\psi'>0$ for all $x\in [a,b]$. Denote $\psi_a=\psi(a), \psi_b=\psi(b),\kappa=\frac{2}{\psi_b-\psi_a}$. We use the
mapping
\begin{equation}\label{map_all}
	s(x):=\kappa(\psi(x)-\psi_a)-1=-\kappa(\psi_b-\psi(x))+1, \quad x\in[a,b]
\end{equation}
to map $[a,b]$ to $I:=[-1,1]$. It is
easy to know
\begin{equation}\label{tranf}
	{\rm{d}}s=\kappa \psi'(x){\rm{d}}x.
\end{equation}
Symbol ${{\delta}_\psi^{n}}$ ($n$, a non-negative integer number) is used for a generalized differential operator defined by
\begin{equation}\label{delta-de}
 {{\delta}^{0}_\psi} f(x)=f(x),\quad {{\delta }^{1}_\psi}=\frac{1}{\psi'(x)}\frac{{\rm{d}}}{{\rm{d}}x},\quad {{\delta}^{n}_\psi}={{\delta }^{1}_\psi}\left({{\delta}^{n-1}_\psi }\right).
\end{equation}

\section{Preliminaries}\label{sec:2}
\subsection{The $\psi$-fractional calculus} \label{subsec:2.1}

 Let $p\geq1$ and $L^p_\psi(a,b)$ is the $p$-integrable space which includes the Lebesgue measurable functions on the finite interval $(a,b)$ with respect to measure ${\rm d}\psi$, i.e.,
$$ L^p_\psi(a,b):=\left\{ u:  \int_a^b |u(x)|^p{\rm d}\psi<\infty\right\}.$$
$L^p_\psi(a,b)$ is a Banach space with norm
$$\|u\|_{L^p_\psi}=\left(\int_a^b|u(x)|^p{\rm d}\psi\right)^{1/p}.$$
When $p=2$, space $L^2_\psi(a,b)$ turns out a Hilbert space with inner product
$$(u,v)_{\psi}=\int_a^b u(x)v(x){\rm d}\psi.$$
We also use $L^2_\omega(a,b)$ for weighted Hilbert space with inner product
$$(u,v)_{\omega}=\int_a^b u(x)v(x)\omega(x){\rm d}x,$$
and norm $\|u\|_{\omega}$ as usual.

Denote by $C[a,b]$ all the continuous functions in $[a,b]$ and $AC[a,b]$ all the absolutely continuous functions in $[a,b]$.  For $n \in \mathbb{N}^+$, we define $AC^n_\psi$ as
	\begin{equation*}
		AC_\psi^n[a,b]:=\left\{u:[a,b]\rightarrow \mathbb{R},\quad {\delta_{\psi}^n } u(x)\in AC[a,b]\right\}.
	\end{equation*}

\begin{definition}\cite{FahFRS21, KilST06}\label{def_psiI} The left-sided and right-sided $\psi$-fractional integral of a given function $f(x)$ with order $\mu > 0$ are defined, respectively, as
\begin{equation*}
\begin{split}
  {_\psi{\rm D}_{a,x}^{-\mu}}f(x)=& \frac{1}{\Gamma(\mu)}\int_a^x\psi'(\tau)\left(\psi(x)-\psi(\tau)\right)
  ^{\mu-1}f(\tau){\rm d}\tau,\quad x\in (a,b),\\
   {_\psi{\rm D}_{x,b}^{-\mu}}f(x)=& \frac{1}{\Gamma(\mu)}\int_x^b\psi'(\tau)\left(\psi(\tau)-\psi(x)\right)
  ^{\mu-1}f(\tau){\rm d}\tau,\quad x\in (a,b).
\end{split}
\end{equation*}
\end{definition}

In the above definition, the condition $f(x)\in L^1_\psi(a,b)$ is often presumed albeit sufficient but applicable.

We remark the choices of $\psi$ in the above definition as follows.
\begin{remark}\label{Remark_psi}The fractional operators ${_\psi{\rm D}_{a,x}^{-\mu}}$ and ${_\psi{\rm D}_{x,b}^{-\mu}}$ given by Definition \ref{def_psiI} generalize several existing fractional integral operators:
\begin{enumerate}
  \item [$\bullet$] $\psi(x)=x$ is the classical Riemann-Liouville-type fractional integral.
  \item [$\bullet$] $\psi(x)=\log(x)$ is the Hadamard fractional integral  \cite{FanLL22,LiL21,LiLW20,ZakHS22}.
  \item [$\bullet$] $\psi(x)=\exp(x)$ is the fractional integral with exponential kernel \cite{LiL22JMS}.
\end{enumerate}
\end{remark}

\begin{lemma}\label{lmm-prp-Int-SG1} \cite{FahFRS21} The following semigroup property is valid: for $\mu,\nu>0$
\begin{equation*}
  {_\psi{\rm D}^{-\mu}_{a,x}} ({_\psi{\rm D}^{-\nu}_{a,x}}f(x))={_\psi{\rm D}^{-(\mu+\nu)}_{a,x}}f(x),\quad
   {_\psi{\rm D}^{-\mu}_{x,b}} ({_\psi{\rm D}^{-\nu}_{x,b}}f(x))={_\psi{\rm D}^{-(\mu+\nu)}_{x,b}}f(x).
\end{equation*}
\end{lemma}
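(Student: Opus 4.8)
The plan is to exploit the fact that the $\psi$-fractional integral is nothing but the classical Riemann--Liouville integral read through the change of variable $s=\psi(\tau)$. Concretely, for a function $f$ on $[a,b]$ set $g(s)=f(\psi^{-1}(s))$, a function on $[\psi_a,\psi_b]$; since $\psi$ is increasing and $C^m$ with $\psi'>0$, the inverse $\psi^{-1}$ is well defined, and the substitution $s=\psi(\tau)$, ${\rm d}s=\psi'(\tau){\rm d}\tau$, turns
\[
{_\psi{\rm D}_{a,x}^{-\mu}}f(x)=\frac{1}{\Gamma(\mu)}\int_a^x\psi'(\tau)\left(\psi(x)-\psi(\tau)\right)^{\mu-1}f(\tau){\rm d}\tau
\]
into $\frac{1}{\Gamma(\mu)}\int_{\psi_a}^{\psi(x)}(\psi(x)-s)^{\mu-1}g(s)\,{\rm d}s$, i.e. the ordinary left Riemann--Liouville integral of $g$ evaluated at the point $\psi(x)$. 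Thus ${_\psi{\rm D}_{a,x}^{-\mu}}$ is conjugate, via the composition map $f\mapsto f\circ\psi^{-1}$, to the classical operator, and the classical semigroup identity transfers verbatim because composing two $\psi$-integrals corresponds to composing the two classical integrals at the same base point $\psi_a$.

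Since that conjugation argument leans on the classical result, I would in practice prove the identity directly, which also makes the mechanism transparent. Writing the left-hand side as a double integral over $a\le\sigma\le\tau\le x$, I would first interchange the order of integration to obtain
\[
\frac{1}{\Gamma(\mu)\Gamma(\nu)}\int_a^x\psi'(\sigma)f(\sigma)\left[\int_\sigma^x\psi'(\tau)\left(\psi(x)-\psi(\tau)\right)^{\mu-1}\left(\psi(\tau)-\psi(\sigma)\right)^{\nu-1}{\rm d}\tau\right]{\rm d}\sigma .
\]
The inner $\tau$-integral is evaluated by the substitution $u=\psi(\tau)$ followed by the affine rescaling $u=\psi(\sigma)+(\psi(x)-\psi(\sigma))w$, $w\in[0,1]$, which collapses it to $(\psi(x)-\psi(\sigma))^{\mu+\nu-1}B(\nu,\mu)$ with $B(\nu,\mu)=\Gamma(\mu)\Gamma(\nu)/\Gamma(\mu+\nu)$. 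Substituting back, the Gamma factors cancel and leave exactly ${_\psi{\rm D}^{-(\mu+\nu)}_{a,x}}f(x)$. The right-sided identity is proved identically, with $u=\psi(\tau)$ on $[\psi(x),\psi(\sigma)]$ and the roles of the two factors swapped.

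The one genuine technical point is the justification of the interchange of integration, since for $\mu<1$ or $\nu<1$ the kernels are singular at $\tau=x$ and $\tau=\sigma$ respectively. The safe route is Tonelli's theorem: replacing $f$ by $|f|$ makes the integrand nonnegative, and the inner integral computed above is finite (it equals a constant times $(\psi(x)-\psi(\sigma))^{\mu+\nu-1}$, which is integrable in $\sigma$ against $\psi'(\sigma)$), so for $f\in L^1_\psi(a,b)$ the double integral converges absolutely and Fubini applies; the endpoint singularities are integrable precisely because $\mu,\nu>0$. Everything else is the routine Beta-integral computation, so no further obstacle is expected.
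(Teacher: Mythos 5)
Your proof is correct. Note, however, that the paper does not prove this lemma at all: it is stated as a known result with a citation to the reference on Hadamard-type fractional calculus with respect to functions, so there is no in-paper argument to compare against. Both of your routes are sound. The conjugation argument (the substitution $s=\psi(\tau)$ turns ${_\psi{\rm D}_{a,x}^{-\mu}}$ into the classical left Riemann--Liouville integral of $f\circ\psi^{-1}$ based at $\psi_a$, so the classical semigroup law transfers) is essentially how such identities are obtained in the literature this lemma is quoted from. Your direct proof --- interchange of the order of integration over the triangle $a\le\sigma\le\tau\le x$, evaluation of the inner integral as $(\psi(x)-\psi(\sigma))^{\mu+\nu-1}B(\nu,\mu)$ via the affine rescaling, and cancellation of the Gamma factors --- is the standard Dirichlet-formula computation, and your appeal to Tonelli's theorem for $f\in L^1_\psi(a,b)$ correctly handles the integrable endpoint singularities when $\mu<1$ or $\nu<1$; this matches the integrability hypothesis the paper itself notes after Definition \ref{def_psiI}. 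The right-sided case is indeed symmetric. No gaps.
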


\begin{lemma}\label{lmm-prp-Int-FP1}\cite{AlmMM18,KilST06} The following results are valid: for $\mu>0, \gamma>-1$
\begin{equation*}
  \begin{split}
   {_\psi{\rm D}^{-\mu}_{a,x}}\left(\psi(x)-\psi_a\right)^{\gamma}&=
  \frac{\Gamma(\gamma+1)}{\Gamma(\gamma+\mu+1)}\left(\psi(x)-\psi_a\right)^{\gamma+\mu},\\
  {_\psi{\rm D}^{-\mu}_{x,b}}\left(\psi_b-\psi(x)\right)^{\gamma}&=
  \frac{\Gamma(\gamma+1)}{\Gamma(\gamma+\mu+1)}\left(\psi_b-\psi(x)\right)^{\gamma+\mu}.
  \end{split}
\end{equation*}
\end{lemma}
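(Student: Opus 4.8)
The plan is to reduce both identities to the classical Euler Beta-integral evaluation by exploiting the monotone change of variables $u=\psi(\tau)$, which is licensed by the standing hypothesis $\psi'>0$. I will treat the left-sided identity in detail; the right-sided one follows by an entirely symmetric argument.

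First I would insert $f(\tau)=(\psi(\tau)-\psi_a)^\gamma$ into the defining integral of Definition \ref{def_psiI}, obtaining
\begin{equation*}
{_\psi{\rm D}^{-\mu}_{a,x}}(\psi(x)-\psi_a)^\gamma=\frac{1}{\Gamma(\mu)}\int_a^x\psi'(\tau)(\psi(x)-\psi(\tau))^{\mu-1}(\psi(\tau)-\psi_a)^\gamma\,{\rm d}\tau.
\end{equation*}
The factor $\psi'(\tau)\,{\rm d}\tau$ is exactly ${\rm d}\psi(\tau)$, so substituting $u=\psi(\tau)$ --- a bijection of $[a,x]$ onto $[\psi_a,\psi(x)]$ since $\psi$ is strictly increasing --- turns this into the ordinary Riemann--Liouville convolution
\begin{equation*}
\frac{1}{\Gamma(\mu)}\int_{\psi_a}^{\psi(x)}(\psi(x)-u)^{\mu-1}(u-\psi_a)^\gamma\,{\rm d}u.
\end{equation*}

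Next I would rescale the interval of integration to $[0,1]$ via $u=\psi_a+t(\psi(x)-\psi_a)$, which factors the power $(\psi(x)-\psi_a)^{\gamma+\mu}$ out of the integral and leaves the Beta integral $\int_0^1 t^\gamma(1-t)^{\mu-1}\,{\rm d}t = B(\gamma+1,\mu)$. Using $B(\gamma+1,\mu)=\Gamma(\gamma+1)\Gamma(\mu)/\Gamma(\gamma+\mu+1)$ and cancelling the prefactor $\Gamma(\mu)$ yields the claimed formula.

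The only point requiring care --- and the place where the hypotheses $\gamma>-1$ and $\mu>0$ are genuinely used --- is the convergence of the Beta integral: integrability at $t=0$ needs $\gamma>-1$, while integrability at $t=1$ needs $\mu>0$. Once these are in force the computation is elementary, and the right-sided identity is obtained by the analogous substitution $u=\psi(\tau)$ mapping $[x,b]$ onto $[\psi(x),\psi_b]$ followed by the rescaling $u=\psi(x)+t(\psi_b-\psi(x))$.
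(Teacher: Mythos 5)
Your proof is correct and complete: the substitution $u=\psi(\tau)$ reduces the $\psi$-fractional integral of $(\psi(\tau)-\psi_a)^\gamma$ to the classical Riemann--Liouville integral of a power function, and the Beta-integral evaluation (with the hypotheses $\gamma>-1$, $\mu>0$ correctly identified as the convergence conditions) gives the stated constant. Note that the paper does not prove this lemma at all --- it is quoted from the cited references --- so your argument supplies the standard derivation that those sources use, and there is nothing to compare against beyond observing that your route is the expected one.
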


\begin{definition}\label{def_HD}\cite{FahFRS21} The left-sided and right-sided $\psi$-Riemann-Liouville fractional derivative (simply, $\psi$-derivative)
of a given function $f(x)$ with order $\mu > 0$ are defined for $x\in[a,b]$, respectively, as
\begin{equation*}
\begin{split}
  {_\psi{\rm D}^{\mu}_{a,x}}f(x)=& {\delta^n_\psi}\left[{_\psi{\rm D}^{-(n-\mu)}_{a,x}}f(x)\right]
  =\frac{1}{\Gamma(n-\mu)}{\delta^n_\psi} \int_a^x\psi'(\tau)\left(\psi(x)-\psi(\tau)\right)
  ^{n-\mu-1}f(\tau){\rm{d}}\tau,\\
   {_\psi{\rm D}^{\mu}_{x,b}}f(x)=& (-1)^n{\delta^n_\psi}\left[{_\psi{\rm D}^{-(n-\mu)}_{x,b}}f(x)\right]
  =\frac{(-1)^n}{\Gamma(n-\mu)} {\delta^n_\psi} \int_x^b\psi'(\tau)\left(\psi(\tau)-\psi(x)\right)
  ^{n-\mu-1}f(\tau){{\rm{d}}\tau},\\
  \end{split}
\end{equation*}
where $\mu\in(n-1, n), n\in\mathbb{N}^+.$
\end{definition}

\begin{definition}\label{def_CHD} \cite{FahFRS21} The left-sided and right-sided $\psi$-Caputo fractional derivative
of a given function $f(x)$ with order $\mu\in(n-1,n), n\in\mathbb{N}^+$ are defined for $x\in[a,b]$, respectively, as
\begin{equation*}
\begin{split}
  {_{C\psi}{\rm D}^{\mu}_{a,x}}f(x)=&~{_\psi{\rm D}^{-(n-\mu)}_{a,x}}\left[ {\delta^n_\psi} f(x)\right]
  =\frac{1}{\Gamma(n-\mu)} \int_a^x\psi'(\tau)\left(\psi(x)-\psi(\tau)\right)
  ^{n-\mu-1}{\delta^n_\psi} f(\tau){\rm{d}}\tau,\\
   {_{C\psi}{\rm D}^{\mu}_{x,b}}f(x)=&~ (-1)^n{_\psi{\rm D}^{-(n-\mu)}_{a,x}}\left[{\delta^n_\psi} f(x)\right]
  =\frac{(-1)^n}{\Gamma(n-\mu)} \int_x^b\psi'(\tau)\left(\psi(\tau)-\psi(x)\right)
  ^{n-\mu-1}{\delta^n_\psi} f(\tau){{\rm{d}}\tau}.
  \end{split}
\end{equation*}
\end{definition}

In Definitions \ref{def_HD} and \ref{def_CHD}, the sufficient condition $f(x)\in AC^n_\psi[a,b]$ is often assumed.

\begin{lemma}\label{lmm-link-C2H}\cite{FahFRS21}
The following link between ${_{C\psi}{\rm D}^{\mu}_{a,x}}$ and ${_\psi{\rm D}^{\mu}_{a,x}}$ holds:
\begin{equation*}
  {_{C\psi}{\rm D}^{\mu}_{a,x}}f(x)=~
  {_\psi{\rm D}^{\mu}_{a,x}}\left[f(x)-\sum_{k=0}^{n-1}
  \frac{{\delta}^{k}_{\psi}f(a)}{k!}\left(\psi(x)-\psi_a\right)^k\right],\quad x\in[a,b],
\end{equation*}
where $\mu\in(n-1, n), ~{\delta}^{k}_\psi f(a)$ exists for $k=0,1,\cdots,n-1$ due to $f(x)\in AC^n_\psi[a,b]$. Similarly, there holds for $x\in[a,b]$,
\begin{equation*}
  {_{C\psi}{\rm D}^{\mu}_{x,b}}f(x)=~{_\psi{\rm D}^{\mu}_{x,b}}\left[f(x)-\sum_{k=0}^{n-1}
  \frac{(-1)^k{\delta}^{k}_\psi f(b)}{k!}\left(\psi_b-\psi(x)\right)^k\right].
\end{equation*}
\end{lemma}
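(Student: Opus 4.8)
The plan is to deduce the identity from two preparatory facts together with the semigroup property already recorded in Lemma~\ref{lmm-prp-Int-SG1}. The first fact is a $\psi$-Taylor formula with integral remainder,
\[
f(x)=\sum_{k=0}^{n-1}\frac{\delta_\psi^{k}f(a)}{k!}\big(\psi(x)-\psi_a\big)^{k}+{_\psi{\rm D}^{-n}_{a,x}}\big[\delta_\psi^{n}f\big](x),
\]
and the second is the cancellation law $\delta_\psi^{n}\,{_\psi{\rm D}^{-n}_{a,x}}=\mathrm{Id}$ (the fundamental theorem for the operators $\delta_\psi$ and ${_\psi{\rm D}^{-1}_{a,x}}$). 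Both are most painlessly obtained through the substitution $s=\psi(x)$, $g:=f\circ\psi^{-1}$: the chain rule gives $\delta_\psi^{k}f(x)=g^{(k)}(\psi(x))$, while $\sigma=\psi(\tau)$ in Definition~\ref{def_psiI} identifies ${_\psi{\rm D}^{-\alpha}_{a,x}}f(x)$ with the ordinary Riemann--Liouville integral of $g$ evaluated at $\psi(x)$; the two facts then become exactly the classical Taylor formula and the statement that $\frac{d^n}{ds^n}$ undoes the $n$-fold integral. Alternatively the $\psi$-Taylor formula follows by induction on $n$, the base case $n=1$ being $\int_a^x\psi'(\tau)\,\delta_\psi^{1}f(\tau)\,{\rm d}\tau=f(x)-f(a)$ and the inductive step an integration by parts.

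Granting these, I would denote the $\psi$-Taylor polynomial by $P_{n-1}(x)=\sum_{k=0}^{n-1}\frac{\delta_\psi^{k}f(a)}{k!}(\psi(x)-\psi_a)^{k}$, so that the remainder reads ${_\psi{\rm D}^{-n}_{a,x}}[\delta_\psi^{n}f]=f-P_{n-1}$, and apply ${_\psi{\rm D}^{\mu}_{a,x}}=\delta_\psi^{n}\,{_\psi{\rm D}^{-(n-\mu)}_{a,x}}$. Using Lemma~\ref{lmm-prp-Int-SG1} to write ${_\psi{\rm D}^{-(n-\mu)}_{a,x}}{_\psi{\rm D}^{-n}_{a,x}}={_\psi{\rm D}^{-(2n-\mu)}_{a,x}}={_\psi{\rm D}^{-n}_{a,x}}{_\psi{\rm D}^{-(n-\mu)}_{a,x}}$ and then the cancellation law, I would obtain
\[
{_\psi{\rm D}^{\mu}_{a,x}}\big[f-P_{n-1}\big]=\delta_\psi^{n}\,{_\psi{\rm D}^{-n}_{a,x}}\,{_\psi{\rm D}^{-(n-\mu)}_{a,x}}\big[\delta_\psi^{n}f\big]={_\psi{\rm D}^{-(n-\mu)}_{a,x}}\big[\delta_\psi^{n}f\big]={_{C\psi}{\rm D}^{\mu}_{a,x}}f,
\]
which is the assertion. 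As an internal check, the subtracted piece ${_\psi{\rm D}^{\mu}_{a,x}}P_{n-1}$ can be evaluated term by term from Lemma~\ref{lmm-prp-Int-FP1}, recovering $\sum_k \frac{\delta_\psi^{k}f(a)}{\Gamma(k-\mu+1)}(\psi(x)-\psi_a)^{k-\mu}$, the usual correction between the two derivatives.

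The right-sided identity is obtained the same way, based now at $b$: the companion $\psi$-Taylor formula expands in powers of $\psi_b-\psi(x)$, so under $\delta_\psi$ each term contributes a sign and the remainder is ${_\psi{\rm D}^{-n}_{x,b}}[\delta_\psi^{n}f]$, producing the alternating factor $(-1)^k\delta_\psi^{k}f(b)$; the operator algebra is verbatim with ${_\psi{\rm D}^{\mu}_{x,b}}=(-1)^n\delta_\psi^{n}{_\psi{\rm D}^{-(n-\mu)}_{x,b}}$. I expect the only real work to lie in the two preparatory ingredients rather than in the final algebra: one must verify that the standing hypothesis $f\in AC^n_\psi[a,b]$ is precisely what guarantees $g^{(n)}\in L^1$, hence the validity of the Taylor remainder and the legitimacy of commuting $\delta_\psi^{n}$ past the singular integral ${_\psi{\rm D}^{-(n-\mu)}_{a,x}}$. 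This regularity bookkeeping, together with the matching of $\delta_\psi^{k}f(a)$ with $g^{(k)}(\psi_a)$, is where care is needed.
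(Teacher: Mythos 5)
The paper does not prove this lemma at all: it is imported verbatim from \cite{FahFRS21}, so there is no internal argument to compare yours against. Judged on its own merits, your proof is correct and is the standard route: the $\psi$-Taylor formula with integral remainder identifies $f-\sum_{k=0}^{n-1}\frac{\delta_\psi^k f(a)}{k!}(\psi(x)-\psi_a)^k$ with ${_\psi{\rm D}^{-n}_{a,x}}[\delta_\psi^n f]$, and then the chain $\delta_\psi^n\,{_\psi{\rm D}^{-(n-\mu)}_{a,x}}\,{_\psi{\rm D}^{-n}_{a,x}}=\delta_\psi^n\,{_\psi{\rm D}^{-n}_{a,x}}\,{_\psi{\rm D}^{-(n-\mu)}_{a,x}}={_\psi{\rm D}^{-(n-\mu)}_{a,x}}$ (semigroup property of Lemma \ref{lmm-prp-Int-SG1} plus the cancellation law) lands exactly on the $\psi$-Caputo derivative; the reduction to the classical setting via $g=f\circ\psi^{-1}$ is legitimate and correctly matches $\delta_\psi^k f$ with $g^{(k)}\circ\psi$, and your closing remark about where the regularity hypothesis $f\in AC_\psi^n[a,b]$ enters is the right place to be careful. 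Your internal check of ${_\psi{\rm D}^\mu_{a,x}}P_{n-1}$ via Lemma \ref{lmm-prp-Int-FP1} is also consistent. The only slip is in the right-sided sketch: the Taylor remainder based at $b$ is $(-1)^n\,{_\psi{\rm D}^{-n}_{x,b}}[\delta_\psi^n f]$, not ${_\psi{\rm D}^{-n}_{x,b}}[\delta_\psi^n f]$ as you wrote in passing; that factor is needed to cancel against the $(-1)^n$ in the definitions of ${_\psi{\rm D}^\mu_{x,b}}$ and ${_{C\psi}{\rm D}^\mu_{x,b}}$ and against $\delta_\psi^n\,{_\psi{\rm D}^{-n}_{x,b}}=(-1)^n\,\mathrm{Id}$. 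With that sign restored the right-sided identity follows exactly as you describe.
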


\begin{lemma}\label{lmm-prp-Inv-RL}\cite{AlmMM18,FahFRS21} For $f\in C[a,b]$ and $\mu>0$, there hold
$$  {_\psi{\rm D}^{\mu}_{a,x}}\left({_\psi{\rm D}^{-\mu}_{a,x}}f(x)\right)=f(x),\quad
{_\psi{\rm D}^{\mu}_{x,b}}\left({_\psi{\rm D}^{-\mu}_{x,b}}f(x)\right)=f(x),$$
and for $f\in AC_\psi^n[a,b]$,
\begin{equation*}
  \begin{split}
  {_\psi{\rm D}^{-\mu}_{a,x}}\left({_\psi{\rm D}^{\mu}_{a,x}}f(x)\right) =&
    f(x)
-\sum_{k=0}^{n-1}\frac{[{_\psi{\rm D}^{\mu-k-1}_{a,x}}f](a)}{\Gamma(\mu-k)}
   \left(\psi(x)-\psi_a\right)^{\mu-k-1},\\
   {_\psi{\rm D}^{-\mu}_{x,b}}\left({_\psi{\rm D}^{\mu}_{x,b}}f(x)\right) =&
    f(x)
-\sum_{k=0}^{n-1}\frac{[{_\psi{\rm D}^{\mu-k-1}_{x,b}}f](a)}{\Gamma(\mu-k)}
   \left(\psi_b-\psi(x)\right)^{\mu-k-1}.
  \end{split}
\end{equation*}
\end{lemma}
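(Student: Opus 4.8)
The plan is to exploit the fact that the $\psi$-fractional calculus is the classical Riemann--Liouville calculus pulled back through the substitution $t=\psi(x)$. For $f$ on $[a,b]$ put $\tilde f(t):=f(\psi^{-1}(t))$ on $[\psi_a,\psi_b]$; since $\psi\in C^m$ with $\psi'>0$, the map $x\mapsto t=\psi(x)$ is a $C^m$-diffeomorphism of $[a,b]$ onto $[\psi_a,\psi_b]$, so $\psi^{-1}$ and $\tilde f$ are well defined. First I would record the two transformation rules that drive the whole argument. Substituting $\sigma=\psi(\tau)$ in Definition \ref{def_psiI} gives, with $t=\psi(x)$,
\begin{equation*}
  {_\psi{\rm D}^{-\mu}_{a,x}}f(x)=\frac{1}{\Gamma(\mu)}\int_{\psi_a}^{t}(t-\sigma)^{\mu-1}\tilde f(\sigma)\,{\rm d}\sigma,
\end{equation*}
which is precisely the left classical Riemann--Liouville integral of $\tilde f$ based at $\psi_a$; and from the chain rule $\tfrac{{\rm d}}{{\rm d}x}=\psi'(x)\tfrac{{\rm d}}{{\rm d}t}$ one reads off ${\delta^1_\psi}f(x)=\tfrac{{\rm d}}{{\rm d}t}\tilde f(t)$, hence ${\delta^n_\psi}f(x)=\tfrac{{\rm d}^n}{{\rm d}t^n}\tilde f(t)$. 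Combining these with Definition \ref{def_HD} shows that ${_\psi{\rm D}^{\mu}_{a,x}}f$ equals the classical left Riemann--Liouville derivative of $\tilde f$ read at $t=\psi(x)$; the right-sided operators transform the same way, based at $\psi_b$.

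With this dictionary the first identity is immediate. By Definition \ref{def_HD} and the semigroup property (Lemma \ref{lmm-prp-Int-SG1}),
\begin{equation*}
  {_\psi{\rm D}^{\mu}_{a,x}}\big({_\psi{\rm D}^{-\mu}_{a,x}}f\big)={\delta^n_\psi}\Big[{_\psi{\rm D}^{-(n-\mu)}_{a,x}}\big({_\psi{\rm D}^{-\mu}_{a,x}}f\big)\Big]={\delta^n_\psi}\big[{_\psi{\rm D}^{-n}_{a,x}}f\big],
\end{equation*}
so it remains to check that ${\delta^n_\psi}$ inverts the $n$-fold integer-order $\psi$-integral. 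For $n=1$ this is the fundamental theorem of calculus, ${\delta^1_\psi}\!\int_a^x\psi'(\tau)f(\tau)\,{\rm d}\tau=\tfrac{1}{\psi'(x)}\,\psi'(x)f(x)=f(x)$, with continuity of $f$ ensuring the primitive is differentiable; iterating gives the general $n$. (Equivalently, it is $\tfrac{{\rm d}^n}{{\rm d}t^n}{\rm I}^n\tilde f=\tilde f$ read through the transform.) The right-sided statement is identical.

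For the second identity I would write ${_\psi{\rm D}^{\mu}_{a,x}}f={\delta^n_\psi}\varphi$ with $\varphi:={_\psi{\rm D}^{-(n-\mu)}_{a,x}}f$, apply ${_\psi{\rm D}^{-\mu}_{a,x}}$, and invoke the $\psi$-analogue of the classical commutation rule for $n$-fold differentiation past a fractional integral,
\begin{equation*}
  {_\psi{\rm D}^{-\mu}_{a,x}}\big[{\delta^n_\psi}\varphi\big]={\delta^n_\psi}\big[{_\psi{\rm D}^{-\mu}_{a,x}}\varphi\big]-\sum_{k=0}^{n-1}\frac{({\delta^k_\psi}\varphi)(a)}{\Gamma(\mu-n+k+1)}\big(\psi(x)-\psi_a\big)^{\mu-n+k},
\end{equation*}
which is obtained by transporting its Riemann--Liouville counterpart through $t=\psi(x)$ (or directly by $n$ successive $\psi$-integrations by parts). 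By Lemma \ref{lmm-prp-Int-SG1} one has ${_\psi{\rm D}^{-\mu}_{a,x}}\varphi={_\psi{\rm D}^{-n}_{a,x}}f$, so the first term reduces to $f$ by the part already proved; and since $({\delta^k_\psi}\varphi)(a)=[{_\psi{\rm D}^{\mu-n+k}_{a,x}}f](a)$, the substitution $j=n-1-k$ turns the remaining sum into exactly $\sum_{k=0}^{n-1}[{_\psi{\rm D}^{\mu-k-1}_{a,x}}f](a)\,(\psi(x)-\psi_a)^{\mu-k-1}/\Gamma(\mu-k)$, as claimed. The right-sided formula follows by the symmetric computation based at $\psi_b$.

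The main obstacle is this second identity, and within it the handling of the boundary terms: one must verify that the limits $[{_\psi{\rm D}^{\mu-k-1}_{a,x}}f](a)$ exist and that ${\delta^n_\psi}$ may legitimately be commuted past the fractional integral, which is exactly where the hypothesis $f\in AC^n_\psi[a,b]$ is consumed. That hypothesis guarantees that $\varphi={_\psi{\rm D}^{-(n-\mu)}_{a,x}}f$ is regular enough that the quantities $({\delta^k_\psi}\varphi)(a)$ take finite values and the successive $\psi$-integrations by parts generating the boundary sum are justified. Everything else is a faithful translation of standard Riemann--Liouville facts through the diffeomorphism $\psi$, whose smoothness together with the positivity $\psi'>0$ ensures that the spaces $AC^n_\psi[a,b]$ and $AC^n[\psi_a,\psi_b]$ correspond under $t=\psi(x)$.
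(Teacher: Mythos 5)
The paper does not prove this lemma at all: it is stated as a preliminary and attributed to \cite{AlmMM18,FahFRS21}, so there is no in-paper argument to compare against. Judged on its own, your proof is correct and is essentially the standard derivation that those references rely on. The pull-back $t=\psi(x)$, $\tilde f=f\circ\psi^{-1}$ does turn ${_\psi{\rm D}^{\pm\mu}_{a,x}}$ into the classical left Riemann--Liouville operators of $\tilde f$ on $[\psi_a,\psi_b]$ and ${\delta^n_\psi}$ into ${\rm d}^n/{\rm d}t^n$; the first identity then follows exactly as you say from the semigroup property (Lemma \ref{lmm-prp-Int-SG1}) plus the fact that ${\delta^n_\psi}$ inverts ${_\psi{\rm D}^{-n}_{a,x}}$; and for the second identity your commutation rule for ${_\psi{\rm D}^{-\mu}_{a,x}}$ past ${\delta^n_\psi}$ is the correct $\psi$-analogue of the classical formula (I checked the $n=1$ integration by parts and the general index bookkeeping), with $({\delta^k_\psi}\varphi)(a)=[{_\psi{\rm D}^{\mu-n+k}_{a,x}}f](a)$ and the substitution $j=n-1-k$ producing exactly the stated sum $\sum_{k=0}^{n-1}[{_\psi{\rm D}^{\mu-k-1}_{a,x}}f](a)\,(\psi(x)-\psi_a)^{\mu-k-1}/\Gamma(\mu-k)$. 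You also correctly locate where $f\in AC^n_\psi[a,b]$ is consumed, namely in guaranteeing that $\varphi={_\psi{\rm D}^{-(n-\mu)}_{a,x}}f$ is regular enough for the boundary values to exist and the integrations by parts to be legitimate (strictly, the minimal classical hypothesis is regularity of $\varphi$ itself rather than of $f$, and $f\in AC^n_\psi$ is a sufficient condition for it, as you indicate). One incidental observation: in the paper's right-sided formula the boundary values are written as $[{_\psi{\rm D}^{\mu-k-1}_{x,b}}f](a)$, whereas your symmetric argument (correctly) produces evaluations at $b$; this appears to be a typo in the paper's statement rather than a defect of your proof.
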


\begin{lemma}\label{lmm-prp-Inv-C}\cite{LiL23} For $f\in C[a,b]$ and $\mu>0$, there hold
$$  {_{C\psi}{\rm D}^{\mu}_{a,x}}\left({_\psi{\rm D}^{-\mu}_{a,x}}f(x)\right)=f(x),\quad
{_{C\psi}{\rm D}^{\mu}_{x,b}}\left({_\psi{\rm D}^{-\mu}_{x,b}}f(x)\right)=f(x),$$
and for $f\in AC_\psi^n[a,b]$,
\begin{equation*}
  \begin{split}
  {_\psi{\rm D}^{-\mu}_{a,x}}\left({_{C\psi}{\rm D}^{\mu}_{a,x}}f(x)\right) =&
    f(x)
-\sum_{k=0}^{n-1}\frac{[{\delta}^{k}_{\psi}f](a)}{k!}
   \left(\psi(x)-\psi_a\right)^{k},\\
   {_\psi{\rm D}^{-\mu}_{x,b}}\left({_{C\psi}{\rm D}^{\mu}_{x,b}}f(x)\right) =&
    f(x)
-\sum_{k=0}^{n-1}\frac{[(-1)^k{\delta}^{k}_{\psi}f](a)}{k!}
   \left(\psi_b-\psi(x)\right)^{k}.
  \end{split}
\end{equation*}
\end{lemma}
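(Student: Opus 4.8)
The plan is to treat the two left-sided identities (the right-sided ones follow by the mirror-image argument, replacing $\psi(x)-\psi_a$ by $\psi_b-\psi(x)$ and carrying the factor $(-1)^n$ from Definition \ref{def_CHD}), reducing everything to the semigroup law of Lemma \ref{lmm-prp-Int-SG1} together with the fundamental theorem of calculus written in the $\psi$-variable. The unifying device is that the Caputo derivative is, by Definition \ref{def_CHD}, a single fractional integral applied to the integer-order operator $\delta^n_\psi f$, so composing it with $\psi$-integrals produces only compositions of integrals, which behave well.

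For the second identity I would start from ${_{C\psi}{\rm D}^{\mu}_{a,x}}f = {_\psi{\rm D}^{-(n-\mu)}_{a,x}}[\delta^n_\psi f]$, apply ${_\psi{\rm D}^{-\mu}_{a,x}}$, and use the semigroup property (Lemma \ref{lmm-prp-Int-SG1}) with orders $\mu$ and $n-\mu$ to collapse the two fractional integrals into one integer-order integral:
\[
{_\psi{\rm D}^{-\mu}_{a,x}}\big({_{C\psi}{\rm D}^{\mu}_{a,x}}f\big) = {_\psi{\rm D}^{-n}_{a,x}}[\delta^n_\psi f].
\]
It then remains to establish the $\psi$-analogue of Taylor's theorem, ${_\psi{\rm D}^{-n}_{a,x}}[\delta^n_\psi f] = f - \sum_{k=0}^{n-1}\frac{\delta^k_\psi f(a)}{k!}(\psi(x)-\psi_a)^k$. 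The cleanest route is the substitution $s=\psi(x)$: by \eqref{tranf} the operator $\delta^1_\psi=\frac{1}{\psi'}\frac{\rm d}{{\rm d}x}$ becomes ordinary differentiation $\frac{\rm d}{{\rm d}s}$ and ${_\psi{\rm D}^{-n}_{a,x}}$ becomes the classical $n$-fold integral in $s$, so the claimed identity is exactly Taylor's formula with integral remainder for $s\mapsto f(\psi^{-1}(s))$ expanded at $s=\psi_a$.

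For the first identity I would set $g:={_\psi{\rm D}^{-\mu}_{a,x}}f$ and compute $\delta^n_\psi g$. Writing $\mu=(n-1)+(\mu-n+1)$ with $\mu-n+1\in(0,1)$ and splitting ${_\psi{\rm D}^{-\mu}_{a,x}}={_\psi{\rm D}^{-(n-1)}_{a,x}}{_\psi{\rm D}^{-(\mu-n+1)}_{a,x}}$ via Lemma \ref{lmm-prp-Int-SG1}, the $n-1$ integer integrations are undone by $\delta^{n-1}_\psi$, leaving $\delta^n_\psi g=\delta^1_\psi[{_\psi{\rm D}^{-(\mu-n+1)}_{a,x}}f]={_\psi{\rm D}^{n-\mu}_{a,x}}f$, where the last equality is Definition \ref{def_HD} at order $n-\mu\in(0,1)$. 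Substituting back gives ${_{C\psi}{\rm D}^{\mu}_{a,x}}({_\psi{\rm D}^{-\mu}_{a,x}}f)={_\psi{\rm D}^{-(n-\mu)}_{a,x}}({_\psi{\rm D}^{n-\mu}_{a,x}}f)$, to which the second inverse relation of Lemma \ref{lmm-prp-Inv-RL} (at order $n-\mu$, whose ceiling is $1$) applies; the single boundary term it produces equals $\frac{[{_\psi{\rm D}^{-(\mu-n+1)}_{a,x}}f](a)}{\Gamma(n-\mu)}(\psi(x)-\psi_a)^{n-\mu-1}$, which vanishes because a fractional integral of positive order of a continuous function is zero at the left endpoint $x=a$.

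The main obstacle, and the place where the hypotheses $f\in C[a,b]$ and $f\in AC^n_\psi[a,b]$ are genuinely needed, is justifying that the boundary quantities vanish and that the intermediate operators are well-defined: specifically that $g={_\psi{\rm D}^{-\mu}_{a,x}}f$ is regular enough for $\delta^n_\psi g$ and the evaluations $[{_\psi{\rm D}^{-(\mu-k)}_{a,x}}f](a)$ to make sense and equal zero, and that the remainder in the $\psi$-Taylor expansion is controlled by $\delta^n_\psi f\in AC[a,b]$. I expect the endpoint-vanishing estimate --- bounding $\lvert{_\psi{\rm D}^{-\sigma}_{a,x}}f\rvert\le \|f\|_\infty (\psi(x)-\psi_a)^\sigma/\Gamma(\sigma+1)$ through Lemma \ref{lmm-prp-Int-FP1} with $\gamma=0$ --- to be the one routine but essential ingredient that makes every correction term disappear.
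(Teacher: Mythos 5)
The paper gives no proof of this lemma; it is quoted directly from \cite{LiL23}, so there is no internal argument to compare yours against, and your proposal must be judged on its own terms. Judged so, it is essentially the standard argument and is sound. The second identity is handled exactly right: Definition \ref{def_CHD} plus the semigroup property of Lemma \ref{lmm-prp-Int-SG1} collapse the composition to ${_\psi{\rm D}^{-n}_{a,x}}[\delta^n_\psi f]$, and the substitution $s=\psi(x)$ turns the remaining claim into classical Taylor's formula with integral remainder for $F=f\circ\psi^{-1}$, legitimate under $f\in AC^n_\psi[a,b]$. For the first identity, your reduction $\delta^n_\psi({_\psi{\rm D}^{-\mu}_{a,x}}f)={_\psi{\rm D}^{n-\mu}_{a,x}}f$ and the endpoint-vanishing bound $\lvert{_\psi{\rm D}^{-\sigma}_{a,x}}f\rvert\le \|f\|_\infty(\psi(x)-\psi_a)^\sigma/\Gamma(\sigma+1)$ are both correct. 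The one soft spot is the final appeal to the second relation of Lemma \ref{lmm-prp-Inv-RL} at order $n-\mu$: as stated in the paper that relation presupposes its argument lies in $AC^1_\psi[a,b]$, whereas here $f$ is only assumed continuous (indeed, for merely continuous $f$ the pointwise existence of $\delta^n_\psi({_\psi{\rm D}^{-\mu}_{a,x}}f)$ is already delicate, a defect inherited from the lemma's statement rather than introduced by you). This can be repaired either by verifying that ${_\psi{\rm D}^{-(\mu-n+1)}_{a,x}}f\in AC[a,b]$ whenever the left-hand side of the claimed identity exists at all, or, more cleanly, by routing through Lemma \ref{lmm-link-C2H}: write ${_{C\psi}{\rm D}^{\mu}_{a,x}}g={_\psi{\rm D}^{\mu}_{a,x}}\bigl[g-\sum_{k=0}^{n-1}\frac{\delta^k_\psi g(a)}{k!}(\psi(x)-\psi_a)^k\bigr]$ with $g={_\psi{\rm D}^{-\mu}_{a,x}}f$, observe that every coefficient $\delta^k_\psi g(a)=[{_\psi{\rm D}^{-(\mu-k)}_{a,x}}f](a)$ vanishes by your endpoint estimate since $\mu-k>0$, and finish with the first relation of Lemma \ref{lmm-prp-Inv-RL}, which requires only $f\in C[a,b]$.
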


For some special cases of $\psi$, we can obtain different fractional derivatives.
For example, if $\psi(x)=x$, then Definitions \ref{def_HD} and \ref{def_CHD} reduce to the Riemann-Liouville and Caputo fractional derivatives of order $\mu$, respectively
(see \cite{KilST06}). In the case $\psi(x)=\log(x)$, Definitions \ref{def_HD} and \ref{def_CHD} reduce to the Hadamard and Caputo-Hadamard fractional derivatives, respectively (see \cite{FahFRS21}).

\subsection{Problems and well-posedness}\label{subsec:2.2}
The ordinary differential equations of the initial value problem (IVP) and boundary value problem (BVP) play a fundamental role in theory as well as  applications.
In this work, we concern about the following four kinds of problems listed as
\begin{enumerate}
  \setlength{\itemindent}{1.5em}
  \item [(P1)] The initial value problem with $\psi$-derivative:
\begin{equation}\label{IVP-RL-P1}
\left\{
  \begin{array}{ll}
    {_\psi{\rm D}^{\mu}_{a,x}} u(x)=f(x,u), \quad x\in(a,b], \quad n-1<\mu<n, \\
    \left[{_\psi{\rm D}^{k-(n-\mu)}_{a,x}} u\right](a)=g_{k}, \quad k=0,1,\cdots,n-1.
  \end{array}
\right.
\end{equation}
  \item [(P2)] The initial value problem with $\psi$-Caputo derivative:
\begin{equation}\label{IVP-C-P2}
  \left\{
    \begin{array}{ll}
      {_{C\psi}{\rm D}^{\mu}_{a,x}} u(x)=f(x,u), \quad x\in(a,b], \quad n-1<\mu<n,  \\
      \left[{\delta}_\psi^{k}u\right](a)=c_k, \quad k=0,1,\cdots,n-1.
    \end{array}
  \right.
\end{equation}
  \item [(P3)] The boundary value problem with $\psi$-derivative:
\begin{equation}\label{BVP-RL-P3}
  \left\{
    \begin{array}{ll}
      {_\psi{\rm D}_{a,x}^{\mu}} u(x)=f(x,u),\quad x\in(a,b),\quad 1<\mu<2,\\
      \left[{_\psi{\rm D}_{a,x}^{-(2-\mu)}}u\right](a)=
\left[{_\psi{\rm D}_{a,x}^{-(2-\mu)}}u\right](b)=0.
    \end{array}
  \right.
\end{equation}
  \item [(P4)] The boundary value problem with $\psi$-Caputo derivative:
\begin{equation}\label{BVP-C-P4}
  \left\{
    \begin{array}{ll}
      {_{C\psi}{\rm D}_{a,x}^{\mu}} u(x)=f(x,u),\quad x\in(a,b),\quad 1<\mu<2,
       \\
      u(a)=u(b)=0.
    \end{array}
  \right.
\end{equation}
\end{enumerate}
Although problems we considered here only involve the left-sided type differential operators, most of our results are also valid for the right-sided type ones.

The existence and uniqueness of the above-mentioned problems is derived in two steps: (1) The problems are converted into four equivalent integral equations. (2) Each equivalent integral equation is proved to possess a unique solution by using the fixed point theorem. Some hypotheses on $f$ are always indispensable, which are corresponding to various kinds of problems.
\begin{enumerate}
  \item [(H1)]Let $h^*>a, K>0,$ and
$$G:=\left\{(x,y): a<x\leq h^*, \left|(\psi(x)-\psi_a)^{n-\mu}y-\sum_{k=0}^{n-1}\frac{(\psi(x)-\psi_a)^{k} g_{k}}{\Gamma(\mu-n+k+1)}\right|< K ~\mbox{or}~ x=a,y\in\mathbb{R}\right\}.$$
Assume that $f:G\rightarrow\mathbb{R}$ is continuous and bounded in $G$ with $M:=\sup_{(x,z)\in G}|f(x,z)|$. Moreover, there exists a constant $C_L>0$ such that
$$|f(x,y_1)-f(x,y_2)|< C_L|y_1-y_2|,\quad \forall  (x,y_1),(x,y_2) \in G.$$
Let $\widetilde{h}$ being an arbitrary positive number satisfying the constraint
\begin{equation*}
  a<\widetilde{h}<\min\left\{\psi^{-1}\left(\left(
\frac{\Gamma(2\mu-n+1)}{\Gamma(\mu-n+1)C_L}\right)^{\frac{1}{\mu}}
+\psi_a\right),\psi^{-1}\left(\left(\frac{\Gamma(\mu+1)K}{M}\right)^{1/n}+\psi_a
\right)\right\},
\end{equation*}
and $h:=\min\{h^*,\widetilde{h}\}.$
  \item [(H2)]Let $h^*>a, K>0,$ and
$$G:=\left\{(x,y): a\leq x\leq h^*, \left|y-\sum_{k=0}^{n-1}\frac{(\psi(x)-\psi_a)^{k} c_{k}}{k!}\right|< K \right\}.$$
Assume that $f:G\rightarrow\mathbb{R}$ is continuous and bounded in $G$ with $M:=\sup_{(x,z)\in G}|f(x,z)|$. Moreover, there exists a constant $C_L>0$ such that
$$|f(x,y_1)-f(x,y_2)|< C_L|y_1-y_2|,\quad \forall  (x,y_1),(x,y_2) \in G.$$
Let $\widetilde{h}$ being an arbitrary positive number satisfying the constraint
\begin{equation*}
  a<\widetilde{h}<\min\left\{\psi^{-1}\left(\left(\frac{\Gamma(\mu+1)}{C_L}
\right)^{\frac{1}{\mu}}+\psi_a\right),\psi^{-1}\left(\left(\frac{\Gamma(\mu+1)K}{M}
\right)^{\frac{1}{\mu}}+\psi_a\right)\right\},
\end{equation*}
and $h:=\min\{h^*,\widetilde{h}\}.$
  \item [(H3)]Assume that $f:[a,b]\times\mathbb{R}\rightarrow\mathbb{R}$ is continuous and satisfies a Lipschitz condition with Lipschitz constant $C_L$ with respect to the second variable
$$|f(x,y_1)-f(x,y_2)|\leq C_L|y_1-y_2|,\quad \forall  (x,y_1),(x,y_2) \in [a,b]\times\mathbb{R},$$
where
$$C_L<\frac{\kappa^\mu \Gamma(\mu+1)}{2^{\mu-1}(2+\mu)}.$$
  \item [(H4)]Assume that $f:[a,b]\times\mathbb{R}\rightarrow\mathbb{R}$ is continuous and satisfies a Lipschitz condition with Lipschitz constant $C_L$ with respect to the second variable
$$|f(x,y_1)-f(x,y_2)|\leq C_L|y_1-y_2|,\quad \forall  (x,y_1),(x,y_2) \in [a,b]\times\mathbb{R},$$
where
$$C_L<\frac{\kappa^\mu \Gamma(\mu+1)}{2^{\mu+1}}.$$
\end{enumerate}

\begin{theorem}\label{thm-4-equival-inte}The following statements are ture:
\begin{enumerate}
  \item[(i)] Assume the hypothesis (H1) holds. The function $u(x)\in C(a,h]$ solves the IVP (P1) in (\ref{IVP-RL-P1})
if and only if $u(x)$ is the solution of the Volterra integral equation of the second kind
\begin{equation}\label{eq-Vol-int-eq1-P1}
  u(x)=\sum_{k=0}^{n-1}g_k\frac{(\psi(x)-\psi_a)^{\mu-n+k}}{\Gamma(\mu-n+k+1)}
     + \frac{1}{\Gamma{(\mu)}}\int_a^x\psi'(\tau)(\psi(x)-\psi(\tau))^{\mu-1}f(\tau,u(\tau)){\rm d}\tau.
\end{equation}
  \item [(ii)]Assume the hypothesis (H2) holds. The function $u(x)\in C[a,h]$ solves the IVP (P2) in (\ref{IVP-C-P2})
if and only if $u(x)$ is the solution of the integral equation
\begin{equation}\label{eq-Vol-int-eq2-P2}
  u(x)=\sum_{k=0}^{n-1}c_k\frac{(\psi(x)-\psi_a)^{k}}{k!}
     + \frac{1}{\Gamma{(\mu)}}\int_a^x\psi'(\tau)(\psi(x)-\psi(\tau))^{\mu-1}f(\tau,u(\tau)){\rm d}\tau.
\end{equation}
  \item [(iii)]Assume the hypothesis (H3) holds. The function $u(x)\in C[a,b]$ solves the BVP (P3) in (\ref{BVP-RL-P3})
if and only if $u(x)$ is the solution of the Fredholm integral equation
\begin{equation}\label{eq-Fhm-int-eq3-P3}
\begin{split}
  u(x) =&-\frac{\kappa(\psi(x)-\psi_a)^{\mu-1}}{2\Gamma(\mu)}
\int_{a}^{b} \psi'(\tau)(\psi_b-\psi(\tau))f(\tau,u(\tau)){\rm d}\tau\\
     & + \frac{1}{\Gamma{(\mu)}}\int_a^x\psi'(\tau)(\psi(x)-\psi(\tau))^{\mu-1}f(\tau,u(\tau)){\rm d}\tau.
\end{split}
\end{equation}
  \item [(iv)]Assume the hypothesis (H4) holds.  Then $u(x)\in C[a,b]$ is the solution of BVP (P4) in (\ref{BVP-C-P4}) if and only if $u(x)$ is the solution of the Fredholm integral equation
\begin{equation}\label{eq-Fhm-int-eq4-P4}
\begin{split}
  u(x) =&-\frac{\kappa(\psi(x)-\psi_a)}{2\Gamma(\mu)}
\int_{a}^{b} \psi'(\tau)(\psi_b-\psi(\tau))^{\mu-1}f(\tau,u(\tau)){\rm d}\tau\\
     & + \frac{1}{\Gamma{(\mu)}}\int_a^x\psi'(\tau)(\psi(x)-\psi(\tau))^{\mu-1}f(\tau,u(\tau)){\rm d}\tau.
\end{split}
\end{equation}
\end{enumerate}
\end{theorem}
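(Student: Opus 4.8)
The plan is to prove each of the four equivalences by the standard two-step recipe of fractional ODE theory: first apply the appropriate $\psi$-fractional integral to the differential equation to obtain a representation of $u$ up to a finite-dimensional kernel, and then fix the free constants from the prescribed initial or boundary data. The reverse implication in each case is obtained by applying the corresponding $\psi$-fractional derivative to the integral equation, recovering the differential equation through the inverse relations of Lemmas \ref{lmm-prp-Inv-RL} and \ref{lmm-prp-Inv-C}, and checking the side conditions directly with the power-rule Lemma \ref{lmm-prp-Int-FP1}.

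For (i), I would apply ${_\psi{\rm D}^{-\mu}_{a,x}}$ to both sides of (\ref{IVP-RL-P1}) and invoke the second identity of Lemma \ref{lmm-prp-Inv-RL}, which gives
\[
u(x)={_\psi{\rm D}^{-\mu}_{a,x}}f(x,u)+\sum_{k=0}^{n-1}\frac{[{_\psi{\rm D}^{\mu-k-1}_{a,x}}u](a)}{\Gamma(\mu-k)}\left(\psi(x)-\psi_a\right)^{\mu-k-1}.
\]
Reindexing by $j=n-1-k$ matches the exponent $\mu-j-1$ with $\mu-n+k$, and since $k-(n-\mu)=\mu-n+k$ the initial conditions read $[{_\psi{\rm D}^{\mu-n+k}_{a,x}}u](a)=g_k$; substituting these reproduces (\ref{eq-Vol-int-eq1-P1}) once the integral term is written out with Definition \ref{def_psiI}. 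Case (ii) is parallel but uses the second identity of Lemma \ref{lmm-prp-Inv-C}, where the Caputo data $[\delta_\psi^k u](a)=c_k$ enter the sum $\sum_{k=0}^{n-1}\frac{c_k}{k!}(\psi(x)-\psi_a)^k$ verbatim, giving (\ref{eq-Vol-int-eq2-P2}). In both cases the converse follows by applying ${_\psi{\rm D}^{\mu}_{a,x}}$ (resp. ${_{C\psi}{\rm D}^{\mu}_{a,x}}$): the kernel power terms are annihilated, the first inversion identity recovers $f$, and the side conditions are verified via Lemma \ref{lmm-prp-Int-FP1}.

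For the boundary value problems (iii) and (iv) we have $n=2$, so the representation carries exactly two free constants. In (iii), Lemma \ref{lmm-prp-Inv-RL} gives $u(x)={_\psi{\rm D}^{-\mu}_{a,x}}f+c_1(\psi(x)-\psi_a)^{\mu-1}+c_2(\psi(x)-\psi_a)^{\mu-2}$. Applying ${_\psi{\rm D}^{-(2-\mu)}_{a,x}}$ and evaluating at $a$, the first two summands vanish there (by the semigroup property, Lemma \ref{lmm-prp-Int-SG1}, and Lemma \ref{lmm-prp-Int-FP1}), while the last yields the nonzero constant $c_2\Gamma(\mu-1)$; hence the condition at $a$ forces $c_2=0$. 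For the condition at $b$ I would again apply ${_\psi{\rm D}^{-(2-\mu)}_{a,x}}$: the semigroup property turns the integral term into ${_\psi{\rm D}^{-2}_{a,x}}f=\int_a^x\psi'(\tau)(\psi(x)-\psi(\tau))f\,{\rm d}\tau$, and Lemma \ref{lmm-prp-Int-FP1} gives ${_\psi{\rm D}^{-(2-\mu)}_{a,x}}(\psi(x)-\psi_a)^{\mu-1}=\Gamma(\mu)(\psi(x)-\psi_a)$. Setting the value at $b$ to zero and using $\kappa=2/(\psi_b-\psi_a)$ then determines $c_1$ and yields (\ref{eq-Fhm-int-eq3-P3}). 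Case (iv) is analogous with the Caputo representation $u(x)={_\psi{\rm D}^{-\mu}_{a,x}}f+c_0+c_1(\psi(x)-\psi_a)$: the condition $u(a)=0$ forces $c_0=0$ (the integral term vanishes at $a$), and $u(b)=0$ together with ${_\psi{\rm D}^{-\mu}_{a,x}}f\big|_{x=b}=\frac{1}{\Gamma(\mu)}\int_a^b\psi'(\tau)(\psi_b-\psi(\tau))^{\mu-1}f\,{\rm d}\tau$ fixes $c_1$, giving (\ref{eq-Fhm-int-eq4-P4}).

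The main obstacle I anticipate is not the algebra of matching constants but the regularity bookkeeping that licenses the inversion lemmas together with the verification of the side conditions in the converse directions. Applying the inverse identities of Lemmas \ref{lmm-prp-Inv-RL}--\ref{lmm-prp-Inv-C} presupposes $u\in AC_\psi^n[a,b]$ (or $f\in C$), so one must argue that a merely continuous solution of each integral equation actually lies in the required smoothness class; this is precisely what the hypotheses (H1)--(H4) and the mapped structure are designed to secure. The most delicate point is checking, for (i), that after applying ${_\psi{\rm D}^{k-(n-\mu)}_{a,x}}$ to (\ref{eq-Vol-int-eq1-P1}) the Volterra term and the ``wrong'' power terms either vanish or collapse to the correct value as $x\to a^+$, so that exactly $g_k$ survives; this endpoint analysis of the weakly singular integral is where the care is concentrated.
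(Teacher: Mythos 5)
Your proposal is correct and follows essentially the same route as the paper's proof: apply the appropriate $\psi$-fractional integral and the inversion identities of Lemmas \ref{lmm-prp-Inv-RL} and \ref{lmm-prp-Inv-C} to get the representation, fix the constants from the initial or boundary data (using Lemmas \ref{lmm-prp-Int-SG1} and \ref{lmm-prp-Int-FP1} to evaluate the kernel terms, with the condition at $a$ killing the $(\psi(x)-\psi_a)^{\mu-2}$ resp.\ constant term and the condition at $b$ determining the remaining coefficient via $\kappa=2/(\psi_b-\psi_a)$), and recover the converse by applying the fractional derivative and checking the side conditions. The only cosmetic difference is that for (iii) and (iv) the paper reuses parts (i) and (ii) to start from a one-constant representation rather than writing out both free constants, and the paper is no more careful than you are about the regularity bookkeeping you flag.
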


\begin{proof}~(i) If $u$ is a continuous solution of the IVP (\ref{IVP-RL-P1}) then we define $z(x):=f(x,u(x))$. By assumption, $z$ is a continuous function and $z(x)={_\psi{\rm D}_{a,x}^{\mu}}u(x)=\delta_\psi^n
[{_\psi{\rm D}_{a,x}^{-(n-\mu)}}]u(x)$. Thus, ${_\psi{\rm D}_{a,x}^{-(n-\mu)}}u(x)\in C^n(a,h]$. Performing the integral operator ${_\psi{\rm D}_{a,x}^{-\mu}}$ on both sides of the first equation of (\ref{IVP-RL-P1}), making use of the initial values and Lemma \ref{lmm-prp-Inv-RL}, we have the integral equation (\ref{eq-Vol-int-eq1-P1}). Note that for $k< n$,
\begin{equation*}
\begin{split}
  {_\psi{\rm D}_{a,x}^{\mu}}(\psi(x)-\psi_a)^{\mu-n+k}&={\delta}_\psi^{n}\left[
{_\psi{\rm D}_{a,x}^{-(n-\mu)}}(\psi(x)-\psi_a)^{\mu-n+k}\right]\\
&=\frac{\Gamma(\mu-n+k+1)}{\Gamma(k+1)}{\delta}_\psi^{n}
(\psi(x)-\psi_a)^{k}=0.
\end{split}
\end{equation*}
Then, perform the derivative operator ${_\psi{\rm D}_{a,x}^{\mu}}$ on both sides of
the equation (\ref{eq-Vol-int-eq1-P1}) to obtain
\begin{equation*}
  {_\psi{\rm D}_{a,x}^{\mu}}u(x)={_\psi{\rm D}_{a,x}^{\mu}}\left[
{_\psi{\rm D}_{a,x}^{-\mu}}f(x,u(x))\right]=f(x,u).
\end{equation*}
We next need to verify the initial value conditions.
For $m=0,1,\cdots,n-1$, performing ${_\psi{\rm D}_{a,x}^{\mu-n+m}}$ on both sides of
the equation (\ref{eq-Vol-int-eq1-P1}) and letting $x$ approach to $a$, one has
\begin{equation*}
  \left[{_\psi{\rm D}_{a,x}^{\mu-n+m}}u\right](a)=g_{m}.
\end{equation*}
Then statement (i) is proved.

(ii) If $u$ is a continuous solution of the IVP (\ref{IVP-C-P2}) then we define $z(x):=f(x,u(x))$, and $z$ is a continuous function. Performing the operator ${_\psi{\rm D}_{a,x}^{-\mu}}$ on both sides of the first equation of (\ref{IVP-C-P2}), making use of the initial values and Lemma \ref{lmm-prp-Inv-C}, we have the integral equation (\ref{eq-Vol-int-eq2-P2}).

An application of the differential operator ${_\psi{\rm D}_{a,x}^{\mu}}$ to both sides of (\ref{eq-Vol-int-eq2-P2}) yields the first equation of
(\ref{IVP-C-P2}). The initial value conditions are also fulfilled.

(iii) Performing the integral operator ${_\psi{\rm D}_{a,x}^{-(2-\mu)}}$ on both sides of (\ref{eq-Fhm-int-eq3-P3}) yields
\begin{equation*}
\begin{split}
  {_\psi{\rm D}_{a,x}^{-(2-\mu)}} u(x) =&-\frac{\kappa(\psi(x)-\psi_a)}{2}
\int_{a}^{b} \psi'(\tau)(\psi_b-\psi(\tau))f(\tau,u(\tau)){\rm d}\tau\\
     & + \int_a^x\psi'(\tau)(\psi(x)-\psi(\tau))f(\tau,u(\tau)){\rm d}\tau.
\end{split}
\end{equation*}
Then we have $\left[{_\psi{\rm D}_{a,x}^{-(2-\mu)}} u\right](a)=
\left[{_\psi{\rm D}_{a,x}^{-(2-\mu)}} u\right](b)=0$.
Moreover, an application of the differential operator ${_\psi{\rm D}_{a,x}^{\mu}}$ to both sides of (\ref{eq-Fhm-int-eq3-P3}) yields
(\ref{BVP-RL-P3}). Thus, $u$ in (\ref{eq-Fhm-int-eq3-P3}) solves the boundary value problem (\ref{BVP-RL-P3}).

On the other hand,  if $u$ solves the differential equation (\ref{BVP-RL-P3}) with the condition $\left[{_\psi{\rm D}_{a,x}^{-(2-\mu)}} u\right](a)=0$, then we know from (i) in Theorem \ref{thm-4-equival-inte} that it also satisfies
$$u(x)=c(\psi(x)-\psi_a)^{\mu-1}+\frac{1}{\Gamma{(\mu)}}\int_a^x\psi'(\tau)
(\psi(x)-\psi(\tau))^{\mu-1}f(\tau,u(\tau)){\rm d}\tau$$
with some undetermined constant $c$. We take $x=b$ and utilize $\left[{_\psi{\rm D}_{a,x}^{-(2-\mu)}} u\right](b)=0$ to work out $$c=-\frac{\kappa}{2\Gamma(\mu)}\int_{a}^{b}\psi'(\tau)
(\psi(b)-\psi(\tau))f(\tau,u(\tau)){\rm d}\tau.$$
Then we obtain the Fredholm integral equation (\ref{eq-Fhm-int-eq3-P3}).

(iv) If $u$ is continuous and solves the Fredholm integral equation (\ref{eq-Fhm-int-eq4-P4}), it is clear the $u(a)=u(b)=0$ because of $\kappa=2/(\psi_b-\psi_a)$. An application of the differential operator ${_\psi{\rm D}_{a,x}^{\mu}}$ to both sides of (\ref{eq-Fhm-int-eq4-P4}) yields the first equation of (\ref{BVP-C-P4}).

Similarly, if $u$ solves the differential equation (\ref{BVP-C-P4}) with the condition $u(a)=0$, then we know from (ii) in Theorem \ref{thm-4-equival-inte} that it also satisfies
$$u(x)=d(\psi(x)-\psi_a)+\frac{1}{\Gamma{(\mu)}}\int_a^x\psi'(\tau)
(\psi(x)-\psi(\tau))^{\mu-1}f(\tau,u(\tau)){\rm d}\tau$$
with some undetermined constant $d$. We take $x=b$ and utilize $u(b)=0$ to work out $$d=-\frac{\kappa}{2\Gamma(\mu)}\int_{a}^{b}\psi'(\tau)
(\psi(b)-\psi(\tau))^{\mu-1}f(\tau,u(\tau)){\rm d}\tau.$$
Then we obtain the Fredholm integral equation (\ref{eq-Fhm-int-eq4-P4}).
\end{proof}

\begin{theorem}\label{thm-exis-ine-4}The following statements are ture:
\begin{enumerate}
  \item [(i)]Under the hypothesis (H1), the equation (\ref{eq-Vol-int-eq1-P1}) possesses a uniquely solution $u\in C(a,h]$.
  \item [(ii)]Under the hypothesis (H2), the equation (\ref{eq-Vol-int-eq2-P2}) possesses a uniquely solution $u\in C[a,h]$.
  \item [(iii)]Under the hypothesis (H3), the equation (\ref{eq-Fhm-int-eq3-P3}) possesses a uniquely solution $u\in C[a,b]$.
  \item [(iv)]Under the hypothesis (H4), the equation (\ref{eq-Fhm-int-eq4-P4}) possesses a uniquely solution $u\in C[a,b]$.
\end{enumerate}
\end{theorem}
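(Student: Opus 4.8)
The plan is to prove all four statements with the Banach fixed-point theorem: in each case I would define an operator $T$ whose fixed points are exactly the solutions of the corresponding integral equation, and then exhibit a complete metric space on which $T$ is a contraction. The hypotheses (H1)--(H4) have been calibrated so that the resulting contraction constants fall strictly below $1$; accordingly the real content is choosing the correct space in each case and evaluating the kernel integrals via the substitution $s=\psi(\tau)$, which turns them into elementary or Beta-function integrals.

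For the two Fredholm problems (iii) and (iv) I would take $C[a,b]$ with the supremum norm and let $T$ be the right-hand side of (\ref{eq-Fhm-int-eq3-P3}) and (\ref{eq-Fhm-int-eq4-P4}), respectively. Since the target space is the full $C[a,b]$, no invariance argument is needed and I only have to verify contraction. For $u_1,u_2\in C[a,b]$ the Lipschitz hypothesis gives $|f(\tau,u_1)-f(\tau,u_2)|\le C_L\|u_1-u_2\|_\infty$, so it remains to bound the kernels using $\int_a^b\psi'(\tau)(\psi_b-\psi(\tau))^{\mu-1}{\rm d}\tau=(\psi_b-\psi_a)^\mu/\mu$, $\int_a^x\psi'(\tau)(\psi(x)-\psi(\tau))^{\mu-1}{\rm d}\tau=(\psi(x)-\psi_a)^\mu/\mu$, and $\psi(x)-\psi_a\le\psi_b-\psi_a=2/\kappa$. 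Collecting terms yields the contraction constant $2^{\mu+1}C_L/(\kappa^\mu\Gamma(\mu+1))$ for (iv) and $2^{\mu-1}(\mu+2)C_L/(\kappa^\mu\Gamma(\mu+1))$ for (iii), which are $<1$ precisely under (H4) and (H3).

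For the Caputo IVP (ii) I would again use $C[a,h]$, but now restricted to the closed ball $U=\{u\in C[a,h]:\|u-u_0\|_\infty\le K\}$ about the polynomial part $u_0(x)=\sum_{k=0}^{n-1}c_k(\psi(x)-\psi_a)^k/k!$; this ball is exactly the constraint cutting out $G$ in (H2), so the bound $|f|\le M$ is available. Using $|f|\le M$ one checks $\|Tu-u_0\|_\infty\le M(\psi(h)-\psi_a)^\mu/\Gamma(\mu+1)\le K$, so $T$ maps $U$ into itself, while the Lipschitz bound gives the contraction factor $C_L(\psi(h)-\psi_a)^\mu/\Gamma(\mu+1)<1$. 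The two inequalities hold exactly because of the two terms entering the minimum defining $\widetilde h$ in (H2).

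The genuinely delicate case is the Riemann--Liouville IVP (i), and this is where I expect the main obstacle. Here the solution is singular at $x=a$, behaving like $(\psi(x)-\psi_a)^{\mu-n}$, so the sup norm is useless and one must pass to the weighted space $C_\varpi:=\{u\in C(a,h]:\varpi u\in C[a,h]\}$ with weight $\varpi(x)=(\psi(x)-\psi_a)^{n-\mu}$ and norm $\|u\|_{C_\varpi}=\|\varpi u\|_\infty$, working in the closed ball about $u_0(x)=\sum_{k=0}^{n-1}g_k(\psi(x)-\psi_a)^{\mu-n+k}/\Gamma(\mu-n+k+1)$; multiplying by $\varpi$ reproduces exactly the quantity defining $G$ in (H1). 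I would first check that $T$ sends $C_\varpi$ into itself, the only subtle point being continuity of $\varpi\cdot Tu$ at $x=a$, which holds because the integral term, weighted by $\varpi$, carries a factor $(\psi(x)-\psi_a)^n\to0$; invariance of the ball follows from $M(\psi(h)-\psi_a)^n/\Gamma(\mu+1)\le K$. The contraction estimate is the crux: inserting the Lipschitz bound together with $|u_1(\tau)-u_2(\tau)|\le(\psi(\tau)-\psi_a)^{\mu-n}\|u_1-u_2\|_{C_\varpi}$ leads to the weighted kernel integral $\int_a^x\psi'(\tau)(\psi(x)-\psi(\tau))^{\mu-1}(\psi(\tau)-\psi_a)^{\mu-n}{\rm d}\tau$, which under $s=\psi(\tau)$ becomes a Beta integral equal to $(\psi(x)-\psi_a)^{2\mu-n}\Gamma(\mu-n+1)\Gamma(\mu)/\Gamma(2\mu-n+1)$, convergent since $\mu>n-1$. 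After multiplying by $\varpi(x)$ the powers collapse to $(\psi(x)-\psi_a)^\mu$, giving the contraction constant $(\psi(h)-\psi_a)^\mu C_L\Gamma(\mu-n+1)/\Gamma(2\mu-n+1)<1$, which is exactly the first constraint on $\widetilde h$ in (H1). With the contraction verified in each of the four complete metric spaces, the Banach fixed-point theorem yields the unique fixed point and hence the unique solution in each case.
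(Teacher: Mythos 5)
Your proposal is correct and follows essentially the same route as the paper: in each case it builds the same integral operator, uses the same ambient space (the sup norm on $C[a,b]$ or $C[a,h]$ for (ii)--(iv), and the weighted norm $\sup_{a<x\le h}|(\psi(x)-\psi_a)^{n-\mu}v(x)|$ for (i)), verifies invariance of the same ball via the bound $M$, and arrives at exactly the contraction constants that (H1)--(H4) force below one. The only cosmetic difference is that the paper states the contraction estimate for all iterates $\mathrm{A}^j$ by induction, whereas you apply Banach's theorem after a single application, which is equivalent here.
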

\begin{proof} (i) We define the set
$$B:=\left\{v\in C(a,h]: \sup_{a< x\leq h}\left|(\psi(x)-\psi_a)^{n-\mu}v(x)-\sum_{k=0}^{n-1}\frac{g_k(\psi(x)-\psi_a)^{k}}
{\Gamma(\mu-n+k+1)}\right|<K\right\}$$
and on this set we define the operator ${\rm A}$ by
$$ {\rm A}u:=  \sum_{k=0}^{n-1}g_k\frac{(\psi(x)-\psi_a)^{\mu-n+k}}{\Gamma(\mu-n+k+1)} + \frac{1}{\Gamma{(\mu)}}\int_a^x\psi'(\tau)(\psi(x)-\psi(\tau))^{\mu-1}f(\tau,u(\tau)){\rm d}\tau.$$
Then for any $u\in B, {\rm A}u\in C(a,h].$ Moreover, for $x\in(a,h]$, we have
\begin{equation*}
  \begin{split}
&\left|(\psi(x)-\psi_a)^{n-\mu}{\rm A} u(x)-\sum_{k=0}^{n-1}
\frac{g_k(\psi(x)-\psi_a)^k}{\Gamma(\mu-n+k+1)}\right|\\
=&\left|\frac{(\psi(x)-\psi_a)^{n-\mu}}{\Gamma(\mu)}
\int_a^x\psi'(\tau)(\psi(x)-\psi(\tau))^{\mu-1}f(\tau,u(\tau)){\rm d}\tau\right|\\
\leq &\frac{(\psi(x)-\psi_a)^{n-\mu}}{\Gamma(\mu)} M \int_a^x\psi'(\tau)(\psi(x)-\psi(\tau))^{\mu-1}{\rm d}\tau
=\frac{(\psi(x)-\psi_a)^{n}M}{\Gamma(\mu+1)}< K.
\end{split}
\end{equation*}
This shows that ${\rm A}u\in B$ if $u\in B$, i.e., the operator ${\rm A}$ maps the set $B$ into itself.

We introduce a new set
$$\widehat{B}:=\left\{v\in C(a,h]: \sup_{a<x\leq h}\left|(\psi(x)-\psi_a)^{n-\mu}v(x)\right|<\infty\right\},$$
and on this set we define a norm $\|\cdot\|_{\widehat{B}}$ by
$$\|v\|_{\widehat{B}}:= \sup_{a<x\leq h}\left|(\psi(x)-\psi_a)^{n-\mu}v(x)\right|.$$
Then $\widehat{B}$ is a normed linear space and $B$ is a complete subset of this space. We use ${\rm A}$ to rewrite the integral equation (\ref{eq-Vol-int-eq1-P1})
as ${\rm A}u=u$. Hence, in order to prove the desired result, it is sufficient to show that the operator ${\rm A}$ has a unique fixed point.

Let $v_1,v_2\in B$, for $j\geq1$ we have
\begin{equation*}
\begin{split}
  &\|{\rm A}^jv_1-{\rm A}^jv_2\|_{\widehat{B}}=\sup_{a<x\leq h}\left| (\psi(x)-\psi_a)^{n-\mu}({\rm A}{\rm A}^{j-1}v_1-{\rm A}{\rm A}^{j-1}v_2)\right|\\
=&\sup_{a<x\leq h}\left|\frac{(\psi(x)-\psi_a)^{n-\mu}}{\Gamma(\mu)}
\int_a^x\psi'(\tau)(\psi(x)-\psi(\tau))^{\mu-1}\left[f(\tau,{\rm A}^{j-1}v_1(\tau))
-f(\tau,{\rm A}^{j-1}v_2(\tau))\right]{\rm d}\tau\right|\\
\leq&\sup_{a<x\leq h}\frac{(\psi(x)-\psi_a)^{n-\mu}}{\Gamma(\mu)}
\int_a^x\psi'(\tau)(\psi(x)-\psi(\tau))^{\mu-1}\left|f(\tau,{\rm A}^{j-1}v_1(\tau))
-f(\tau,{\rm A}^{j-1}v_2(\tau))\right|{\rm d}\tau\\
\leq&\sup_{a<x\leq h}\frac{C_L(\psi(x)-\psi_a)^{n-\mu}}{\Gamma(\mu)}
\int_a^x\psi'(\tau)(\psi(x)-\psi(\tau))^{\mu-1}\left|{\rm A}^{j-1}v_1(\tau)
-{\rm A}^{j-1}v_2(\tau)\right|{\rm d}\tau\\
\leq&\frac{C_L\|{\rm A}^{j-1}v_1-{\rm A}^{j-1}v_2\|_{\widehat{B}}}{\Gamma(\mu)}
\sup_{a<x\leq h}(\psi(x)-\psi_a)^{n-\mu}\int_a^x
\psi'(\tau)(\psi(x)-\psi(\tau))^{\mu-1}(\psi(\tau)-\psi_a)^{\mu-n}{\rm d}\tau\\
=&\frac{C_L(\psi(h)-\psi_a)^{\mu}\Gamma(\mu-n+1)}{\Gamma(2\mu-n+1)}
\|{\rm A}^{j-1}v_1-{\rm A}^{j-1}v_2\|_{\widehat{B}}.
\end{split}
\end{equation*}
By induction, we conclude that
$$\|{\rm A}^jv_1-{\rm A}^jv_2\|_{\widehat{B}}\leq \left(\frac{C_L(\psi(h)-\psi_a)^{\mu}\Gamma(\mu-n+1)}
{\Gamma(2\mu-n+1)}\right)^j\|v_1-v_2\|_{\widehat{B}}.$$
From (H1), we check that
\begin{equation*}
  h<\psi^{-1}\left(\left(\frac{\Gamma(2\mu-n+1)}{\Gamma(\mu-n+1)C_L}\right)^{1/\mu}
+\psi_a\right)\Rightarrow\psi(h)-\psi_a
<\left(\frac{\Gamma(2\mu-n+1)}{\Gamma(\mu-n+1)C_L}\right)^{1/\mu}.
\end{equation*}
This ensures that
\begin{equation*}
  \frac{C_L(\psi(h)-\psi_a)^{\mu}\Gamma(\mu-n+1)}{\Gamma(2\mu-n+1)}<1.
\end{equation*}
Then, we know that ${\rm A}$ is a contraction. Thus an application of the fixed point theorem yields the existence and the uniqueness of the solution of the integral equation (\ref{eq-Vol-int-eq1-P1}).

(ii)For this case, we define the set
$$B:=\left\{v\in C[a,h]: \sup_{a\leq x\leq h}\left|v(x)-\sum_{k=0}^{n-1}\frac{c_k(\psi(x)-\psi_a)^{k}}
{k!}\right|<K\right\}$$
and on this set we define the operator ${\rm A}$ by
$$ {\rm A}u:=  \sum_{k=0}^{n-1}c_k\frac{(\psi(x)-\psi_a)^{k}}{k!} + \frac{1}{\Gamma{(\mu)}}\int_a^x\psi'(\tau)(\psi(x)-\psi(\tau))^{\mu-1}f(\tau,u(\tau)){\rm d}\tau.$$
Then for any $u\in B, {\rm A}u\in C[a,h].$ Moreover, for $x\in[a,h]$, we have
\begin{equation*}
  \begin{split}
&\left|{\rm A} u(x)-\sum_{k=0}^{n-1}
\frac{c_k(\psi(x)-\psi_a)^k}{k!}\right|
=\left|\frac{1}{\Gamma(\mu)}
\int_a^x\psi'(\tau)(\psi(x)-\psi(\tau))^{\mu-1}f(\tau,u(\tau)){\rm d}\tau\right|\\
\leq &\frac{M}{\Gamma(\mu)} \int_a^x\psi'(\tau)(\psi(x)-\psi(\tau))^{\mu-1}{\rm d}\tau
=\frac{(\psi(x)-\psi_a)^{\mu}M}{\Gamma(\mu+1)}< K.
\end{split}
\end{equation*}
This shows that the operator ${\rm A}$ maps the set $B$ into itself. Let $v_1,v_2\in B$, for $j\geq1$ we have
\begin{equation*}
\begin{split}
  &\|{\rm A}^jv_1-{\rm A}^jv_2\|=\sup_{a\leq x\leq h}\left| ({\rm A}{\rm A}^{j-1}v_1-{\rm A}{\rm A}^{j-1}v_2)\right|\\
=&\sup_{a\leq x\leq h}\left|\frac{1}{\Gamma(\mu)}
\int_a^x\psi'(\tau)(\psi(x)-\psi(\tau))^{\mu-1}\left[f(\tau,{\rm A}^{j-1}v_1(\tau))
-f(\tau,{\rm A}^{j-1}v_2(\tau))\right]{\rm d}\tau\right|\\
\leq&\sup_{a\leq x\leq h}\frac{1}{\Gamma(\mu)}
\int_a^x\psi'(\tau)(\psi(x)-\psi(\tau))^{\mu-1}\left|f(\tau,{\rm A}^{j-1}v_1(\tau))
-f(\tau,{\rm A}^{j-1}v_2(\tau))\right|{\rm d}\tau\\
\leq&\sup_{a\leq x\leq h}\frac{C_L}{\Gamma(\mu)}
\int_a^x\psi'(\tau)(\psi(x)-\psi(\tau))^{\mu-1}\left|{\rm A}^{j-1}v_1(\tau)
-{\rm A}^{j-1}v_2(\tau)\right|{\rm d}\tau\\
\leq&\frac{C_L\|{\rm A}^{j-1}v_1-{\rm A}^{j-1}v_2\|}{\Gamma(\mu)}
\sup_{a\leq x\leq h}\int_a^x
\psi'(\tau)(\psi(x)-\psi(\tau))^{\mu-1}{\rm d}\tau\\
=&\frac{C_L(\psi(h)-\psi_a)^{\mu}}{\Gamma(\mu+1)}
\|{\rm A}^{j-1}v_1-{\rm A}^{j-1}v_2\|.
\end{split}
\end{equation*}
Now we conclude that
$$\|{\rm A}^jv_1-{\rm A}^jv_2\|\leq \left(\frac{C_L(\psi(h)-\psi_a)^{\mu}}
{\Gamma(\mu+1)}\right)^j\|v_1-v_2\|.$$
The hypothesis (H2) ensures that $\frac{C_L(\psi(h)-\psi_a)^{\mu}}
{\Gamma(\mu+1)}<1$. Then, we know ${\rm A}$ is a contraction. Thus an application of the fixed point theorem yields the existence and the uniqueness of the solution of the integral equation (\ref{eq-Vol-int-eq2-P2}).

(iii) For this case, we define the operator ${\rm A}$ by
\begin{equation*}
 \begin{split}{\rm A}u: =&-\frac{\kappa(\psi(x)-\psi_a)^{\mu-1}}{2\Gamma(\mu)}
\int_{a}^{b} \psi'(\tau)(\psi_b-\psi(\tau))f(\tau,u(\tau)){\rm d}\tau\\
     & + \frac{1}{\Gamma{(\mu)}}\int_a^x\psi'(\tau)(\psi(x)-\psi(\tau))^{\mu-1}f(\tau,u(\tau)){\rm d}\tau.
 \end{split}
\end{equation*}
Then for any $u\in C[a,b], {\rm A}u\in C[a,b]$. Let $v_1,v_2\in C[a,b]$, we have
\begin{equation*}
 \begin{split}
&~~\frac{\kappa(\psi(x)-\psi_a)^{\mu-1}}{2\Gamma(\mu)}
\int_{a}^{b} \psi'(\tau)(\psi_b-\psi(\tau))|f(\tau,v_1(\tau))-
f(\tau,v_2(\tau))|{\rm d}\tau\\
\leq &~~\frac{\kappa(\psi(x)-\psi_a)^{\mu-1}C_L}{2\Gamma(\mu)}
\|v_1-v_2\|\int_{a}^{b} \psi'(\tau)(\psi_b-\psi(\tau)){\rm d}\tau
\leq \frac{(\psi_b-\psi_a)^{\mu}C_L}{2\Gamma(\mu)}\|v_1-v_2\|,
\end{split}
\end{equation*}
and
\begin{equation*}
 \begin{split}
&~~\frac{1}{\Gamma(\mu)}
\int_{a}^{x} \psi'(\tau)(\psi(x)-\psi(\tau))^{\mu-1}|f(\tau,v_1(\tau))-
f(\tau,v_2(\tau))|{\rm d}\tau\\
\leq &~~\frac{C_L}{\Gamma(\mu)}\|v_1-v_2\|
\int_{a}^{x} \psi'(\tau)(\psi(x)-\psi(\tau))^{\mu-1}{\rm d}\tau
\leq \frac{(\psi_b-\psi_a)^{\mu}C_L}{\Gamma(\mu+1)}\|v_1-v_2\|.
\end{split}
\end{equation*}
So, we have
\begin{equation*}
\|{\rm A}v_1-{\rm A}v_2\|=\max_{a\leq x\leq b}\left|{\rm A}v_1-{\rm A}v_2\right|
\leq\frac{C_L(\psi_b-\psi_a)^{\mu}(\mu+2)}{2\Gamma(\mu+1)}
\|v_1-v_2\|.
\end{equation*}
The hypothesis (H3) ensures that $\frac{C_L(\psi_b-\psi_a)^{\mu}(\mu+2)}
{2\Gamma(\mu+1)}<1$. Then, we know ${\rm A}$ is a contraction. Thus an application of the fixed point theorem yields the existence and the uniqueness of the solution of the integral equation (\ref{eq-Fhm-int-eq3-P3}).

(iv) We define the operator ${\rm A}$ by
\begin{equation*}
 \begin{split}{\rm A}u: =&-\frac{\kappa(\psi(x)-\psi_a)}{2\Gamma(\mu)}
\int_{a}^{b} \psi'(\tau)(\psi_b-\psi(\tau))^{\mu-1}f(\tau,u(\tau)){\rm d}\tau\\
     & + \frac{1}{\Gamma{(\mu)}}\int_a^x\psi'(\tau)(\psi(x)-\psi(\tau))^{\mu-1}f(\tau,u(\tau)){\rm d}\tau.
 \end{split}
\end{equation*}
Then for any $u\in C[a,b], {\rm A}u\in C[a,b]$. Let $v_1,v_2\in C[a,b]$, similar to the above process, we have
\begin{equation*}
\|{\rm A}v_1-{\rm A}v_2\|=\max_{a\leq x\leq b}\left|{\rm A}v_1-{\rm A}v_2\right|
\leq~~\frac{2C_L(\psi_b-\psi_a)^{\mu}}{\Gamma(\mu+1)}
\|v_1-v_2\|.
\end{equation*}
The hypothesis (H4) ensures that $\frac{2C_L(\psi_b-\psi_a)^{\mu}}
{\Gamma(\mu+1)}<1$. Then, we know ${\rm A}$ is a contraction. Thus an application of the fixed point theorem yields the existence and the uniqueness of the solution of the integral equation (\ref{eq-Fhm-int-eq4-P4}).
\end{proof}

\begin{theorem}\label{thm-stab-P1}Under the assumptions (H1), let
$u(x)$ and $v(x)$ be the solutions of the Cauchy type problem (\ref{IVP-RL-P1}) with different initial datum $\{g_k\}_{k=0}^{n-1}$ and $\{\widetilde{g}_k\}_{k=0}^{n-1}$. Then
\begin{equation}\label{stability-RLodes-1}
|u(x)-v(x)|\leq \sum_{k=0}^{n-1}|g_k-\widetilde{g}_k|
\left(\psi(x)-\psi_a\right)^{\mu-n+k}
E_{\mu,\mu-n+k+1}\left(C_{L}(\psi(x)-\psi_a)^\mu\right).
\end{equation}
And under the assumptions (H2), if $u(x)$ and $v(x)$ be the solutions of the Cauchy type problem (\ref{IVP-C-P2}) with different initial datum $\{c_k\}_{k=0}^{n-1}$ and $\{\widetilde{c}_k\}_{k=0}^{n-1}$,
then
\begin{equation}\label{stability-CLodes-1}
|u(x)-v(x)|\leq \sum_{k=0}^{n-1}|c_k-\widetilde{c}_k|
\left(\psi(x)-\psi_a\right)^{k}E_{\mu,k+1}\left(C_{L}(\psi(x)-\psi_a)^\mu\right),
\end{equation}
where $E_{\mu,\nu}(s)=\sum_{m=0}^{\infty}\frac{s^m}{\Gamma(\mu m+\nu)}$ is the two-parameter Mittag-Leffler function.
\end{theorem}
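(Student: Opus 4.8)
The plan is to reduce both estimates to a Picard-type iteration (a generalized Gronwall argument) built on the equivalent Volterra integral equations furnished by Theorem \ref{thm-4-equival-inte}. I would first treat the $\psi$-derivative problem and set $w(x):=|u(x)-v(x)|$. Representing $u$ and $v$ through the integral equation (\ref{eq-Vol-int-eq1-P1}) with their respective data $\{g_k\}$ and $\{\widetilde{g}_k\}$, subtracting, and taking absolute values, the Lipschitz hypothesis in (H1) yields, with the abbreviation $\phi(x):=\sum_{k=0}^{n-1}|g_k-\widetilde{g}_k|(\psi(x)-\psi_a)^{\mu-n+k}/\Gamma(\mu-n+k+1)$, the integral inequality
\begin{equation*}
w(x)\leq \phi(x)+\frac{C_L}{\Gamma(\mu)}\int_a^x\psi'(\tau)(\psi(x)-\psi(\tau))^{\mu-1}w(\tau)\,{\rm d}\tau.
\end{equation*}
The crucial observation is that, by Definition \ref{def_psiI}, the integral term on the right is exactly $C_L\,{_\psi{\rm D}^{-\mu}_{a,x}}w$, so the inequality reads $w\leq \phi+C_L\,{_\psi{\rm D}^{-\mu}_{a,x}}w$.

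Next I would iterate this inequality. Since the kernel $\psi'(\tau)(\psi(x)-\psi(\tau))^{\mu-1}$ is nonnegative ($\psi'>0$ and $\psi$ increasing), the operator ${_\psi{\rm D}^{-\mu}_{a,x}}$ is order-preserving, so substituting the inequality into itself $N$ times gives $w\leq\sum_{j=0}^{N-1}C_L^j({_\psi{\rm D}^{-\mu}_{a,x}})^j\phi + C_L^N({_\psi{\rm D}^{-\mu}_{a,x}})^N w$. The semigroup property (Lemma \ref{lmm-prp-Int-SG1}) collapses the iterates into $({_\psi{\rm D}^{-\mu}_{a,x}})^j={_\psi{\rm D}^{-j\mu}_{a,x}}$, and the power rule (Lemma \ref{lmm-prp-Int-FP1}) evaluates each term on the monomials of $\phi$ in closed form,
\begin{equation*}
C_L^j\,{_\psi{\rm D}^{-j\mu}_{a,x}}\frac{(\psi(x)-\psi_a)^{\mu-n+k}}{\Gamma(\mu-n+k+1)}
=C_L^j\frac{(\psi(x)-\psi_a)^{(j+1)\mu-n+k}}{\Gamma((j+1)\mu-n+k+1)}.
\end{equation*}
Summing over $j\geq0$ and factoring out $(\psi(x)-\psi_a)^{\mu-n+k}$ reproduces precisely the series defining $E_{\mu,\mu-n+k+1}(C_L(\psi(x)-\psi_a)^\mu)$, so the $\phi$-part of the iteration is exactly the asserted right-hand side of (\ref{stability-RLodes-1}).

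The one genuinely analytic point, and the step I expect to be the main obstacle, is showing the remainder $C_L^N({_\psi{\rm D}^{-\mu}_{a,x}})^N w$ tends to $0$. Here I would work in the weighted space $\widehat{B}$ from the proof of Theorem \ref{thm-exis-ine-4}: since $u,v\in C(a,h]$ the finiteness $L:=\sup_{a<x\leq h}(\psi(x)-\psi_a)^{n-\mu}w(x)<\infty$ holds, whence $w(x)\leq L(\psi(x)-\psi_a)^{\mu-n}$ with $\mu-n\in(-1,0)$, so the power rule again applies and gives
\begin{equation*}
C_L^N({_\psi{\rm D}^{-\mu}_{a,x}})^N w(x)\leq L\,\Gamma(\mu-n+1)(\psi(x)-\psi_a)^{\mu-n}\cdot\frac{(C_L(\psi(x)-\psi_a)^\mu)^N}{\Gamma(N\mu+\mu-n+1)}.
\end{equation*}
The last factor is the general term of a convergent Mittag-Leffler series, so the super-exponential growth of $\Gamma$ forces it to $0$ for each fixed $x$; this both kills the remainder and justifies the termwise summation, completing (\ref{stability-RLodes-1}).

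The second estimate (\ref{stability-CLodes-1}) follows the identical scheme, now starting from the integral equation (\ref{eq-Vol-int-eq2-P2}): the source term is $\phi(x)=\sum_{k=0}^{n-1}|c_k-\widetilde{c}_k|(\psi(x)-\psi_a)^{k}/k!$, the monomials carry nonnegative exponents $k\geq0$ so $w$ is genuinely bounded on $[a,h]$ and the remainder estimate is even easier, and the same semigroup-plus-power-rule computation (using $\Gamma(k+1)=k!$) yields $(\psi(x)-\psi_a)^{k}E_{\mu,k+1}(C_L(\psi(x)-\psi_a)^\mu)$ after summation over $j$. I would close by noting that convergence of the Mittag-Leffler series is automatic from its entirety, so no additional uniformity argument is required.
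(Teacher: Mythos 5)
Your proof is correct and follows essentially the same route as the paper: derive the Volterra integral inequality from Theorem \ref{thm-4-equival-inte}, apply a generalized Gronwall argument, and evaluate the resulting iterated integrals on the monomials of the source term via Lemmas \ref{lmm-prp-Int-SG1} and \ref{lmm-prp-Int-FP1} to assemble the Mittag-Leffler series. The only difference is that the paper invokes the generalized Gronwall inequality as a black box (Theorem A.1, quoted from the literature), whereas you prove that step inline by Picard iteration, including the remainder estimate in the weighted space $\widehat{B}$ — which makes the argument self-contained but is otherwise the same proof.
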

\begin{proof}~From Theorem \ref{thm-4-equival-inte} we know
\begin{equation*}
  \begin{split}
     |u(x)-v(x)| \leq& \sum_{k=0}^{n-1} \left|g_k-\widetilde{g}_k\right|
\frac{(\psi(x)-\psi_a)^{\mu-n+k}}{\Gamma(\mu-n+k+1)}\\
       & +\frac{C_{L}}{\Gamma(\mu)}\int_a^x\psi'(\tau)\left(\psi(x)-\psi(\tau)\right)
^{\mu-1}\left|u(\tau)-v(\tau)\right|{\rm d}\tau.
  \end{split}
\end{equation*}
Denote by $\eta(x)=|u(x)-v(x)|$, and
$$\xi(x)=\sum_{k=0}^{n-1} \left|g_k-\widetilde{g}_k\right|
\frac{(\psi(x)-\psi_a)^{\mu-n+k}}{\Gamma(\mu-n+k+1)}.$$
Then one has
\begin{equation*}
  \eta(x)\leq \xi(x)+ \frac{C_{L}}{\Gamma(\mu)} \int_a^x\psi'(\tau)\left(\psi(x)-\psi(\tau)\right)
^{\mu-1}\eta(\tau){\rm d}\tau.
\end{equation*}
From Theorem A.1 in Appendix A with $C=C_{L}/\Gamma(\mu)$, one has
$$\eta(x)\leq \xi(x)+ \int_{a}^{x}\sum_{m=1}^{\infty}\frac{(C_{L})^m}{\Gamma(m\mu)}
\psi'(\tau)(\psi(x)-\psi(\tau))^{m\mu-1}\xi(\tau){\rm d}\tau,\quad x\in [a,b].$$
Then the desired result (\ref{stability-RLodes-1}) comes from Lemma \ref{lmm-prp-Int-FP1}.

By similar way in the above proof, with replace of
$$\xi(x)=\sum_{k=0}^{n-1}|c_k-\widetilde{c}_k|\frac{(\psi(x)-\psi_a)^k}{\Gamma(k+1)},$$
the desired estimate (\ref{stability-CLodes-1}) is obtained.
\end{proof}

\begin{remark} In the case $f(x,u)= f(x)$, let
$$H^1_{0,\psi}(a,b):=\left\{v:~{\delta}^{1}_{\psi}v\in L^2_{\psi}(a,b),\quad v(a)=v(b)=0\right\}, $$
and $H^{-1}_{\psi}(a,b)$ be its dual space.
A weak form of (\ref{BVP-RL-P3}) is to find $v:={_\psi{\rm D}_{a,x}^{-(2-\mu)}}u\in H^1_{0,\psi}(a,b)$ such that
\begin{equation}\label{weak-bvp-rlpsi}
  ({\delta}^{1}_{\psi}v, {\delta}^{1}_{\psi}w)_{\psi}=-(f,w)_{\psi},\quad \forall w\in H^1_{0,\psi}(a,b).
\end{equation}
It is well known that for any $f\in H^{-1}_{\psi}(a,b)$, (\ref{weak-bvp-rlpsi}) admits a unique solution $v\in H^1_{0,\psi}(a,b)$.
Then we can recover $u$ uniquely from $u={_\psi{\rm D}_{a,x}^{2-\mu}}v$, thanks to
Lemma \ref{lmm-prp-Inv-RL}. Furthermore, we have the dependence estimate
$$\|u\|_{\psi}\leq C\|v\|_{\psi}\leq C|v|_{1,\psi}\leq C\|f\|_{\psi}.$$
\end{remark}

\section{Mapped Jacobi functions and its approximation}\label{sec:3}
\subsection{Mapped Jacobi functions (MJFs)}\label{subsec:3.1}
In this subsection, we introduce the MJFs and derive some properties for use. Denote $\mathbb{P}_N(I)$ as the space of all algebraic polynomials with degree at most $N$ defined on $I$.
Recall the Pochhammer symbol, for $c\in\mathbb{R}$ and $j\in\mathbb{N}$, which is defined by $$(c)_0=1,\quad (c)_j=:{c(c+1)\cdots(c+j-1)}=\frac{\Gamma(c+j)}{\Gamma(c)},\quad j>0.$$

The Jacobi polynomials $P^{\alpha,\beta}_n(s), s\in I$ with parameters $\alpha,\beta\in\mathbb{R}$ (\cite{Sze75}) is defined as
\begin{equation}\label{jacobi-poly-rp}
\begin{split}
  P_n^{\alpha,\beta}(s)=&\frac{(\alpha+1)_n}{n!}+\sum_{j=1}^{n-1}
  \frac{(n+\alpha+\beta+1)_j(\alpha+j+1)_{n-j}}{ j!(n-j)!}\left(\frac{s-1}{2}\right)^j\\
  &+\frac{(n+\alpha+\beta+1)_n}{n!}\left(\frac{s-1}{2}\right)^n,\quad n\geq1,
  \end{split}
\end{equation}
and $P_0^{\alpha,\beta}(s)=1$.
It is worth noting that the degenerate cases of the Jacobi polynomials when $-(\alpha+\beta+1)\in\mathbb{N}^{+}$.
For the Jacobi polynomials with one or both
parameters being negative integers, the transformation formulas are valid \cite{Sze75}:
\begin{enumerate}
  \item For $\beta\in\mathbb{R}, l\in\mathbb{N}^{+}, n\geq l\geq1$,
   \begin{equation}\label{jacobi-negative-integ1}
    P_n^{-l,\beta}(s)=c_n^{l,\beta}\left(\frac{s-1}{2}\right)^lP_{n-l}^{l,\beta}(s),
    \quad c_n^{l,\beta}=\frac{(n-l)!(\beta+n-l+1)_l}{n!}.
  \end{equation}
  \item For $\alpha\in\mathbb{R}, m\in\mathbb{N}^{+}, n\geq m\geq1$,
  \begin{equation}\label{jacobi-negative-integ1}
    P_n^{\alpha,-m}(s)=c_n^{m,\alpha}\left(\frac{s+1}{2}\right)^mP_{n-m}^{\alpha,m}(s).
  \end{equation}
  \item For $l, m\in\mathbb{N}^{+}, n\geq l+m\geq2$,
  \begin{equation}\label{jacobi-negative-integ1}
    P_n^{-l,-m}(s)=\left(\frac{s-1}{2}\right)^l\left(\frac{s+1}{2}\right)^mP_{n-l-m}^{l,m}(s).
  \end{equation}
\end{enumerate}

The well-known three-term recurrence relationship of the Jacobi polynomials $P^{\alpha,\beta}_n(s)$ with parameters $\alpha,\beta\in\mathbb{R}$ is fulfilled
for $-(\alpha+\beta+1)\notin\mathbb{N}^{+}$:
\begin{equation}\label{three-term-jacobi}
\begin{split}
  P_{n+1}^{\alpha,\beta}(s)&=(A_n^{\alpha,\beta} s-B_n^{\alpha,\beta})
  P_{n}^{\alpha,\beta}(s)-C_n^{\alpha,\beta}P_{n-1}^{\alpha,\beta}(s),\quad n\geq1,\\
  P_{0}^{\alpha,\beta}(s)&=1, \quad P_{1}^{\alpha,\beta}(s)=\frac{\alpha+\beta+2}{2}s+\frac{\alpha-\beta}{2},
  \end{split}
\end{equation}
where
\begin{equation*}
\left\{
  \begin{array}{ll}
    A_n^{\alpha,\beta}&=\displaystyle\frac{(2n+\alpha+\beta+1)(2n+\alpha+\beta+2)}
{2(n+1)(n+\alpha+\beta+1)}, \\
   B_n^{\alpha,\beta}&=\displaystyle\frac{(\beta^2-\alpha^2)(2n+\alpha+\beta+1)}
   {2(n+1)(n+\alpha+\beta+1)(2n+\alpha+\beta)}, \\
    C_n^{\alpha,\beta}&=\displaystyle\frac{(n+\alpha)(n+\beta)(2n+\alpha+\beta+2)}
{(n+1)(n+\alpha+\beta+1)(2n+\alpha+\beta)}.
  \end{array}
\right.
\end{equation*}

For $\alpha,\beta>-1$, the classical Jacobi polynomials are orthogonal with respect to
the Jacobi weight function $\omega^{\alpha,\beta}(s)=(1-s)^\alpha(1+s)^\beta$, namely,
\begin{equation}\label{Jacobi-orth}
  \int_{-1}^1P^{\alpha,\beta}_n(s)P^{\alpha,\beta}_m(s)\omega^{\alpha,\beta}(s){\rm d}s
=\gamma_n^{\alpha,\beta}\delta_{mn},
\end{equation}
where
\begin{equation*}
  \gamma_n^{\alpha,\beta}=\frac{2^{\alpha+\beta+1}\Gamma(n+\alpha+1)\Gamma(n+\beta+1)}
{(2n+\alpha+\beta+1)n!\Gamma(n+\alpha+\beta+1)},
\end{equation*}
and $\delta_{mn}$ is the Kronecker symbol, i.e.,
\begin{equation*}
  \delta_{mn}=\left\{
                \begin{array}{ll}
                  1, & \hbox{if}\quad  m=n, \\
                  0, & \hbox{otherwise.}
                \end{array}
              \right.
\end{equation*}
Some additional properties of Jacobi polynomials can be referred to \cite{SheTW11,Sze75}.

\begin{definition}\label{MJFs}Define the mapped Jacobi functions (MJFs) by
\begin{equation*}
  J^{\alpha,\beta}_{n,\psi}(x)=:P^{\alpha,\beta}_{n}(s(x)) =
  P^{\alpha,\beta}_{n}\left(\kappa(\psi(x)-\psi_a)-1\right),
\quad n=0,1,\cdots,
\end{equation*}
for $x\in[a,b].$
\end{definition}

Some useful properties are given in the following.
\begin{theorem}\label{MJFs-3-term-recu}The MJFs are generated by the three-term recurrence relation as
\begin{eqnarray}\label{P1-MJFs}
 \nonumber  J^{\alpha,\beta}_{n+1,\psi}(x)&=&\left(A_n^{\alpha,\beta}\kappa(\psi(x)-\psi_a)-
 (A_n^{\alpha,\beta}+B_n^{\alpha,\beta})
   \right)J^{\alpha,\beta}_{n,\psi}(x)-C_n^{\alpha,\beta}J^{\alpha,\beta}_{n-1,\psi}(x) \\
 \nonumber    &=& \left( -A_n^{\alpha,\beta}\kappa(\psi_b-\psi(x))+(A_n^{\alpha,\beta}-B_n^{\alpha,\beta})
   \right)J^{\alpha,\beta}_{n,\psi}(x)
   -C_n^{\alpha,\beta}J^{\alpha,\beta}_{n-1,\psi}(x),\\
  J^{\alpha,\beta}_{1,\psi}(x)&=& A_0^{\alpha,\beta}\kappa(\psi(x)
-\psi_a)-(A_0^{\alpha,\beta}+B_0^{\alpha,\beta})\\
\nonumber  &=&-A_0^{\alpha,\beta}\kappa(\psi_b-\psi(x))+(A_0^{\alpha,\beta}-B_0^{\alpha,\beta}),\\
\nonumber  J^{\alpha,\beta}_{0,\psi}(x)&=&1.
\end{eqnarray}
\end{theorem}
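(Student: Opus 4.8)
The plan is to obtain the recurrence by direct substitution of the mapping into the classical three-term recurrence for Jacobi polynomials. By Definition \ref{MJFs}, $J^{\alpha,\beta}_{n,\psi}(x) = P^{\alpha,\beta}_{n}(s(x))$, so every identity satisfied by $P^{\alpha,\beta}_n$ as a function of $s$ transfers verbatim to $J^{\alpha,\beta}_{n,\psi}$ once we evaluate at $s = s(x)$. The only nontrivial content is to rewrite the affine coefficient $A_n^{\alpha,\beta} s - B_n^{\alpha,\beta}$ of (\ref{three-term-jacobi}) in terms of $\psi(x)-\psi_a$ (and, alternatively, of $\psi_b - \psi(x)$), which is a one-line computation.

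First I would start from (\ref{three-term-jacobi}), valid whenever $-(\alpha+\beta+1) \notin \mathbb{N}^+$, and evaluate it at $s = s(x)$. Using the first representation $s(x) = \kappa(\psi(x)-\psi_a) - 1$ from (\ref{map_all}), I substitute into the linear factor:
\[
A_n^{\alpha,\beta} s(x) - B_n^{\alpha,\beta} = A_n^{\alpha,\beta}\kappa(\psi(x)-\psi_a) - \left(A_n^{\alpha,\beta} + B_n^{\alpha,\beta}\right),
\]
which, together with $J^{\alpha,\beta}_{m,\psi}(x) = P^{\alpha,\beta}_{m}(s(x))$, yields the first line of (\ref{P1-MJFs}). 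Repeating the same step with the equivalent representation $s(x) = -\kappa(\psi_b - \psi(x)) + 1$ gives
\[
A_n^{\alpha,\beta} s(x) - B_n^{\alpha,\beta} = -A_n^{\alpha,\beta}\kappa(\psi_b - \psi(x)) + \left(A_n^{\alpha,\beta} - B_n^{\alpha,\beta}\right),
\]
which produces the second line. The coefficient $C_n^{\alpha,\beta}$ is untouched, since it multiplies $P^{\alpha,\beta}_{n-1}$ rather than $s$.

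For the seed values I would simply evaluate the base cases of (\ref{three-term-jacobi}) at $s(x)$. From $P^{\alpha,\beta}_0 \equiv 1$ we immediately obtain $J^{\alpha,\beta}_{0,\psi}(x) = 1$; and from $P^{\alpha,\beta}_1(s) = \frac{\alpha+\beta+2}{2}\, s + \frac{\alpha-\beta}{2}$, which is precisely $A_0^{\alpha,\beta} s - B_0^{\alpha,\beta}$ (one checks $A_0^{\alpha,\beta} = \frac{\alpha+\beta+2}{2}$ and $B_0^{\alpha,\beta} = \frac{\beta-\alpha}{2}$ directly from the defining formulas), the two displayed expressions for $J^{\alpha,\beta}_{1,\psi}$ follow by the same affine substitution used above with $n = 0$.

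I do not anticipate a genuine obstacle: the statement is a change of variable and the argument is essentially bookkeeping. The only point demanding care is consistency between the two forms of the mapping in (\ref{map_all}); presenting both versions of the recurrence is worthwhile precisely because the expansion in $\psi(x)-\psi_a$ and the expansion in $\psi_b-\psi(x)$ are each natural for the behavior at one of the two endpoints $a$ and $b$, and one must verify that the constant shifts $\mp 1$ are absorbed correctly into $A_n^{\alpha,\beta} \pm B_n^{\alpha,\beta}$.
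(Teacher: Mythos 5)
Your proposal is correct and follows exactly the paper's route: the paper proves this theorem in one line by substituting the variable transform (\ref{map_all}) into the classical three-term recurrence (\ref{three-term-jacobi}), which is precisely your argument, only written out with the explicit affine computations and the verification $A_0^{\alpha,\beta}=\frac{\alpha+\beta+2}{2}$, $B_0^{\alpha,\beta}=\frac{\beta-\alpha}{2}$ for the seed terms.
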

\begin{proof}~The three-term recurrence relation (\ref{P1-MJFs}) is a straightforward result from the corresponding three-term recurrence relation of Jacobi polynomials with the variable transform (\ref{map_all}).
\end{proof}

\begin{theorem}\label{props-MJFs} The MJFs fulfill the derivative relations:
\begin{equation}\label{P2-MJFs-1}
{\delta}^{1}_{\psi} J^{\alpha,\beta}_{n,\psi}(x)=\frac{\kappa(n+\alpha+\beta+1)}
{2}J^{\alpha+1,\beta+1}_{n-1,\psi}(x),\quad n\geq1.
\end{equation}
Furthermore, we have for $n\geq k$,
\begin{equation}\label{P2-MJFs-11}
{\delta}^{k}_{\psi}J^{\alpha,\beta}_{n,\psi}(x)=d_{n,k}^{\alpha,\beta}
J^{\alpha+k,\beta+k}_{n-k,\psi}(x),\quad d_{n,k}^{\alpha,\beta}=\frac{\kappa^k \Gamma(n+k+\alpha+\beta+1)}{2^k\Gamma(n+\alpha+\beta+1)}.
\end{equation}
And for $n\geq1$,
\begin{equation}\label{P2-MJFs-2}
J^{\alpha,\beta}_{n,\psi}(x)={\delta}^{1}_{\psi}\left(a_n^{\alpha,\beta}
J^{\alpha,\beta}_{n-1,\psi}(x)+b_n^{\alpha,\beta}
J^{\alpha,\beta}_{n,\psi}(x)+c_n^{\alpha,\beta}
J^{\alpha,\beta}_{n+1,\psi}(x)\right),
\end{equation}
where
\begin{equation*}
\left\{
  \begin{array}{ll}
    a_n^{\alpha,\beta}&=\displaystyle\frac{-2(n+\alpha)(n+\beta)}
{\kappa(n+\alpha+\beta)(2n+\alpha+\beta)(2n+\alpha+\beta+1)}, \\
   b_n^{\alpha,\beta}&=\displaystyle\frac{2(\alpha-\beta)}{\kappa(2n+\alpha+\beta)(2n+\alpha+\beta+2)}, \\
   c_n^{\alpha,\beta}&=\displaystyle\frac{2(n+\alpha+\beta+1)}{\kappa(2n+\alpha+\beta+1)(2n+\alpha+\beta+2)}.
  \end{array}
\right.
\end{equation*}
\end{theorem}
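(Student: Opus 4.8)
The plan is to reduce all three identities to known facts about the classical Jacobi polynomials $P^{\alpha,\beta}_n$, exploiting the fact that by Definition \ref{MJFs} we have $J^{\alpha,\beta}_{n,\psi}(x)=P^{\alpha,\beta}_n(s(x))$ while, by \eqref{tranf}, $s'(x)=\kappa\psi'(x)$. The single observation that drives everything is that for any smooth $F$,
$$
\delta^1_\psi\big[F(s(x))\big]=\frac{1}{\psi'(x)}\frac{{\rm d}}{{\rm d}x}F(s(x))=\frac{1}{\psi'(x)}F'(s(x))\,s'(x)=\kappa\,F'(s(x)),
$$
so that $\delta^1_\psi$ acts on an MJF exactly as $\kappa\,\tfrac{{\rm d}}{{\rm d}s}$ acts on the underlying Jacobi polynomial. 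Every MJF relation then becomes a scaled Jacobi-polynomial relation.

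First I would establish \eqref{P2-MJFs-1}. Taking $F=P^{\alpha,\beta}_n$ in the displayed identity gives $\delta^1_\psi J^{\alpha,\beta}_{n,\psi}(x)=\kappa\,(P^{\alpha,\beta}_n)'(s(x))$, and the classical Jacobi derivative formula $(P^{\alpha,\beta}_n)'(s)=\tfrac12(n+\alpha+\beta+1)P^{\alpha+1,\beta+1}_{n-1}(s)$ (see \cite{Sze75,SheTW11}), combined with $P^{\alpha+1,\beta+1}_{n-1}(s(x))=J^{\alpha+1,\beta+1}_{n-1,\psi}(x)$, yields \eqref{P2-MJFs-1} immediately. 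Next, \eqref{P2-MJFs-11} follows by induction on $k$: applying \eqref{P2-MJFs-1} once sends $(\alpha,\beta,n)\mapsto(\alpha+1,\beta+1,n-1)$ with a factor $\tfrac{\kappa}{2}(n+\alpha+\beta+1)$, and since the quantity (parameter sum $+$ degree) is invariant under this step, after $k$ applications the accumulated constant is
$$
\Big(\frac{\kappa}{2}\Big)^k\prod_{j=0}^{k-1}(n+\alpha+\beta+1+j)=\frac{\kappa^k}{2^k}\,\frac{\Gamma(n+k+\alpha+\beta+1)}{\Gamma(n+\alpha+\beta+1)}=d^{\alpha,\beta}_{n,k},
$$
which is exactly the claimed coefficient. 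The only thing to check is that the raised parameters remain in the range where the derivative rule applies, which is automatic here.

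Finally, for \eqref{P2-MJFs-2} the same chain-rule reduction converts the right-hand side into $\kappa$ times a combination of first derivatives of $P^{\alpha,\beta}_{n-1},P^{\alpha,\beta}_n,P^{\alpha,\beta}_{n+1}$ at $s(x)$; because each coefficient $a^{\alpha,\beta}_n,b^{\alpha,\beta}_n,c^{\alpha,\beta}_n$ carries a factor $1/\kappa$, this $\kappa$ cancels and \eqref{P2-MJFs-2} reduces to the purely polynomial identity
$$
P^{\alpha,\beta}_n(s)=\kappa a^{\alpha,\beta}_n (P^{\alpha,\beta}_{n-1})'(s)+\kappa b^{\alpha,\beta}_n (P^{\alpha,\beta}_n)'(s)+\kappa c^{\alpha,\beta}_n (P^{\alpha,\beta}_{n+1})'(s).
$$
This is precisely the classical ``antiderivative'' (inverse-of-derivative) formula for Jacobi polynomials recorded in \cite{SheTW11}, whose coefficients coincide with $\kappa a^{\alpha,\beta}_n,\kappa b^{\alpha,\beta}_n,\kappa c^{\alpha,\beta}_n$. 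I expect this last part to be the main obstacle: unlike \eqref{P2-MJFs-1} and \eqref{P2-MJFs-11}, it is not a one-line consequence of the derivative rule but a genuine connection formula. If a direct citation is to be avoided, I would use \eqref{P2-MJFs-1} to rewrite each of the three derivatives as a degree-lowered $P^{\alpha+1,\beta+1}$ polynomial and then invoke the three-term expansion of $P^{\alpha,\beta}_n$ in the basis $\{P^{\alpha+1,\beta+1}_j\}_{j=n-2}^{n}$, matching the three connection coefficients against $a^{\alpha,\beta}_n,b^{\alpha,\beta}_n,c^{\alpha,\beta}_n$; that matching is the one place where a nontrivial, though entirely standard, computation is unavoidable.
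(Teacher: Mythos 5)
Your proposal is correct and follows essentially the same route as the paper: the chain-rule observation $\delta^1_\psi[F(s(x))]=\kappa F'(s(x))$ reduces \eqref{P2-MJFs-1} to the classical Jacobi derivative formula, \eqref{P2-MJFs-11} to its iteration, and \eqref{P2-MJFs-2} to the classical inverse-derivative identity $P_n^{\alpha,\beta}=\frac{{\rm d}}{{\rm d}s}\bigl(A_nP_{n-1}^{\alpha,\beta}+B_nP_n^{\alpha,\beta}+C_nP_{n+1}^{\alpha,\beta}\bigr)$, which is exactly the paper's argument. Your extra details (the explicit telescoping product for $d_{n,k}^{\alpha,\beta}$ and the fallback derivation of the connection formula) only elaborate on steps the paper treats as immediate.
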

\begin{proof}~ The equality (\ref{P2-MJFs-1}) comes from
$$\frac{{\rm d}}{{\rm d}s}P_n^{\alpha,\beta}(s)=\frac{n+\alpha+\beta+1}{2}
P_{n-1}^{\alpha+1,\beta+1}(s),\quad n\geq1,$$
and (\ref{tranf}). We can use (\ref{P2-MJFs-1}) iteratively to generate (\ref{P2-MJFs-11}).

The equality (\ref{P2-MJFs-2}) comes from
for $n\geq1$,
\begin{equation*}
  P_n^{\alpha,\beta}(s)=\frac{{\rm d}}{{\rm d}s}\left(A_nP_{n-1}^{\alpha,\beta}(s)+B_nP_{n}^{\alpha,\beta}(s)
  +C_nP_{n+1}^{\alpha,\beta}(s)\right),
\end{equation*}
where
\begin{equation*}
\left\{
  \begin{array}{ll}
   A_n&=\displaystyle\frac{-2(n+\alpha)(n+\beta)}{(n+\alpha+\beta)(2n+\alpha+\beta)(2n+\alpha+\beta+1)}, \\
B_n&= \displaystyle\frac{2(\alpha-\beta)}{(2n+\alpha+\beta)(2n+\alpha+\beta+2)}, \\
C_n&= \displaystyle\frac{2(n+\alpha+\beta+1)}{(2n+\alpha+\beta+1)(2n+\alpha+\beta+2)}.
  \end{array}
\right.
\end{equation*}
The proof is completed.
\end{proof}

\begin{theorem}For $\alpha,\beta>-1$, the MJFs are orthogonal as
\begin{equation}\label{P3-MJFs}
  \int_a^bJ^{\alpha,\beta}_{n,\psi}(x)J^{\alpha,\beta}_{m,\psi}(x)
\varpi^{\alpha,\beta}(x){\rm d}x=\gamma_n^{\alpha,\beta}\delta_{nm},
\end{equation}
where $\varpi^{\alpha,\beta}(x)=\kappa^{\alpha+\beta+1} \left(\psi_b-\psi(x)\right)^\alpha
\left(\psi(x)-\psi_a\right)^\beta\psi'(x).$
\end{theorem}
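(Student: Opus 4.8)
The plan is to reduce the orthogonality of the MJFs to the already-known orthogonality (\ref{Jacobi-orth}) of the classical Jacobi polynomials by performing the change of variables $s=s(x)$ defined in (\ref{map_all}). Since $J^{\alpha,\beta}_{n,\psi}(x)=P^{\alpha,\beta}_{n}(s(x))$ by Definition \ref{MJFs}, the two factors in the product of basis functions transform directly into $P^{\alpha,\beta}_n(s)P^{\alpha,\beta}_m(s)$, so everything hinges on tracking how the Jacobi weight $\omega^{\alpha,\beta}$ and the differential ${\rm d}s$ behave under the substitution.

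First I would record the two elementary identities that the mapping (\ref{map_all}) supplies, namely
\begin{equation*}
1+s(x)=\kappa(\psi(x)-\psi_a),\qquad 1-s(x)=\kappa(\psi_b-\psi(x)),
\end{equation*}
both immediate from the two equivalent forms of $s(x)$ in (\ref{map_all}). Consequently the Jacobi weight pulls back as
\begin{equation*}
\omega^{\alpha,\beta}(s(x))=(1-s(x))^\alpha(1+s(x))^\beta=\kappa^{\alpha+\beta}(\psi_b-\psi(x))^\alpha(\psi(x)-\psi_a)^\beta.
\end{equation*}
Combining this with the Jacobian ${\rm d}s=\kappa\psi'(x)\,{\rm d}x$ from (\ref{tranf}) produces exactly one additional factor of $\kappa\psi'(x)$, so that $\omega^{\alpha,\beta}(s(x))\,{\rm d}s=\varpi^{\alpha,\beta}(x)\,{\rm d}x$ with $\varpi^{\alpha,\beta}$ precisely the weight in the statement.

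With these identities in hand, I would substitute $s=s(x)$ into the left-hand side of (\ref{Jacobi-orth}). Since $\psi$ is increasing with $\psi'>0$ and $\kappa=2/(\psi_b-\psi_a)$, the map $s(x)$ increases monotonically from $-1$ to $1$ as $x$ runs from $a$ to $b$, so the integration limits transform correctly and the Jacobi orthogonality integral becomes exactly the left-hand side of (\ref{P3-MJFs}), which therefore equals $\gamma_n^{\alpha,\beta}\delta_{nm}$. There is no genuine obstacle here; the only point requiring care is the bookkeeping of the power of $\kappa$, where the $\kappa^{\alpha+\beta}$ coming from the weight and the single $\kappa$ coming from the Jacobian combine to give the $\kappa^{\alpha+\beta+1}$ appearing in $\varpi^{\alpha,\beta}$. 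The restriction $\alpha,\beta>-1$ is inherited directly from (\ref{Jacobi-orth}) and guarantees integrability of the transformed weight near the endpoints.
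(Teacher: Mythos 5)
Your proposal is correct and follows essentially the same route as the paper: both reduce (\ref{P3-MJFs}) to the classical Jacobi orthogonality (\ref{Jacobi-orth}) via the substitution $s=s(x)$ from (\ref{map_all}). Your version merely spells out the bookkeeping of the weight and the $\kappa^{\alpha+\beta+1}$ factor that the paper leaves implicit.
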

\begin{proof}~ We derive from (\ref{map_all}) that
$$\int_a^bJ^{\alpha,\beta}_{n,\psi}(x)J^{\alpha,\beta}_{m,\psi}(x)
\varpi^{\alpha,\beta}(x){\rm d}x=\int_{-1}^{1}P^{\alpha,\beta}_{n}(s)P^{\alpha,\beta}_{m}(s)
\omega^{\alpha,\beta}(s){\rm d}s.$$
Then we have the orthogonality (\ref{P3-MJFs}) from (\ref{Jacobi-orth}).
\end{proof}

\begin{theorem}\label{Gauss-MJF-quad}Gauss-MJF-type quadratures hold for $\alpha,\beta>-1$. Let $\{s_j^{\alpha,\beta},\omega_j^{\alpha,\beta}\}_{j=0}^N$ be the Gauss-Jacobi-Lobatto nodes and weights. Denote
          \begin{equation*}
  \left\{x_j^{\alpha,\beta}=\psi^{-1}\left((s_j^{\alpha,\beta}+1)/{\kappa}+\psi_a\right),\quad
\varpi_j^{\alpha,\beta}=\omega_j^{\alpha,\beta}\right\}_{j=0}^N.
          \end{equation*}
Then,
\begin{equation}\label{P4-MJFs}
  \int_a^b p(x)\varpi^{\alpha,\beta}(x){\rm d}x=\sum_{j=0}^Np(x_j^{\alpha,\beta})\varpi_j^{\alpha,\beta}
,\quad \forall p(x)\in F_{2N-1}^{\psi},
\end{equation}
where
\begin{equation*}
  F_{N}^{\psi}=\mbox{\emph{span}}\left\{1,\psi(x),
\left(\psi(x)\right)^2
,\cdots, \left(\psi(x)\right)^N\right\}.
\end{equation*}
\end{theorem}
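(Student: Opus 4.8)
The plan is to reduce the Gauss--MJF quadrature directly to the classical Gauss--Jacobi--Lobatto quadrature on the reference interval $I=[-1,1]$ via the change of variables $s=s(x)$ defined in \eqref{map_all}. The guiding observation is that the substitution already used to prove the orthogonality \eqref{P3-MJFs} converts the weighted integral against $\varpi^{\alpha,\beta}(x)$ into the standard Jacobi-weighted integral against $\omega^{\alpha,\beta}(s)$, and it maps the proposed nodes $x_j^{\alpha,\beta}=\psi^{-1}\big((s_j^{\alpha,\beta}+1)/\kappa+\psi_a\big)$ back onto the Gauss--Jacobi--Lobatto nodes $s_j^{\alpha,\beta}$ with the weights left unchanged, $\varpi_j^{\alpha,\beta}=\omega_j^{\alpha,\beta}$.

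First I would record the key algebraic identity linking the two function spaces. Since $s(x)=\kappa(\psi(x)-\psi_a)-1$, each monomial $(\psi(x))^k$ is a polynomial of degree $k$ in $s$, and conversely; hence $p\in F_N^{\psi}$ if and only if $p\circ s^{-1}\in\mathbb{P}_N(I)$. In other words, the map $p(x)\mapsto q(s):=p(x(s))$ is a linear bijection between $F_N^{\psi}$ and $\mathbb{P}_N(I)$ that preserves degree. Next I would perform the substitution $s=s(x)$, using \eqref{tranf} in the combined form $\varpi^{\alpha,\beta}(x)\,{\rm d}x=\omega^{\alpha,\beta}(s)\,{\rm d}s$ (the identity underlying \eqref{P3-MJFs}), to write
\begin{equation*}
\int_a^b p(x)\,\varpi^{\alpha,\beta}(x)\,{\rm d}x=\int_{-1}^1 q(s)\,\omega^{\alpha,\beta}(s)\,{\rm d}s,
\qquad q(s)=p(x(s)).
\end{equation*}

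Then I would invoke the exactness of the classical Gauss--Jacobi--Lobatto quadrature: for any $q\in\mathbb{P}_{2N-1}(I)$ one has $\int_{-1}^1 q(s)\,\omega^{\alpha,\beta}(s)\,{\rm d}s=\sum_{j=0}^N q(s_j^{\alpha,\beta})\,\omega_j^{\alpha,\beta}$. Taking $p\in F_{2N-1}^{\psi}$ guarantees $q\in\mathbb{P}_{2N-1}(I)$ by the bijection above, so the quadrature applies. Finally, unwinding the node relation $q(s_j^{\alpha,\beta})=p(x(s_j^{\alpha,\beta}))=p(x_j^{\alpha,\beta})$ and using $\varpi_j^{\alpha,\beta}=\omega_j^{\alpha,\beta}$ yields exactly \eqref{P4-MJFs}.

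The proof is essentially a transfer of a known result through a diffeomorphism, so there is no deep obstacle; the only point requiring care is verifying that the degree-$2N-1$ exactness is correctly matched between the two spaces. Concretely, one must check that the nonlinear map $\psi$ does not spoil the polynomial degree count: exactness holds for $F_{2N-1}^{\psi}$ rather than for genuine polynomials in $x$ precisely because $\psi(x)$, not $x$, plays the role of the polynomial variable. I would emphasize that the space $F_N^{\psi}$ is tailored so that the bijection with $\mathbb{P}_N(I)$ is degree-preserving, which is what makes the $2N-1$ exactness carry over verbatim; any attempt to claim exactness for ordinary polynomials in $x$ would fail unless $\psi$ itself is affine.
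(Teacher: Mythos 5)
Your proposal is correct and follows essentially the same route as the paper: the substitution $s=s(x)$ converts $\varpi^{\alpha,\beta}(x)\,{\rm d}x$ into $\omega^{\alpha,\beta}(s)\,{\rm d}s$, membership $p\in F_{2N-1}^{\psi}$ gives $p(x(s))\in\mathbb{P}_{2N-1}(I)$, and the classical Gauss--Jacobi--Lobatto exactness then transfers verbatim. Your added remarks on the degree-preserving bijection and on why exactness fails for ordinary polynomials in $x$ when $\psi$ is not affine are a useful clarification but do not change the argument.
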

\begin{proof}~Since $p\left(\psi^{-1}(\frac{s+1}{\kappa}+\psi_a)\right)\in\mathbb{P}_{2N-1}(I)$ if $p(x)\in F^{\psi}_{2N-1}$, we can obtain from Gauss-Jacobi-Lobatto quadrature
\begin{equation*}
\begin{split}
   \int_a^b p(x)\varpi^{\alpha,\beta}(x){\rm d}x&=\int_{-1}^1p\left(\psi^{-1}\left((s+1)/{\kappa}+\psi_a\right)\right)\omega^{\alpha,\beta}(s){\rm d}s\\
&=\sum_{j=0}^Np\left(\psi^{-1}\left((s_j^{\alpha,\beta}+1)/{\kappa}+\psi_a\right)\right)\omega_j^{\alpha,\beta}=
\sum_{j=0}^Np(x_j^{\alpha,\beta})\varpi_j^{\alpha,\beta}.
\end{split}
\end{equation*}
This ends the proof.
\end{proof}

\begin{remark}\label{remarkrealparam}
The following equality is useful: For $\alpha,\beta>-1$,
\begin{equation}\label{GJquadN}
\sum_{j=0}^{N}\left(J_{N,\psi}^{\alpha,\beta}(x_j^{\alpha,\beta})\right)^2\varpi_j^{\alpha,\beta}
=\left(2+\frac{\alpha+\beta+1}{N}\right)\gamma_N^{\alpha,\beta}.
\end{equation}
\end{remark}

\subsection{Fractional calculus of the MJFs}\label{subsec:3.2}
We will derive formulas to evaluate the $\psi$-fractional calculus of the MJFs in this subsection. In effect, we have for $n=0,1,\cdots,$
\begin{equation*}
  {_\psi{\rm D}_{a,x}^{-\mu}} J_{n,\psi}^{\alpha,\beta}(x)= \kappa^{-\mu}
~{{\rm D}_{-1,s}^{-\mu}} P_{n}^{\alpha,\beta}(s),\quad
{_\psi{\rm D}_{a,x}^{\mu}} J_{n,\psi}^{\alpha,\beta}(x)= \kappa^{\mu}
~{{\rm D}_{-1,s}^{\mu}} P_{n}^{\alpha,\beta}(s),
\end{equation*}
where ${{\rm D}_{-1,x}^{-\mu}}$ and ${{\rm D}_{-1,x}^{\mu}}$ are the classical Riemann-Liouville integral and derivative defined as
$${{\rm D}_{-1,s}^{-\mu}}f(s)=\frac{1}{\Gamma(\mu)}\int_{-1}^s(s-t)^{\mu-1}f(t){\rm d}t, \quad {{\rm D}_{-1,s}^{\mu}}f(s)=\frac{{\rm d}^n}{{\rm d}s^n}\left({{\rm D}_{-1,s}^{-(n-\mu)}}f(s)\right),\quad n-1<\mu<n.$$

\begin{theorem}Let $S_{n}^{-}={_\psi{\rm D}_{a,x}^{-\mu}}J_{n,\psi}^{\alpha,\beta}(x)$ and
$S_{n}^{+}={_\psi{\rm D}_{x,b}^{-\mu}}J_{n,\psi}^{\alpha,\beta}(x)$. Then there holds the three-term recurrence relation as follows: for $n>0$,
\begin{equation*}
\begin{split}
   S_{n+1}^{-}=& \widetilde{A}_n^{\alpha,\beta}(\psi(x)-\psi_a)S_{n}^{-}-\widetilde{B}_{n,-}^{\alpha,\beta}
  S_{n}^{-}+\widetilde{C}_n^{\alpha,\beta}S_{n-1}^{-}+\widetilde{A}_n^{\alpha,\beta}{F}_{n,-}^{\alpha,\beta},\\
   S_{n+1}^{+}=& -\widetilde{A}_n^{\alpha,\beta}(\psi_b-\psi(x))S_{n}^{+}-\widetilde{B}_{n,+}^{\alpha,\beta}
  S_{n}^{+}+\widetilde{C}_n^{\alpha,\beta}S_{n-1}^{+}+\widetilde{A}_n^{\alpha,\beta}{F}_{n,+}^{\alpha,\beta},
\end{split}
\end{equation*}
with the starting terms as
\begin{equation*}
\begin{split}
   S_{0}^{-} &= \frac{(\psi(x)-\psi_a)^{\mu}}{\Gamma(\mu+1)},\quad
     S_{0}^{+} =\frac{(\psi_b-\psi(x))^{\mu}}{\Gamma(\mu+1)},\\
      S_{1}^{-} &= \frac{\kappa A_0^{\alpha,\beta}}{\Gamma(\mu+2)}(\psi(x)-\psi_a)^{\mu+1}-
  \frac{ A_0^{\alpha,\beta}+B_0^{\alpha,\beta}}{\Gamma(\mu+1)}
  (\psi(x)-\psi_a)^{\mu},\\
       S_{1}^{+} &=-\frac{\kappa A_0^{\alpha,\beta}}{\Gamma(\mu+2)}(\psi_b-\psi(x))^{\mu+1}+
  \frac{ A_0^{\alpha,\beta}-B_0^{\alpha,\beta}}{\Gamma(\mu+1)}
  (\psi_b-\psi(x))^{\mu},\\
\end{split}
\end{equation*}
where $E_n^{\alpha,\beta}=1+\mu\kappa A_n^{\alpha,\beta} c_n^{\alpha,\beta}$,
\begin{eqnarray*}
  &~& \widetilde{A}_n^{\alpha,\beta} = \frac{\kappa A_n^{\alpha,\beta}}
  {E_n^{\alpha,\beta}}, \quad
  \widetilde{B}_{n,-}^{\alpha,\beta} = -\frac{\mu\kappa A_n^{\alpha,\beta}
  b_n^{\alpha,\beta}+A_n^{\alpha,\beta}+B_n^{\alpha,\beta}}
  {E_n^{\alpha,\beta}}, \\
  &~&\widetilde{B}_{n,+}^{\alpha,\beta} = \frac{\mu\kappa A_n^{\alpha,\beta}
  b_n^{\alpha,\beta}-A_n^{\alpha,\beta}+B_n^{\alpha,\beta}}
  {E_n^{\alpha,\beta}}, \quad
  \widetilde{C}_n^{\alpha,\beta} = -\frac{\mu\kappa A_n^{\alpha,\beta}
  a_n^{\alpha,\beta}+C_n^{\alpha,\beta}}
  {E_n^{\alpha,\beta}}, \\
 &~& F_{n,-}^{\alpha,\beta}= \frac{\left(\psi(x)-\psi_a\right)^{\mu}}{\Gamma(\mu)}
\left(a_n^{\alpha,\beta}J_{n-1,\psi}^{\alpha,\beta}(a)
+b_n^{\alpha,\beta}J_{n,\psi}^{\alpha,\beta}(a)+
c_n^{\alpha,\beta}J_{n+1,\psi}^{\alpha,\beta}(a)\right),\\
&~& F_{n,+}^{\alpha,\beta}= \frac{\left(\psi_b-\psi(x)\right)^{\mu}}{\Gamma(\mu)}
\left(a_n^{\alpha,\beta}J_{n-1,\psi}^{\alpha,\beta}(b)
+b_n^{\alpha,\beta}J_{n,\psi}^{\alpha,\beta}(b)+
c_n^{\alpha,\beta}J_{n+1,\psi}^{\alpha,\beta}(b)\right),
\end{eqnarray*}
and $A_n^{\alpha,\beta},B_n^{\alpha,\beta},C_n^{\alpha,\beta}, a_n^{\alpha,\beta},b_n^{\alpha,\beta},c_n^{\alpha,\beta}$ as in Eqs. (\ref{three-term-jacobi}) and (\ref{P2-MJFs-2}).
\end{theorem}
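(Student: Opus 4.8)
The plan is to obtain both recurrences by applying the linear $\psi$-fractional integral operators ${_\psi{\rm D}_{a,x}^{-\mu}}$ and ${_\psi{\rm D}_{x,b}^{-\mu}}$ to the two relations already established for the MJFs: the three-term recurrence (\ref{P1-MJFs}) and the relation (\ref{P2-MJFs-2}) that writes $J_{n,\psi}^{\alpha,\beta}$ as a $\delta_\psi^1$-derivative of three neighbours. The only obstruction is that (\ref{P1-MJFs}) contains the factor $(\psi(x)-\psi_a)$ (resp. $(\psi_b-\psi(x))$), which does not commute with the fractional integral; resolving this coupling is exactly what generates the correction terms and the normalizing factor $E_n^{\alpha,\beta}$.

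First I would record two operational identities. The shift identity, obtained by writing $\psi(\tau)-\psi_a=(\psi(x)-\psi_a)-(\psi(x)-\psi(\tau))$ inside the defining integral, gives
\begin{equation*}
{_\psi{\rm D}_{a,x}^{-\mu}}\left[(\psi(x)-\psi_a)f(x)\right]=(\psi(x)-\psi_a)\,{_\psi{\rm D}_{a,x}^{-\mu}}f(x)-\mu\,{_\psi{\rm D}_{a,x}^{-(\mu+1)}}f(x),
\end{equation*}
and symmetrically, using $\psi_b-\psi(\tau)=(\psi_b-\psi(x))-(\psi(\tau)-\psi(x))$,
\begin{equation*}
{_\psi{\rm D}_{x,b}^{-\mu}}\left[(\psi_b-\psi(x))f(x)\right]=(\psi_b-\psi(x))\,{_\psi{\rm D}_{x,b}^{-\mu}}f(x)-\mu\,{_\psi{\rm D}_{x,b}^{-(\mu+1)}}f(x).
\end{equation*}
The integration-by-parts identity follows from the semigroup property (Lemma \ref{lmm-prp-Int-SG1}) together with ${_\psi{\rm D}_{a,x}^{-1}}{\delta}_\psi^1 g(x)=g(x)-g(a)$ and ${_\psi{\rm D}_{a,x}^{-\mu}}1=(\psi(x)-\psi_a)^\mu/\Gamma(\mu+1)$ (Lemma \ref{lmm-prp-Int-FP1} with $\gamma=0$); it reads
\begin{equation*}
{_\psi{\rm D}_{a,x}^{-(\mu+1)}}\,{\delta}_\psi^1 g(x)={_\psi{\rm D}_{a,x}^{-\mu}}g(x)-g(a)\frac{(\psi(x)-\psi_a)^\mu}{\Gamma(\mu+1)},
\end{equation*}
with right-sided counterpart ${_\psi{\rm D}_{x,b}^{-(\mu+1)}}{\delta}_\psi^1 g(x)=g(b)(\psi_b-\psi(x))^\mu/\Gamma(\mu+1)-{_\psi{\rm D}_{x,b}^{-\mu}}g(x)$, the extra sign coming from ${_\psi{\rm D}_{x,b}^{-1}}{\delta}_\psi^1 g(x)=g(b)-g(x)$.

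Next I would assemble the recurrence for $S_n^-$. Applying ${_\psi{\rm D}_{a,x}^{-\mu}}$ to the first form of (\ref{P1-MJFs}) and using the shift identity turns the problematic factor into $(\psi(x)-\psi_a)S_n^-$ plus a term $-\mu\kappa A_n^{\alpha,\beta}\,{_\psi{\rm D}_{a,x}^{-(\mu+1)}}J_{n,\psi}^{\alpha,\beta}$, an integral of the higher order $\mu+1$. I then substitute $J_{n,\psi}^{\alpha,\beta}={\delta}_\psi^1(a_n^{\alpha,\beta} J_{n-1,\psi}^{\alpha,\beta}+b_n^{\alpha,\beta} J_{n,\psi}^{\alpha,\beta}+c_n^{\alpha,\beta} J_{n+1,\psi}^{\alpha,\beta})$ from (\ref{P2-MJFs-2}) and apply the integration-by-parts identity, which rewrites that order-$(\mu+1)$ term as $a_n^{\alpha,\beta}S_{n-1}^-+b_n^{\alpha,\beta}S_n^-+c_n^{\alpha,\beta}S_{n+1}^-$ minus the boundary contribution $\tfrac{1}{\mu}F_{n,-}^{\alpha,\beta}$ (recognizing $g(a)(\psi(x)-\psi_a)^\mu/\Gamma(\mu+1)=F_{n,-}^{\alpha,\beta}/\mu$). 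The decisive feature is that $S_{n+1}^-$ now reappears on the right-hand side with coefficient $-\mu\kappa A_n^{\alpha,\beta}c_n^{\alpha,\beta}$; collecting it on the left produces the factor $E_n^{\alpha,\beta}=1+\mu\kappa A_n^{\alpha,\beta}c_n^{\alpha,\beta}$, and dividing through yields the stated coefficients $\widetilde A_n^{\alpha,\beta}$, $\widetilde B_{n,-}^{\alpha,\beta}$, $\widetilde C_n^{\alpha,\beta}$. The starting values $S_0^-,S_1^-$ come directly from Lemma \ref{lmm-prp-Int-FP1} applied to $J_{0,\psi}^{\alpha,\beta}=1$ and to the linear $J_{1,\psi}^{\alpha,\beta}$ (cases $\gamma=0,1$).

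The right-sided recurrence is entirely parallel: I would start from the second form of (\ref{P1-MJFs}), use the right-sided shift and integration-by-parts identities, and again solve for $S_{n+1}^+$ against the same factor $E_n^{\alpha,\beta}$. The conceptual crux is the reduction of the spuriously higher-order integral ${_\psi{\rm D}^{-(\mu+1)}}J_{n,\psi}^{\alpha,\beta}$ back into the $S_{n-1},S_n,S_{n+1}$ family via (\ref{P2-MJFs-2}), after which the self-reference is merely linear algebra. The error-prone part, and hence the step I expect to demand the most care, is the bookkeeping of signs — in particular the flip between ${_\psi{\rm D}_{a,x}^{-1}}{\delta}_\psi^1 g=g-g(a)$ and ${_\psi{\rm D}_{x,b}^{-1}}{\delta}_\psi^1 g=g(b)-g$, and the signs of the boundary terms $F_{n,\pm}^{\alpha,\beta}$; I would cross-check each of $\widetilde A$, $\widetilde B_{\pm}$, $\widetilde C$ by matching independently against both the left- and right-sided computations rather than trusting a single pass.
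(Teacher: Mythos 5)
Your proposal is correct and follows essentially the same route as the paper: your ``shift identity'' is precisely the paper's splitting $\psi(\tau)-\psi_a=(\psi(x)-\psi_a)-(\psi(x)-\psi(\tau))$ inside the defining integral, your ``integration-by-parts identity'' is precisely the paper's integration by parts of $\int_a^x\psi'(\tau)(\psi(x)-\psi(\tau))^{\mu}J_{n,\psi}^{\alpha,\beta}(\tau)\,{\rm d}\tau$ via (\ref{P2-MJFs-2}), and both arguments then collect the self-referential $S_{n+1}^{\pm}$ term to divide by $E_n^{\alpha,\beta}$. Your instinct to cross-check signs is warranted, since the coefficient of $S_n^-$ you (and the paper's own derivation) obtain is $-(\mu\kappa A_n^{\alpha,\beta}b_n^{\alpha,\beta}+A_n^{\alpha,\beta}+B_n^{\alpha,\beta})/E_n^{\alpha,\beta}$, which disagrees in sign with the combination $-\widetilde{B}_{n,-}^{\alpha,\beta}$ as the theorem defines it---an inconsistency in the statement, not in your argument.
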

\begin{proof}~ Noting that from (\ref{P2-MJFs-2}) $$\psi'(x)J_{n,\psi}^{\alpha,\beta}(x)=\left(a_n^{\alpha,\beta}J_{n-1,\psi}^{\alpha,\beta}(x)
+b_n^{\alpha,\beta}J_{n,\psi}^{\alpha,\beta}(x)+
c_n^{\alpha,\beta}J_{n+1,\psi}^{\alpha,\beta}(x)\right)',$$
we obtain
\begin{equation*}
  \begin{split}
      & \int_{a}^{x}\psi'(\tau)\left(\psi(x)-\psi(\tau)\right)^{\mu}
      J_{n,\psi}^{\alpha,\beta}(\tau){\rm d}\tau\\
       =&\mu\int_{a}^{x}\psi'(\tau)\left(\psi(x)-\psi(\tau)\right)^{\mu-1}
       \left(a_n^{\alpha,\beta}J_{n-1,\psi}^{\alpha,\beta}(\tau)
+b_n^{\alpha,\beta}J_{n,\psi}^{\alpha,\beta}(\tau)+
c_n^{\alpha,\beta}J_{n+1,\psi}^{\alpha,\beta}(\tau)\right){\rm d}\tau\\
       & -\left(\psi(x)-\psi_a\right)^{\mu}
       \left(a_n^{\alpha,\beta}J_{n-1,\psi}^{\alpha,\beta}(a)
+b_n^{\alpha,\beta}J_{n,\psi}^{\alpha,\beta}(a)+
c_n^{\alpha,\beta}J_{n+1,\psi}^{\alpha,\beta}(a)\right).
  \end{split}
\end{equation*}
By making use of notation $F_{n,-}^{\alpha,\beta}$,
one has from above equality
\begin{equation*}
  \begin{split}
  & {_\psi{\rm D}_{a,x}^{-\mu}}\left[(\psi(x)-\psi_a)J_{n,\psi}^{\alpha,\beta}(x)\right]\\
  &=(\psi(x)-\psi_a)S_{n}^{-}-\frac{1}{\Gamma(\mu)}\int_{a}^{x}\psi'(\tau)\left(\psi(x)-\psi(\tau)\right)^{\mu}
      J_{n,\psi}^{\alpha,\beta}(\tau){\rm d}\tau\\
      &=(\psi(x)-\psi_a)S_{n}^{-}-\mu\left(a_n^{\alpha,\beta}S_{n-1}^{-}+b_n^{\alpha,\beta}S_{n}^{-}
      +c_n^{\alpha,\beta}S_{n+1}^{-}\right)+F_{n,-}^{\alpha,\beta}.
  \end{split}
\end{equation*}
Then by the three-term recurrence relation, one has
\begin{equation*}
  \begin{split}
 S_{n+1}^{-}= & {_\psi{\rm D}_{a,x}^{-\mu}}\left[J_{n+1,\psi}^{\alpha,\beta}(x)\right]\\
=&\kappa A_n^{\alpha,\beta}~{_\psi{\rm D}_{a,x}^{-\mu}}
  \left[(\psi(x)-\psi_a)J_{n,\psi}^{\alpha,\beta}(x)\right]-(A_n^{\alpha,\beta}+B_n^{\alpha,\beta})
  S_{n}^{-}-C_n^{\alpha,\beta} S_{n-1}^{-}\\
=& \kappa A_n^{\alpha,\beta}(\psi(x)-\psi_a)S_{n}^{-}-(\mu\kappa A_n^{\alpha,\beta} b_n^{\alpha,\beta} +A_n^{\alpha,\beta}+B_n^{\alpha,\beta})S_{n}^{-}\\
&-(\mu\kappa A_n^{\alpha,\beta}a_n^{\alpha,\beta}+C_n^{\alpha,\beta})S_{n-1}^{-}
-\mu\kappa A_n^{\alpha,\beta}c_n^{\alpha,\beta} S_{n+1}^{-}+\kappa A_n^{\alpha,\beta}
F_{n,-}^{\alpha,\beta}.
  \end{split}
\end{equation*}
This is the first equality.

Similarly, one has
\begin{equation*}
  \begin{split}
      & \frac{1}{\Gamma(\mu)}\int_{x}^{b}\psi'(\tau)\left(\psi(\tau)-\psi(x)\right)^{\mu}
      J_{n,\psi}^{\alpha,\beta}(\tau){\rm d}\tau\\
       & =-\mu\left(a_n^{\alpha,\beta}S_{n-1}^{+}+
       b_n^{\alpha,\beta}S_{n}^{+}+c_n^{\alpha,\beta}S_{n+1}^{+}\right)+F_{n,+}^{\alpha,\beta},
  \end{split}
\end{equation*}
and
\begin{equation*}
  \begin{split}
  & {_\psi{\rm D}_{x,b}^{-\mu}}\left[(\psi_b-\psi(x))J_{n,\psi}^{\alpha,\beta}(x)\right]\\
  &=(\psi_b-\psi(x))S_{n}^{+}-\frac{1}{\Gamma(\mu)}\int_{x}^{b}\psi'(\tau)
  \left(\psi(\tau)-\psi(x)\right)^{\mu}J_{n,\psi}^{\alpha,\beta}(\tau){\rm d}\tau\\
      &=(\psi_b-\psi(x))S_{n}^{+}+\mu\left(a_n^{\alpha,\beta}S_{n-1}^{+}+b_n^{\alpha,\beta}S_{n}^{+}
      +c_n^{\alpha,\beta}S_{n+1}^{+}\right)-F_{n,+}^{\alpha,\beta}.
  \end{split}
\end{equation*}
Then, one has
\begin{equation*}
  \begin{split}
  S_{n+1}^{+}=&{_\psi{\rm D}_{x,b}^{-\mu}}\left[J_{n+1,\psi}^{\alpha,\beta}(x)\right]\\
=&-\kappa A_n^{\alpha,\beta}~{_\psi{\rm D}_{x,b}^{-\mu}}
  \left[(\psi_b-\psi(x))J_{n,\psi}^{\alpha,\beta}(x)\right]+(A_n^{\alpha,\beta}
  -B_n^{\alpha,\beta})S_{n}^{+}-C_n^{\alpha,\beta} S_{n-1}^{+}\\
=& -\kappa A_n^{\alpha,\beta}(\psi_b-\psi(x))S_{n}^{+}-(\mu\kappa A_n^{\alpha,\beta} b_n^{\alpha,\beta} -A_n^{\alpha,\beta}+B_n^{\alpha,\beta})S_{n}^{+}\\
&-(\mu\kappa A_n^{\alpha,\beta}a_n^{\alpha,\beta}+C_n^{\alpha,\beta})S_{n-1}^{+}
-\mu\kappa A_n^{\alpha,\beta}c_n^{\alpha,\beta} S_{n+1}^{+}+\kappa A_n^{\alpha,\beta}
F_{n,+}^{\alpha,\beta}.
  \end{split}
\end{equation*}
This is the second equality.
The starting terms can be obtained by direct calculations.
\end{proof}

\begin{theorem}\label{thmIntMJFs}For $n\geq0$, if $\alpha\in\mathbb{R},\beta>-1$, then
    \begin{equation*}
    {_\psi{\rm D}_{a,x}^{-\mu}}\left[(\psi(x)-\psi_a)^\beta J_{n,\psi}^{\alpha,\beta}(x)\right]=\frac{\Gamma(n+\beta+1)}{\Gamma(n+\beta+\mu+1)}
    (\psi(x)-\psi_a)^{\beta+\mu}J_{n,\psi}^{\alpha-\mu,\beta+\mu}(x),
  \end{equation*}
  and if $\alpha>-1,\beta\in\mathbb{R}$, then
    \begin{equation*}
    {_\psi{\rm D}_{x,b}^{-\mu}}\left[(\psi_b-\psi(x))^\alpha J_{n,\psi}^{\alpha,\beta}(x)\right]=\frac{\Gamma(n+\alpha+1)}{\Gamma(n+\alpha+\mu+1)}
    (\psi_b-\psi(x))^{\alpha+\mu}J_{n,\psi}^{\alpha+\mu,\beta-\mu}(x).
  \end{equation*}
\end{theorem}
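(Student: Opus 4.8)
The plan is to transplant the $\psi$-fractional integral to a classical Riemann--Liouville integral on $I=[-1,1]$ through the mapping \eqref{map_all}, thereby reducing the statement to a known identity for Jacobi polynomials. Concretely, for any $g$ on $[a,b]$ write $\tilde g(s)=g(x(s))$ with $x(s)=\psi^{-1}((s+1)/\kappa+\psi_a)$. Substituting $t=s(\tau)$ in the defining integral of ${_\psi{\rm D}_{a,x}^{-\mu}}$ and using \eqref{tranf} together with $\psi(x)-\psi(\tau)=(s-t)/\kappa$ yields the change-of-variable relation ${_\psi{\rm D}_{a,x}^{-\mu}}g(x)=\kappa^{-\mu}\,{\rm D}_{-1,s}^{-\mu}\tilde g(s)$, which is exactly the relation recorded before the theorem, now invoked for the weighted function $g(x)=(\psi(x)-\psi_a)^\beta J_{n,\psi}^{\alpha,\beta}(x)$. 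Since $\psi(x)-\psi_a=(s+1)/\kappa$ and $J_{n,\psi}^{\alpha,\beta}(x)=P_n^{\alpha,\beta}(s)$ by Definition \ref{MJFs}, we have $\tilde g(s)=\kappa^{-\beta}(1+s)^\beta P_n^{\alpha,\beta}(s)$, so everything rests on the classical formula
\begin{equation*}
{\rm D}_{-1,s}^{-\mu}\!\left[(1+s)^\beta P_n^{\alpha,\beta}(s)\right]=\frac{\Gamma(n+\beta+1)}{\Gamma(n+\beta+\mu+1)}(1+s)^{\beta+\mu}P_n^{\alpha-\mu,\beta+\mu}(s).
\end{equation*}

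To prove this classical identity I would expand $P_n^{\alpha,\beta}$ in powers of $(1+s)$ via the hypergeometric representation $P_n^{\alpha,\beta}(s)=(-1)^n\frac{(\beta+1)_n}{n!}\,{}_2F_1(-n,n+\alpha+\beta+1;\beta+1;(1+s)/2)$ (see \cite{Sze75}), so that $(1+s)^\beta P_n^{\alpha,\beta}(s)$ becomes a finite linear combination of the powers $(1+s)^{\beta+k}$, $0\le k\le n$. Because $\beta>-1$, each such power is integrable at $s=-1$, and the specialization of Lemma \ref{lmm-prp-Int-FP1} to $\psi(s)=s$, $a=-1$ gives ${\rm D}_{-1,s}^{-\mu}(1+s)^{\beta+k}=\frac{\Gamma(\beta+k+1)}{\Gamma(\beta+k+\mu+1)}(1+s)^{\beta+k+\mu}$, which may be applied term by term. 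Pulling out the common factor $(1+s)^{\beta+\mu}$ and simplifying the Pochhammer/Gamma quotients — using $\Gamma(\beta+k+1)/(\beta+1)_k=\Gamma(\beta+1)$ and $\Gamma(\beta+k+\mu+1)=\Gamma(\beta+\mu+1)(\beta+\mu+1)_k$ — collapses the surviving sum into ${}_2F_1(-n,n+\alpha+\beta+1;\beta+\mu+1;(1+s)/2)$. Recognizing $(\alpha-\mu)+(\beta+\mu)+1=\alpha+\beta+1$, this hypergeometric function is precisely the one representing $P_n^{\alpha-\mu,\beta+\mu}(s)$, while the leftover constants telescope to $\Gamma(n+\beta+1)/\Gamma(n+\beta+\mu+1)$.

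Mapping back, the accumulated powers of $\kappa$ cancel exactly: the prefactor $\kappa^{-\mu}$ from the change of variable and $\kappa^{-\beta}$ from $\tilde g$ combine with $(1+s)^{\beta+\mu}=\kappa^{\beta+\mu}(\psi(x)-\psi_a)^{\beta+\mu}$ to give $\kappa^{0}=1$, and $P_n^{\alpha-\mu,\beta+\mu}(s)=J_{n,\psi}^{\alpha-\mu,\beta+\mu}(x)$ by Definition \ref{MJFs}; this produces the asserted left-sided formula. The right-sided statement follows by the mirror argument: substituting $t=s(\tau)$ in ${_\psi{\rm D}_{x,b}^{-\mu}}$ and using the second form $1-s=\kappa(\psi_b-\psi(x))$ of \eqref{map_all} reduces it to the right Riemann--Liouville integral on $I$, and the reflection symmetry $P_n^{\alpha,\beta}(-s)=(-1)^n P_n^{\beta,\alpha}(s)$ converts the right-endpoint identity into the already-proved left-endpoint one with $\alpha$ and $\beta$ interchanged.

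I expect the only real work to be the middle step: the bookkeeping that turns the term-by-term integrated series back into a single Jacobi polynomial. The substitutions and the cancellation of $\kappa$-powers are routine, but the recognition that the reindexed ${}_2F_1$ is exactly $P_n^{\alpha-\mu,\beta+\mu}$ — that is, that the third hypergeometric parameter shifts from $\beta+1$ to $\beta+\mu+1$ while the numerator parameters stay fixed — is where the identity genuinely lives, and it is precisely the place where the hypotheses $\beta>-1$ (left case) and $\alpha>-1$ (right case) are needed to license the term-by-term fractional integration.
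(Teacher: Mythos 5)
Your proposal is correct and follows essentially the same route as the paper: both reduce the $\psi$-fractional integral to the classical Riemann--Liouville integral on $[-1,1]$ via the map \eqref{map_all} and then invoke the classical identity ${\rm D}_{-1,s}^{-\mu}\bigl[(1+s)^\beta P_n^{\alpha,\beta}(s)\bigr]=\frac{\Gamma(n+\beta+1)}{\Gamma(n+\beta+\mu+1)}(1+s)^{\beta+\mu}P_n^{\alpha-\mu,\beta+\mu}(s)$, with the right-sided case handled by symmetry. The only difference is that the paper simply cites this identity as (2.33) of \cite{CheSW16}, whereas you supply a self-contained derivation via the hypergeometric representation and term-by-term integration; your derivation and $\kappa$-bookkeeping are both correct.
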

\begin{proof}~ We derive from (\ref{map_all}) and (2.33) in \cite{CheSW16} that
\begin{equation*}
  \begin{split}
{_\psi{\rm D}_{a,x}^{-\mu}}&\left[(\psi(x)-\psi_a)^\beta J_{n,\psi}^{\alpha,\beta}(x)\right]\\
  =&\frac{1}{\Gamma(\mu)}\int_a^x\psi'(\tau)(\psi(x)-\psi(\tau))^{\mu-1}
  \left[(\psi(\tau)-\psi_a)^\beta J_{n,\psi}^{\alpha,\beta}(\tau)\right]{\rm d}\tau\\
  =&\frac{1}{\kappa^{\beta+\mu}\Gamma(\mu)}\int_{-1}^{s}(s-t)^{\mu-1}(1+t)^\beta P_n^{\alpha,\beta}(t){\rm d}t\\
  =&\frac{\Gamma(n+\beta+1)}{\Gamma(n+\beta+\mu+1)}\left(\frac{1+s}{\kappa}\right)^{\beta+\mu}
  P_n^{\alpha-\mu,\beta+\mu}(s)\\
  =&\frac{\Gamma(n+\beta+1)}{\Gamma(n+\beta+\mu+1)}
    (\psi(x)-\psi_a)^{\beta+\mu}J_{n,\psi}^{\alpha-\mu,\beta+\mu}(x).
  \end{split}
\end{equation*}
The second equality can be obtained similarly.
\end{proof}

\begin{theorem}\label{thm-fd-mjfs}
For $n\geq0$, if $\alpha\in\mathbb{R},\beta-\mu>-1$, then
    \begin{equation*}
    {_\psi{\rm D}_{a,x}^{\mu}}\left[(\psi(x)-\psi_a)^\beta J_{n,\psi}^{\alpha,\beta}(x)\right]=\frac{\Gamma(n+\beta+1)}{\Gamma(n+\beta-\mu+1)}
    (\psi(x)-\psi_a)^{\beta-\mu}J_{n,\psi}^{\alpha+\mu,\beta-\mu}(x),
  \end{equation*}
  and if $\alpha-\mu>-1,\beta\in\mathbb{R}$, then
    \begin{equation*}
    {_\psi{\rm D}_{x,b}^{\mu}}\left[(\psi_b-\psi(x))^\alpha J_{n,\psi}^{\alpha,\beta}(x)\right]=\frac{\Gamma(n+\alpha+1)}{\Gamma(n+\alpha-\mu+1)}
    (\psi_b-\psi(x))^{\alpha-\mu}J_{n,\psi}^{\alpha-\mu,\beta+\mu}(x).
  \end{equation*}
\end{theorem}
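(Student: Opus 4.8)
The plan is to mirror the proof of Theorem~\ref{thmIntMJFs}: pull everything back to the reference interval $I=[-1,1]$ through the map \eqref{map_all}, invoke the classical fractional-derivative formula for Jacobi polynomials on $I$, and then push the result forward, checking that the powers of $\kappa$ cancel. First I would record the scaling identities induced by \eqref{map_all} and \eqref{tranf}. Since $1+s=\kappa(\psi(x)-\psi_a)$, we have $(\psi(x)-\psi_a)^\beta=\kappa^{-\beta}(1+s)^\beta$, and the same change of variables that produces the operator relations stated at the opening of Subsection~\ref{subsec:3.2} gives, for any admissible $f$, the covariance ${_\psi{\rm D}_{a,x}^{\mu}}f(x)=\kappa^{\mu}\,{\rm D}_{-1,s}^{\mu}\tilde f(s)$ with $\tilde f(s)=f(x(s))$. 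Applying this to $f(x)=(\psi(x)-\psi_a)^\beta J_{n,\psi}^{\alpha,\beta}(x)$, whose pullback is $\kappa^{-\beta}(1+s)^\beta P_n^{\alpha,\beta}(s)$, yields
\begin{equation*}
{_\psi{\rm D}_{a,x}^{\mu}}\left[(\psi(x)-\psi_a)^\beta J_{n,\psi}^{\alpha,\beta}(x)\right]
=\kappa^{\mu-\beta}\,{\rm D}_{-1,s}^{\mu}\left[(1+s)^\beta P_n^{\alpha,\beta}(s)\right].
\end{equation*}

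The key step is then to apply the classical Riemann--Liouville fractional-derivative formula for Jacobi polynomials weighted by $(1+s)^\beta$, the differentiation companion of the integration identity (2.33) of \cite{CheSW16} used in Theorem~\ref{thmIntMJFs}, namely
\begin{equation*}
{\rm D}_{-1,s}^{\mu}\left[(1+s)^\beta P_n^{\alpha,\beta}(s)\right]
=\frac{\Gamma(n+\beta+1)}{\Gamma(n+\beta-\mu+1)}(1+s)^{\beta-\mu}P_n^{\alpha+\mu,\beta-\mu}(s),
\end{equation*}
valid precisely under the hypothesis $\beta-\mu>-1$, which is what guarantees that the right-hand Jacobi function is well defined. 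Substituting this in and using $(1+s)^{\beta-\mu}=\kappa^{\beta-\mu}(\psi(x)-\psi_a)^{\beta-\mu}$ together with $P_n^{\alpha+\mu,\beta-\mu}(s)=J_{n,\psi}^{\alpha+\mu,\beta-\mu}(x)$, the prefactors $\kappa^{\mu-\beta}$ and $\kappa^{\beta-\mu}$ cancel and the first asserted identity drops out. The right-sided formula follows by the same argument applied to the representation $1-s=\kappa(\psi_b-\psi(x))$, the covariance ${_\psi{\rm D}_{x,b}^{\mu}}f(x)=\kappa^{\mu}\,{\rm D}_{s,1}^{\mu}\tilde f(s)$ (with ${\rm D}_{s,1}^{\mu}$ the right-sided classical derivative on $I$), and the right-endpoint companion identity, now requiring $\alpha-\mu>-1$.

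I do not expect a genuine obstacle here, since the content is a change of variables plus a cited scalar identity. The one point that demands care is to route the argument through the classical derivative formula rather than through Theorem~\ref{thmIntMJFs} and the inversion Lemma~\ref{lmm-prp-Inv-RL}: inverting the integral identity would reproduce the claim only under $\beta-\mu\ge 0$, because ${_\psi{\rm D}_{a,x}^{\mu}}\left({_\psi{\rm D}_{a,x}^{-\mu}}f\right)=f$ is asserted there for $f\in C[a,b]$, whereas the range $\beta-\mu\in(-1,0)$ introduces an endpoint singularity. The direct reduction sidesteps this difficulty and delivers the stated hypothesis $\beta-\mu>-1$ cleanly.
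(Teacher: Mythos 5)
Your proof is correct, but it takes a genuinely different route from the paper's. The paper's own proof is a one-liner: it applies ${_\psi{\rm D}_{a,x}^{\mu}}$ (resp.\ ${_\psi{\rm D}_{x,b}^{\mu}}$) to the integration identities of Theorem \ref{thmIntMJFs}, read with the shifted parameters $(\alpha+\mu,\beta-\mu)$, and then cancels the inner fractional integral via the left-inverse property in Lemma \ref{lmm-prp-Inv-RL}; the hypothesis $\beta-\mu>-1$ is exactly what Theorem \ref{thmIntMJFs} requires of its second parameter. You instead rerun the change-of-variables argument that the paper used to prove Theorem \ref{thmIntMJFs} itself, substituting the classical fractional differentiation identity for $(1+s)^\beta P_n^{\alpha,\beta}(s)$ on $[-1,1]$ (the companion of (2.33) in \cite{CheSW16}) for the integration identity; your bookkeeping of the powers of $\kappa$ is right and the prefactors cancel as claimed. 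The trade-off you point out is real: the paper's route is shorter given what is already established, but as written it invokes ${_\psi{\rm D}_{a,x}^{\mu}}\circ{_\psi{\rm D}_{a,x}^{-\mu}}=\mathrm{Id}$ from a lemma quoted only for $f\in C[a,b]$, whereas for $\beta-\mu\in(-1,0)$ the relevant function has an integrable endpoint singularity at $x=a$ (harmless, since the left-inverse identity holds for $f\in L^1_\psi$, but strictly outside the quoted hypothesis). Your direct reduction imports one additional classical formula but covers the full range $\beta-\mu>-1$ without that caveat. One small imprecision: the condition $\beta-\mu>-1$ is not needed for $J_{n,\psi}^{\alpha+\mu,\beta-\mu}$ to be well defined (the paper defines Jacobi polynomials for arbitrary real parameters); it is the natural validity range of the classical identity you cite.
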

\begin{proof}~ By acting operators  ${_\psi{\rm D}_{a,x}^{\mu}}$ and ${_\psi{\rm D}_{x,b}^{\mu}}$ on the equalities in Theorem \ref{thmIntMJFs}, the desired results can be obtained from Lemma \ref{lmm-prp-Inv-RL}.
\end{proof}

The following orthogonality is also valid:
\begin{theorem}
 For $\alpha+\mu>-1, \beta-\mu>-1$,
\begin{equation}\label{orth-fd-mjofs-l}
   \int_{a}^{b}{_\psi{\rm D}_{a,x}^{\mu}}\left[(\psi(x)-\psi_a)^\beta J_{n,\psi}^{\alpha,\beta}(x)\right]
     {_\psi{\rm D}_{a,x}^{\mu}}\left[(\psi(x)-\psi_a)^\beta J_{m,\psi}^{\alpha,\beta}(x)\right]\varpi^{\alpha+\mu,-\beta+\mu}(x)dx=
\tilde{\gamma}_{n,\mu}^{\alpha,\beta}\delta_{mn}.
\end{equation}
And for $\alpha-\mu>-1,\beta+\mu>-1$,
\begin{equation}\label{orth-fd-mjofs-r}
   \int_{a}^{b}{_\psi{\rm D}_{x,b}^{\mu}}\left[(\psi_b-\psi(x))^\alpha J_{n,\psi}^{\alpha,\beta}(x)\right]
      {_\psi{\rm D}_{x,b}^{\mu}}\left[(\psi_b-\psi(x))^\alpha J_{m,\psi}^{\alpha,\beta}(x)\right]\varpi^{-\alpha+\mu,\beta+\mu}(x)dx=
\tilde{\gamma}_{n,\mu}^{\beta,\alpha}\delta_{mn},
\end{equation}
where
$$\tilde{\gamma}_{n,\mu}^{\alpha,\beta}=\frac{\kappa^{2(\mu-\beta)}2^{\alpha+\beta+1}
\Gamma^2(n+\beta+1)\Gamma(n+\alpha+\mu+1)}{(2n+\alpha+\beta+1)n!\Gamma(n+\alpha+\beta+1)
\Gamma(n+\beta-\mu+1)}.$$
\end{theorem}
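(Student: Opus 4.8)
The plan is to reduce the asserted orthogonality directly to the known orthogonality of the MJFs in \eqref{P3-MJFs}, using the explicit differentiation formula of Theorem~\ref{thm-fd-mjfs} to rewrite each factor of the integrand in closed form. Since this eliminates all fractional operators, no genuinely new analytic input is required; the work is entirely in the algebra of weights and normalization constants.

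First I would apply Theorem~\ref{thm-fd-mjfs}, which is legitimate because $\alpha+\mu>-1$ and $\beta-\mu>-1$, to both factors of the integrand in \eqref{orth-fd-mjofs-l}. This replaces ${_\psi{\rm D}_{a,x}^{\mu}}[(\psi(x)-\psi_a)^\beta J_{n,\psi}^{\alpha,\beta}(x)]$ by $\tfrac{\Gamma(n+\beta+1)}{\Gamma(n+\beta-\mu+1)}(\psi(x)-\psi_a)^{\beta-\mu}J_{n,\psi}^{\alpha+\mu,\beta-\mu}(x)$, and similarly for index $m$. Pulling the two Gamma-ratios outside the integral leaves
\[
\int_a^b J_{n,\psi}^{\alpha+\mu,\beta-\mu}(x)\,J_{m,\psi}^{\alpha+\mu,\beta-\mu}(x)\,(\psi(x)-\psi_a)^{2(\beta-\mu)}\varpi^{\alpha+\mu,-\beta+\mu}(x)\,{\rm d}x.
\]

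The key step is to recognize the combined weight as a scalar multiple of the genuine orthogonality weight $\varpi^{\alpha+\mu,\beta-\mu}$. Using $\varpi^{p,q}(x)=\kappa^{p+q+1}(\psi_b-\psi(x))^p(\psi(x)-\psi_a)^q\psi'(x)$, I would match the exponent of $(\psi(x)-\psi_a)$ (namely $2(\beta-\mu)+(-\beta+\mu)=\beta-\mu$) and extract the surplus factor $\kappa^{2(\mu-\beta)}$, obtaining $(\psi(x)-\psi_a)^{2(\beta-\mu)}\varpi^{\alpha+\mu,-\beta+\mu}(x)=\kappa^{2(\mu-\beta)}\varpi^{\alpha+\mu,\beta-\mu}(x)$. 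Then \eqref{P3-MJFs} applies verbatim (its hypotheses $\alpha+\mu>-1,\beta-\mu>-1$ hold), yielding $\delta_{mn}$ and, when $m=n$, the constant $\gamma_n^{\alpha+\mu,\beta-\mu}$. Substituting the explicit value of $\gamma_n^{\alpha+\mu,\beta-\mu}$ from the formula following \eqref{Jacobi-orth}, the factor $\Gamma(n+\beta-\mu+1)$ it contains cancels against one power from the Gamma-ratios, and the whole coefficient collapses to $\tilde{\gamma}_{n,\mu}^{\alpha,\beta}$.

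The right-sided identity \eqref{orth-fd-mjofs-r} follows by the same argument, now invoking the second formula of Theorem~\ref{thm-fd-mjfs} with the endpoint $(\psi_b-\psi(x))$ in place of $(\psi(x)-\psi_a)$; the roles of $\alpha$ and $\beta$ are interchanged, which is why the constant appears as $\tilde{\gamma}_{n,\mu}^{\beta,\alpha}$. The only place demanding care, and the sole potential source of error, is the exponent bookkeeping in the weight reduction and the final Gamma-function cancellation; there is no analytic obstacle, since everything rests on the already-established MJF orthogonality.
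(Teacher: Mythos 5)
Your proposal is correct and is precisely the paper's own argument: the paper proves this theorem in one line by citing Theorem~\ref{thm-fd-mjfs} together with the orthogonality \eqref{P3-MJFs}, and your weight bookkeeping (the factor $\kappa^{2(\mu-\beta)}$ and the cancellation of $\Gamma(n+\beta-\mu+1)$ against $\gamma_n^{\alpha+\mu,\beta-\mu}$) checks out and reproduces $\tilde{\gamma}_{n,\mu}^{\alpha,\beta}$ exactly.
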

\begin{proof}~ The result is derived directly from (\ref{P3-MJFs}) and Theorem \ref{thm-fd-mjfs}.
\end{proof}

Another attractive property of MJFs is that they are eigenfunctions of fractional
Sturm-Liouville-type equations. Let $w^{\alpha,\beta}(x)=(\psi_b-\psi(x))^\alpha(\psi(x)-\psi_a)^\beta$.
To show this, we define the fractional Sturm-Liouville-type operators:
\begin{equation}\label{sturm-liouvill-operators}
  \begin{split}
     ^{+}\mathcal{L}_{\alpha,\beta}^{2\mu,\psi} u(x)&:= w^{0,-\beta}(x)
{_\psi{\rm D}_{a,x}^{\mu}}\left\{w^{-\alpha+\mu,\beta+\mu}(x){_\psi{\rm D}_{x,b}^{\mu}}
w^{\alpha,0}(x) u(x)\right\},\\
      ^{-}\mathcal{L}_{\alpha,\beta}^{2\mu,\psi} u(x)&:= w^{-\alpha,0}(x)
{_\psi{\rm D}_{x,b}^{\mu}}\left\{w^{\alpha+\mu,-\beta+\mu}(x){_\psi{\rm D}_{a,x}^{\mu}}
w^{0,\beta}(x) u(x)\right\}.
  \end{split}
\end{equation}
\begin{theorem}\label{thm-sturm-liouv-1}Let $n\in\mathbb{N}^+$.
For $\alpha>\mu-1,\beta>-1$,
$$ ^{+}\mathcal{L}_{\alpha,\beta}^{2\mu,\psi} J_{n,\psi}^{\alpha,\beta}(x)=\lambda_{n,\mu}^{\alpha,\beta}J_{n,\psi}^{\alpha,\beta}(x),$$
and for $\alpha>-1,\beta>\mu-1$,
$$ ^{-}\mathcal{L}_{\alpha,\beta}^{2\mu,\psi} J_{n,\psi}^{\alpha,\beta}(x)=\lambda_{n,\mu}^{\beta,\alpha}J_{n,\psi}^{\alpha,\beta}(x),$$
where
$$\lambda_{n,\mu}^{\beta,\alpha}=\frac{\Gamma(n+\alpha+1)\Gamma(n+\beta+\mu+1)}
{\Gamma(n+\alpha-\mu+1)\Gamma(n+\beta+1)}.$$
\end{theorem}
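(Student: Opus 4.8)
The plan is to evaluate each operator in (\ref{sturm-liouvill-operators}) by peeling off its two fractional derivatives from the inside out, invoking Theorem~\ref{thm-fd-mjfs} at each step. The whole point is that the multiplicative weights $w^{\cdot,\cdot}$ are reverse-engineered so that the power of $(\psi_b-\psi(x))$ or $(\psi(x)-\psi_a)$ produced by one fractional derivative is cancelled by the next weight, while the Jacobi parameters telescope back to $(\alpha,\beta)$; the two Gamma-function prefactors then accumulate into the eigenvalue.

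For ${}^{+}\mathcal{L}_{\alpha,\beta}^{2\mu,\psi}$ I would start from $w^{\alpha,0}(x)J_{n,\psi}^{\alpha,\beta}(x)=(\psi_b-\psi(x))^{\alpha}J_{n,\psi}^{\alpha,\beta}(x)$ and apply the right-sided identity of Theorem~\ref{thm-fd-mjfs}, which is legitimate because $\alpha>\mu-1$ gives $\alpha-\mu>-1$; this produces $\tfrac{\Gamma(n+\alpha+1)}{\Gamma(n+\alpha-\mu+1)}(\psi_b-\psi(x))^{\alpha-\mu}J_{n,\psi}^{\alpha-\mu,\beta+\mu}(x)$. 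Multiplying by $w^{-\alpha+\mu,\beta+\mu}(x)$ cancels the factor $(\psi_b-\psi(x))^{\alpha-\mu}$ exactly and leaves $(\psi(x)-\psi_a)^{\beta+\mu}J_{n,\psi}^{\alpha-\mu,\beta+\mu}(x)$ up to that constant. Because the surviving power $\beta+\mu$ equals the second Jacobi parameter, the left-sided identity of Theorem~\ref{thm-fd-mjfs} now applies (its hypothesis reads $(\beta+\mu)-\mu=\beta>-1$), restoring the parameters to $(\alpha,\beta)$ and yielding $\tfrac{\Gamma(n+\beta+\mu+1)}{\Gamma(n+\beta+1)}(\psi(x)-\psi_a)^{\beta}J_{n,\psi}^{\alpha,\beta}(x)$. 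A final multiplication by $w^{0,-\beta}(x)=(\psi(x)-\psi_a)^{-\beta}$ removes the last power, and the two Gamma ratios combine to give $\tfrac{\Gamma(n+\alpha+1)\Gamma(n+\beta+\mu+1)}{\Gamma(n+\alpha-\mu+1)\Gamma(n+\beta+1)}J_{n,\psi}^{\alpha,\beta}(x)$, which is the asserted eigenrelation.

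The identity for ${}^{-}\mathcal{L}_{\alpha,\beta}^{2\mu,\psi}$ follows from the mirror-image computation: begin with $w^{0,\beta}(x)J_{n,\psi}^{\alpha,\beta}(x)=(\psi(x)-\psi_a)^{\beta}J_{n,\psi}^{\alpha,\beta}(x)$, apply the left-sided identity (valid since $\beta>\mu-1$), multiply by $w^{\alpha+\mu,-\beta+\mu}(x)$ to cancel the $(\psi(x)-\psi_a)$-power, then apply the right-sided identity (valid since $\alpha>-1$), and finally multiply by $w^{-\alpha,0}(x)$. This produces the eigenvalue $\tfrac{\Gamma(n+\beta+1)\Gamma(n+\alpha+\mu+1)}{\Gamma(n+\beta-\mu+1)\Gamma(n+\alpha+1)}$, the roles of $\alpha$ and $\beta$ being interchanged relative to the first operator.

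I do not anticipate a deep obstacle: the content is entirely the telescoping bookkeeping of exponents and parameters. The one place demanding care is to verify, at each of the two fractional-derivative steps, that the power multiplying the MJF is exactly the Jacobi parameter required by Theorem~\ref{thm-fd-mjfs} and that the intermediate parameter stays above $-1$ so that the closed-form evaluation is valid. The parameter restrictions $\alpha>\mu-1,\ \beta>-1$ (respectively $\alpha>-1,\ \beta>\mu-1$) are precisely what guarantee these two hypotheses hold, which is why they appear in the statement.
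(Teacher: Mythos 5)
Your proposal is correct and is exactly the computation the paper intends: the paper's proof consists of the single sentence that the result follows directly from Theorem \ref{thm-fd-mjfs}, and you have carried out the telescoping of weights, Jacobi parameters, and Gamma prefactors that this entails, with the parameter restrictions verified at precisely the two places they are needed. (Your explicit answers also expose a superscript slip in the paper's statement: the eigenvalue you obtain for $^{+}\mathcal{L}$ is the displayed expression, which the paper labels $\lambda_{n,\mu}^{\beta,\alpha}$, while $^{-}\mathcal{L}$ yields that expression with $\alpha$ and $\beta$ interchanged, so the two superscript labels in the eigenrelations should be swapped relative to the printed formula.)
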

\begin{proof}~ The desired results can be derived directly from Theorem \ref{thm-fd-mjfs}.
\end{proof}
\subsection{Approximation by the MJFs}\label{subsec:3.3}
\subsubsection{Projection approximation}\label{subsubsec:3.3.1}
We introduce the $(N+1)$-dimensional space of the MJFs as:
\begin{equation*}
  F^{\psi}_N:=\mbox{span}
\left\{J_{0,\psi}^{\alpha,\beta},J_{1,\psi}^{\alpha,\beta},\cdots,
J_{N,\psi}^{\alpha,\beta}\right\}.
\end{equation*}
It is clear that the above space of the MJFs is consistent with the space $\mbox{span}\left\{\left(\psi(x)\right)^i, i=0,1,\cdots,N\right\}$ in Theorem \ref{Gauss-MJF-quad}, so we use the same notation for it.

Let $\alpha,\beta>-1$ and $\varpi^{\alpha,\beta}(x)=\kappa^{\alpha+\beta+1} \left(\psi_b-\psi(x)\right)^\alpha
\left(\psi(x)-\psi_a\right)^\beta\psi'(x).$ For any function $u\in L^2_{\varpi^{\alpha,\beta}}(a,b)$,
denote $\mathbf{P}_N^{\alpha,\beta}: L^2_{\varpi^{\alpha,\beta}}(a,b)\rightarrow F_N^{\psi}$ the orthogonal projection such that
\begin{equation}\label{def_project_op}
  (u-\mathbf{P}_N^{\alpha,\beta} u,v)_{\varpi^{\alpha,\beta}}=0,\quad \forall v\in F_N^{\psi}.
\end{equation}

We reprensent the orthogonal projection $\mathbf{P}_N^{\alpha,\beta}$ as
\begin{equation}\label{proj_express}
  \mathbf{P}_N^{\alpha,\beta} u(x)=\sum_{k=0}^N \widehat{u}_k J_{k,\psi}^{\alpha,\beta}(x),\quad
\widehat{u}_k=\frac{\int_{a}^b u(x) J_{k,\psi}^{\alpha,\beta}(x)
\varpi^{\alpha,\beta}(x){\rm d}x}
{\|J_{k,\psi}^{\alpha,\beta}\|^2_{\varpi^{\alpha,\beta}}}
\end{equation}
with the truncated error
\begin{equation}\label{err-proj}
  u(x)-\mathbf{P}_N^{\alpha,\beta} u(x)=\sum_{k=N+1}^\infty \widehat{u}_k J_{k,\psi}^{\alpha,\beta}(x).
\end{equation}

To better describe the projection error $u(x)-\mathbf{P}_N^{\alpha,\beta}u(x)$, we need the non-uniformly mapped Jacobi-weighted Sobolev space:
\begin{equation*}
  B^m_{\alpha,\beta}(a,b):=\left\{v: {\delta}^{k}_{\psi} v\in L^2_{\varpi^{\alpha+k,\beta+k}}(a,b),\quad 0\leq k\leq m\right\}
\end{equation*}
equipped with the semi-norm and norm
\begin{equation*}
  |u|_{B^m_{\alpha,\beta}}=\|{\delta}^{m}_{\psi} u\|_{\varpi^{\alpha+m,\beta+m}}, \quad  \|u\|_{B^m_{\alpha,\beta}}=\sum_{k=0}^m\left(|u|^2_{{B^k_{\alpha,\beta}}}\right)^{1/2},
\end{equation*}
where $\|u\|_{\varpi^{\alpha,\beta}}=\left(\int_a^b|u|^2\varpi^{\alpha,\beta}(x){\rm d}x\right)^{1/2}.$
\begin{theorem}\label{err-proj-mjlofs}Let $\alpha,\beta>-1$ and $k,m,N\in\mathbb{N}$. For any function $u\in B^m_{{\alpha,\beta}}(a,b)$ and $0\leq m\leq \widetilde{m}=\min\{m,N+1\}$, we have the following estimate
\begin{equation*}
  \|{\delta}^{k}_{\psi}(u-\mathbf{P}_N^{\alpha,\beta} u)\|_{\varpi^{\alpha+k,\beta+k}}\lesssim
N^{k-\widetilde{m}}
\|{\delta}_{\psi}^{\widetilde{m},\psi}u\|_{\varpi^{\alpha+\widetilde{m},\beta+\widetilde{m}}}.
\end{equation*}
\end{theorem}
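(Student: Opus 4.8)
The plan is to reduce the estimate to the corresponding classical Jacobi projection estimate on the reference interval $I=[-1,1]$ by pulling everything back through the mapping \eqref{map_all}. Write $\tilde{u}(s):=u(x(s))$ with $x(s)=\psi^{-1}((s+1)/\kappa+\psi_a)$, and let $\pi_N^{\alpha,\beta}$ denote the standard $L^2_{\omega^{\alpha,\beta}}(I)$-orthogonal projection onto $\mathbb{P}_N(I)$. The whole argument rests on three transformation identities: the weighted measures match, the operator $\delta^k_\psi$ becomes a constant multiple of $\partial_s^k$, and the MJF projection coincides with the classical Jacobi projection after pulling back.

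First I would record the measure identity. From \eqref{map_all} one has $\psi(x)-\psi_a=(1+s)/\kappa$ and $\psi_b-\psi(x)=(1-s)/\kappa$, while \eqref{tranf} gives $\psi'(x)\,{\rm d}x={\rm d}s/\kappa$. Substituting these into the definition of $\varpi^{\alpha,\beta}$ yields
\begin{equation*}
\varpi^{\alpha,\beta}(x)\,{\rm d}x=\kappa^{\alpha+\beta+1}\left(\frac{1-s}{\kappa}\right)^{\alpha}\left(\frac{1+s}{\kappa}\right)^{\beta}\frac{{\rm d}s}{\kappa}=\omega^{\alpha,\beta}(s)\,{\rm d}s,
\end{equation*}
and the same computation with $(\alpha,\beta)$ replaced by $(\alpha+k,\beta+k)$ gives $\varpi^{\alpha+k,\beta+k}(x)\,{\rm d}x=\omega^{\alpha+k,\beta+k}(s)\,{\rm d}s$. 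Next, differentiating $u(x)=\tilde{u}(s(x))$ and using $s'(x)=\kappa\psi'(x)$ shows $\delta^1_\psi u(x)=\kappa\,\partial_s\tilde{u}(s)$, hence by iteration $\delta^k_\psi u(x)=\kappa^k\partial_s^k\tilde{u}(s)$. Finally, since $v\in F_N^\psi$ if and only if $\tilde{v}\in\mathbb{P}_N(I)$ and the inner product $(\cdot,\cdot)_{\varpi^{\alpha,\beta}}$ transforms to $(\cdot,\cdot)_{\omega^{\alpha,\beta}}$, the defining relation \eqref{def_project_op} becomes exactly the defining relation of $\pi_N^{\alpha,\beta}$; by uniqueness of the orthogonal projection the pullback of $\mathbf{P}_N^{\alpha,\beta}u$ equals $\pi_N^{\alpha,\beta}\tilde{u}$.

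Combining these, both norms in the statement convert cleanly:
\begin{equation*}
\|\delta^k_\psi(u-\mathbf{P}_N^{\alpha,\beta}u)\|_{\varpi^{\alpha+k,\beta+k}}=\kappa^k\|\partial_s^k(\tilde{u}-\pi_N^{\alpha,\beta}\tilde{u})\|_{\omega^{\alpha+k,\beta+k}},\quad \|\delta^{\widetilde{m}}_\psi u\|_{\varpi^{\alpha+\widetilde{m},\beta+\widetilde{m}}}=\kappa^{\widetilde{m}}\|\partial_s^{\widetilde{m}}\tilde{u}\|_{\omega^{\alpha+\widetilde{m},\beta+\widetilde{m}}}.
\end{equation*}
Now I would invoke the classical Jacobi projection estimate (e.g. \cite{SheTW11}), namely $\|\partial_s^k(\tilde{u}-\pi_N^{\alpha,\beta}\tilde{u})\|_{\omega^{\alpha+k,\beta+k}}\lesssim N^{k-\widetilde{m}}\|\partial_s^{\widetilde{m}}\tilde{u}\|_{\omega^{\alpha+\widetilde{m},\beta+\widetilde{m}}}$ for $0\le k\le\widetilde{m}\le N+1$, and substitute. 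The surviving factor $\kappa^{k-\widetilde{m}}$ is a fixed constant and is absorbed into $\lesssim$, giving the claim.

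The main obstacle is not the mapping bookkeeping but the classical estimate itself; if a self-contained proof is wanted rather than a citation, I would instead expand $u=\sum_n\widehat{u}_nJ_{n,\psi}^{\alpha,\beta}$, apply \eqref{P2-MJFs-11} termwise so that $\delta^k_\psi(u-\mathbf{P}_N^{\alpha,\beta}u)=\sum_{n>N}\widehat{u}_n d_{n,k}^{\alpha,\beta}J_{n-k,\psi}^{\alpha+k,\beta+k}$, and use the orthogonality \eqref{P3-MJFs} to reduce both sides to weighted sums of $|\widehat{u}_n|^2$. The crux then becomes the asymptotic bound
\begin{equation*}
\frac{(d_{n,k}^{\alpha,\beta})^2\,\gamma_{n-k}^{\alpha+k,\beta+k}}{(d_{n,\widetilde{m}}^{\alpha,\beta})^2\,\gamma_{n-\widetilde{m}}^{\alpha+\widetilde{m},\beta+\widetilde{m}}}\lesssim N^{2(k-\widetilde{m})},\quad n>N,
\end{equation*}
which is a ratio of Gamma functions handled by standard Gamma-quotient (Stirling) estimates. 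This Gamma-ratio step is where all the quantitative work sits.
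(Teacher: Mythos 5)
Your proposal is correct, and in fact it contains the paper's own proof as its ``fallback'': the authors argue exactly as in your last paragraph, expanding $u=\sum_n\widehat{u}_nJ_{n,\psi}^{\alpha,\beta}$, applying the derivative identity (\ref{P2-MJFs-11}) termwise, using the orthogonality (\ref{P3-MJFs}) to reduce the squared norms to weighted sums of $|\widehat{u}_n|^2$, and bounding the ratio $(d_{j,k}^{\alpha,\beta})^2\gamma_{j-k}^{\alpha+k,\beta+k}\big/\bigl((d_{j,\widetilde{m}}^{\alpha,\beta})^2\gamma_{j-\widetilde{m}}^{\alpha+\widetilde{m},\beta+\widetilde{m}}\bigr)$ by the Gamma-quotient estimate $\Gamma(n+\nu)/\Gamma(n+\vartheta)\leq cn^{\nu-\vartheta}$ (cited from \cite{ZhaWX13}), which is precisely the ``crux'' you identify. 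Your primary route --- pulling back through the map (\ref{map_all}) and citing the classical Jacobi projection theorem of \cite{SheTW11} --- is a genuinely cleaner packaging of the same content: the three transformation identities you record (measure, $\delta_\psi^k=\kappa^k\partial_s^k$, and commutation of the projection with the pullback) are all correct, and the factor $\kappa^{k-\widetilde{m}}$ is indeed harmless. What the mapping reduction buys is brevity and the reuse of a known theorem; what the paper's direct computation buys is self-containedness and explicit constants in terms of $\kappa$ and the Gamma ratios, which the authors reuse in later arguments (e.g.\ Theorems \ref{thm-err-est-proj-fd} and \ref{err-est-L22pj}). Either version is acceptable; the only detail worth making explicit in your primary route is that the estimate requires $0\leq k\leq\widetilde{m}$, which is the condition actually used (the hypothesis as printed in the theorem statement reads $0\leq m\leq\widetilde{m}$, an apparent typo in the paper).
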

\begin{proof}~From (\ref{P2-MJFs-11}),
for $u\in L^2_{\varpi^{\alpha,\beta}}(a,b), u(x)=\sum_{j=0}^\infty \widehat{u}_j J_{j,\psi}^{\alpha,\beta}(x)$, we have
$${\delta}^{k}_{\psi} u(x)=\sum_{j=k}^\infty \widehat{u}_jd_{j,k}^{\alpha,\beta}
J_{j-k,\psi}^{\alpha+k,\beta+k}(x).$$
Then
\begin{equation*}
  \begin{split}
  &\|{\delta}^{k}_{\psi}(u-\mathbf{P}_N^{\alpha,\beta} u)\|^2_{\varpi^{\alpha+k,\beta+k}}=\sum_{j=N+1}^\infty |\widehat{u}_j|^2
\left(d_{j,k}^{\alpha,\beta}\right)^2\gamma_{j-k}^{\alpha+k,\beta+k}\\
=&\sum_{j=N+1}^\infty |\widehat{u}_j|^2 \frac{\left(d_{j,k}^{\alpha,\beta}\right)^2\gamma_{j-k}^{\alpha+k,\beta+k}}
{\left(d_{j,\widetilde{m}}^{\alpha,\beta}\right)^2\gamma_{j-\widetilde{m}}^{\alpha+\widetilde{m},\beta+\widetilde{m}}}
\left(d_{j,\widetilde{m}}^{\alpha,\beta}\right)^2\gamma_{j-\widetilde{m}}^{\alpha+\widetilde{m},\beta+\widetilde{m}}\\
\leq&\max\left\{\frac{\left(d_{j,k}^{\alpha,\beta}\right)^2\gamma_{j-k}^{\alpha+k,\beta+k}}
{\left(d_{j,\widetilde{m}}^{\alpha,\beta}\right)^2
\gamma_{j-\widetilde{m}}^{\alpha+\widetilde{m},\beta+\widetilde{m}}}\right\}
\sum_{j=N+1}^\infty |\widehat{u}_j|^2 \left(d_{j,\widetilde{m}}^{\alpha,\beta}\right)^2 \gamma_{j-\widetilde{m}}^{\alpha+\widetilde{m},\beta+\widetilde{m}}\\
\leq& \kappa^{2(k-\widetilde{m})}
\frac{\Gamma(N-\widetilde{m}+2)\Gamma(N+k+\alpha+\beta+2)}
{\Gamma(N-k+2)\Gamma(N+\widetilde{m}+\alpha+\beta+2)}
\|{\delta}^{\widetilde{m}}_{\psi}u\|^2_{\varpi^{\alpha+\widetilde{m},\beta+\widetilde{m}}}.
\end{split}
\end{equation*}
Making use of $\frac{\Gamma(n+\nu)}{\Gamma(n+\vartheta)}\leq c n^{\nu-\vartheta}$ (see \cite{ZhaWX13}) gives the desired result.
\end{proof}

With aid of Theorem \ref{thm-fd-mjfs}, we can derive the error estimate involving fractional derivative. Let
\begin{equation*}
  \widetilde{B}^\nu_{\alpha,\beta}(a,b):=\left\{v: {_\psi{\rm D}_{a,x}^{\mu}} v\in L^2_{\varpi^{\alpha+\mu,-\beta+\mu}}(a,b),\quad 0\leq \mu\leq \nu\right\}.
\end{equation*}
\begin{theorem}\label{thm-err-est-proj-fd}Let $u\in \widetilde{B}^\nu_{\alpha,0}(a,b)$. For $0<\mu\leq\nu<1$, there holds
\begin{equation}\label{thm-est-proj-fd}
  \|{_\psi{\rm D}_{a,x}^{\mu}}(u-\mathbf{P}_N^{\alpha,0}u)\|_{\varpi^{\alpha+\mu,\mu}}\lesssim
N^{\mu-\nu} \|{_\psi{\rm D}_{a,x}^{\nu}}u\|_{\varpi^{\alpha+\nu,\nu}}.
\end{equation}
\end{theorem}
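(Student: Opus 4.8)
The plan is to follow the template of Theorem \ref{err-proj-mjlofs}, replacing the integer-order relation \eqref{P2-MJFs-11} and the orthogonality \eqref{P3-MJFs} by their fractional analogues. First I would expand the projection error through \eqref{err-proj} as $u-\mathbf{P}_N^{\alpha,0}u=\sum_{k=N+1}^{\infty}\widehat{u}_k J_{k,\psi}^{\alpha,0}$ and apply ${_\psi{\rm D}_{a,x}^{\mu}}$ term by term. Since $\beta=0$ and $0<\mu<1$ satisfy $\beta-\mu>-1$, Theorem \ref{thm-fd-mjfs} gives
\begin{equation*}
  {_\psi{\rm D}_{a,x}^{\mu}}\bigl(u-\mathbf{P}_N^{\alpha,0}u\bigr)=\sum_{k=N+1}^{\infty}\widehat{u}_k\,\frac{\Gamma(k+1)}{\Gamma(k-\mu+1)}(\psi(x)-\psi_a)^{-\mu}J_{k,\psi}^{\alpha+\mu,-\mu}(x),
\end{equation*}
and the same identity with $\mu$ replaced by $\nu$ represents ${_\psi{\rm D}_{a,x}^{\nu}}u$, the sum now running from $k=0$.

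The second step exploits the fractional orthogonality \eqref{orth-fd-mjofs-l}, taken with $\beta=0$, which asserts that the functions ${_\psi{\rm D}_{a,x}^{\mu}}J_{k,\psi}^{\alpha,0}$ are mutually orthogonal in $L^2_{\varpi^{\alpha+\mu,\mu}}(a,b)$ with squared norms $\tilde{\gamma}_{k,\mu}^{\alpha,0}$ (the required constraints $\alpha+\mu>-1$, $\mu>-1$ hold because $\alpha>-1$ and $\mu\in(0,1)$, and likewise for $\nu$). Both sides of the claimed estimate therefore diagonalize:
\begin{equation*}
  \|{_\psi{\rm D}_{a,x}^{\mu}}(u-\mathbf{P}_N^{\alpha,0}u)\|^2_{\varpi^{\alpha+\mu,\mu}}=\sum_{k=N+1}^{\infty}|\widehat{u}_k|^2\,\tilde{\gamma}_{k,\mu}^{\alpha,0},\qquad \|{_\psi{\rm D}_{a,x}^{\nu}}u\|^2_{\varpi^{\alpha+\nu,\nu}}=\sum_{k=0}^{\infty}|\widehat{u}_k|^2\,\tilde{\gamma}_{k,\nu}^{\alpha,0}.
\end{equation*}
I would then factor out the ratio of the normalization constants and bound the tail sum by its largest coefficient,
\begin{equation*}
  \sum_{k=N+1}^{\infty}|\widehat{u}_k|^2\,\tilde{\gamma}_{k,\mu}^{\alpha,0}\le\Bigl(\max_{k\ge N+1}\frac{\tilde{\gamma}_{k,\mu}^{\alpha,0}}{\tilde{\gamma}_{k,\nu}^{\alpha,0}}\Bigr)\sum_{k=N+1}^{\infty}|\widehat{u}_k|^2\,\tilde{\gamma}_{k,\nu}^{\alpha,0}\le\Bigl(\max_{k\ge N+1}\frac{\tilde{\gamma}_{k,\mu}^{\alpha,0}}{\tilde{\gamma}_{k,\nu}^{\alpha,0}}\Bigr)\|{_\psi{\rm D}_{a,x}^{\nu}}u\|^2_{\varpi^{\alpha+\nu,\nu}}.
\end{equation*}

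For the final step I would insert the explicit form of $\tilde{\gamma}_{k,\mu}^{\alpha,0}$ to obtain
\begin{equation*}
  \frac{\tilde{\gamma}_{k,\mu}^{\alpha,0}}{\tilde{\gamma}_{k,\nu}^{\alpha,0}}=\kappa^{2(\mu-\nu)}\frac{\Gamma(k+\alpha+\mu+1)\,\Gamma(k-\nu+1)}{\Gamma(k+\alpha+\nu+1)\,\Gamma(k-\mu+1)},
\end{equation*}
and apply the asymptotic $\Gamma(k+\nu)/\Gamma(k+\vartheta)\le c\,k^{\nu-\vartheta}$ already used in Theorem \ref{err-proj-mjlofs} to each of the two gamma quotients, giving $\tilde{\gamma}_{k,\mu}^{\alpha,0}/\tilde{\gamma}_{k,\nu}^{\alpha,0}\lesssim k^{2(\mu-\nu)}$. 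Because $\mu\le\nu$ this bound is monotonically decreasing in $k$, so its maximum over $k\ge N+1$ is $\lesssim N^{2(\mu-\nu)}$; taking square roots then yields \eqref{thm-est-proj-fd}. The step I expect to demand the most care is the rigorous justification that ${_\psi{\rm D}_{a,x}^{\mu}}$ acts term by term on the infinite series with convergence in $L^2_{\varpi^{\alpha+\mu,\mu}}$, so that the diagonalization is legitimate; here the hypothesis $u\in\widetilde{B}^\nu_{\alpha,0}(a,b)$, which guarantees ${_\psi{\rm D}_{a,x}^{\nu}}u\in L^2_{\varpi^{\alpha+\nu,\nu}}$, is exactly the finiteness needed to close the argument. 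The remainder is the routine two-quotient gamma-ratio estimate, a verbatim fractional analogue of the integer-order computation.
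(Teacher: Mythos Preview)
Your proposal is correct and is essentially the paper's own proof: both expand the tail via \eqref{err-proj}, apply Theorem~\ref{thm-fd-mjfs} term by term, use the orthogonality of $(\psi(x)-\psi_a)^{-\mu}J_{k,\psi}^{\alpha+\mu,-\mu}$ in $L^2_{\varpi^{\alpha+\mu,\mu}}$ to diagonalize, and then bound the resulting gamma-ratio $\kappa^{2(\mu-\nu)}\dfrac{\Gamma(k-\nu+1)\Gamma(k+\alpha+\mu+1)}{\Gamma(k-\mu+1)\Gamma(k+\alpha+\nu+1)}$ by $CN^{2(\mu-\nu)}$ at $k=N+1$. The only cosmetic difference is that you invoke the packaged constant $\tilde{\gamma}_{k,\mu}^{\alpha,0}$ from \eqref{orth-fd-mjofs-l}, whereas the paper writes the same quantity out as $\kappa^{2\mu}\dfrac{\Gamma^2(k+1)}{\Gamma^2(k-\mu+1)}\gamma_k^{\alpha+\mu,-\mu}$.
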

\begin{proof}~ From (\ref{err-proj}) and Theorem \ref{thm-fd-mjfs}, one has
\begin{equation*}
     {_\psi{\rm D}_{a,x}^{\mu}}(u-\mathbf{P}_N^{\alpha,0}u) =\sum_{k=N+1}^{\infty}
\widehat{u}_k\frac{\Gamma(k+1)}{\Gamma(k-\mu+1)} (\psi(x)-\psi_a)^{-\mu}
J_{k,\psi}^{\alpha+\mu,-\mu}(x).
\end{equation*}
Then
\begin{equation*}
  \begin{split}
   &\|{_\psi{\rm D}_{a,x}^{\mu}}(u-\mathbf{P}_N^{\alpha,0}u)\|^2_{\varpi^{\alpha+\mu,\mu}}
 =\sum_{k=N+1}^{\infty} |\widehat{u}_k|^2 \frac{\Gamma^2(k+1)}{\Gamma^2(k-\mu+1)} \kappa^{2\mu}\gamma_k^{\alpha+\mu,-\mu}\\
      = & \kappa^{2(\mu-\nu)}\sum_{k=N+1}^{\infty} \frac{\kappa^{2\nu}|\widehat{u}_k|^2\Gamma^2(k+1)}{\Gamma^2(k-\nu+1)} \gamma_k^{\alpha+\nu,-\nu}\frac{\Gamma(k-\nu+1)\Gamma(k+\alpha+\mu+1)}
{\Gamma(k-\mu+1)\Gamma(k+\alpha+\nu+1)}\\
\leq & \kappa^{2(\mu-\nu)}\frac{\Gamma(N-\nu+2)\Gamma(N+\alpha+\mu+2)}
{\Gamma(N-\mu+2)\Gamma(N+\alpha+\nu+2)}\sum_{k=N+1}^{\infty} \frac{\kappa^{2\nu}|\widehat{u}_k|^2\Gamma^2(k+1)}{\Gamma^2(k-\nu+1)} \gamma_k^{\alpha+\nu,-\nu}\\
\leq & C\kappa^{2(\mu-\nu)}N^{2(\mu-\nu)}
\|{_\psi{\rm D}_{a,x}^{\nu}}u\|^2_{\varpi^{\alpha+\nu,\nu}}.
  \end{split}
\end{equation*}
Then we complete the proof.
\end{proof}

Let $-1<\rho<1$ and
$$E_N^{\rho,\psi}:=(\psi(x)-\psi_a)^{-\rho}F_N^\psi=\left\{
(\psi(x)-\psi_a)^{-\rho}\phi | \phi\in F_N^{\psi}\right\}.$$
For $u\in L^2_{\varpi^{\rho,\rho}}(a,b)$, define a projection operator
$\widetilde{\mathbf{P}}_N^{\rho,\rho}:L^2_{\varpi^{\rho,\rho}}(a,b)\rightarrow E_N^{\rho,\psi}$ as
\begin{equation}\label{L22-projction}
  (u-\widetilde{\mathbf{P}}_N^{\rho,\rho}u, v)_{\varpi^{\rho,\rho}}=0, \quad \forall v\in
  E_N^{\rho,\psi}.
\end{equation}
Then we have the following error estimate.
\begin{theorem}\label{err-est-L22pj} Let $l,m\in\mathbb{N}^+, l\leq m ~\mbox{and}~ \rho>0$. If $u\in L^2_{\varpi^{\rho,\rho}}(a,b)$ and ${_\psi{\rm D}_{a,x}^{-\rho}}u\in B_{0,0}^{m}(a,b)$, then for $1\leq m\leq N+1$,
\begin{equation*}
  \|{_\psi{\rm D}_{a,x}^{l-\rho}}(u-\widetilde{\mathbf{P}}_N^{\rho,\rho}u)\|_{\varpi^{l,l}}\lesssim
N^{l-m} \|{_\psi{\rm D}_{a,x}^{m-\rho}}u\|_{\varpi^{m,m}}.
\end{equation*}
\end{theorem}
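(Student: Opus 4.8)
The plan is to reduce this weighted fractional estimate to the integer-order projection bound of Theorem \ref{err-proj-mjlofs} through the substitution $w:={_\psi{\rm D}_{a,x}^{-\rho}}u$. By hypothesis $w\in B^m_{0,0}(a,b)\subset L^2_{\varpi^{0,0}}(a,b)$, so I expand $w=\sum_{k=0}^{\infty}\widehat{w}_kJ_{k,\psi}^{0,0}$ and recover $u={_\psi{\rm D}_{a,x}^{\rho}}w$ from the left-inverse relation in Lemma \ref{lmm-prp-Inv-RL}. Writing $\phi_k(x):=(\psi(x)-\psi_a)^{-\rho}J_{k,\psi}^{\rho,-\rho}(x)$, Theorem \ref{thm-fd-mjfs} applied with $\alpha=\beta=0$ and $\mu=\rho$ (legitimate since $\beta-\mu=-\rho>-1$ for $0<\rho<1$) yields the single identity ${_\psi{\rm D}_{a,x}^{\rho}}J_{k,\psi}^{0,0}=\tfrac{\Gamma(k+1)}{\Gamma(k-\rho+1)}\phi_k$. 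Because each $J_{k,\psi}^{\rho,-\rho}$ is a polynomial in $\psi$ of degree $k$, we have $\phi_k\in E_N^{\rho,\psi}$ for $k\le N$ and $\{\phi_k\}_{k=0}^{N}$ is a basis of $E_N^{\rho,\psi}$; moreover $u=\sum_{k=0}^{\infty}\widehat{w}_k\tfrac{\Gamma(k+1)}{\Gamma(k-\rho+1)}\phi_k$.

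Next I show $\widetilde{\mathbf{P}}_N^{\rho,\rho}$ is merely truncation of this series. Peeling the factor $(\psi(x)-\psi_a)^{-2\rho}$ off $\varpi^{\rho,\rho}$ gives $\int_a^b\phi_k\phi_j\varpi^{\rho,\rho}{\rm d}x=\kappa^{2\rho}\int_a^bJ_{k,\psi}^{\rho,-\rho}J_{j,\psi}^{\rho,-\rho}\varpi^{\rho,-\rho}{\rm d}x=\kappa^{2\rho}\gamma_k^{\rho,-\rho}\delta_{kj}$ by the orthogonality (\ref{P3-MJFs}) (valid as $\rho,-\rho>-1$). Thus $\{\phi_k\}$ is $\varpi^{\rho,\rho}$-orthogonal, so the defining property (\ref{L22-projction}) forces $\widetilde{\mathbf{P}}_N^{\rho,\rho}u=\sum_{k=0}^{N}\widehat{w}_k\tfrac{\Gamma(k+1)}{\Gamma(k-\rho+1)}\phi_k={_\psi{\rm D}_{a,x}^{\rho}}\mathbf{P}_N^{0,0}w$; that is, $\widetilde{\mathbf{P}}_N^{\rho,\rho}$ intertwines with $\mathbf{P}_N^{0,0}$ via ${_\psi{\rm D}_{a,x}^{\rho}}$, and in particular $u-\widetilde{\mathbf{P}}_N^{\rho,\rho}u={_\psi{\rm D}_{a,x}^{\rho}}(w-\mathbf{P}_N^{0,0}w)$.

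The crux is the composition identity ${_\psi{\rm D}_{a,x}^{l-\rho}}\phi_k=\kappa^l\tfrac{\Gamma(k-\rho+1)\Gamma(k+l+1)}{2^l\Gamma^2(k+1)}J_{k-l,\psi}^{l,l}$, equivalently ${_\psi{\rm D}_{a,x}^{l-\rho}}{_\psi{\rm D}_{a,x}^{\rho}}J_{k,\psi}^{0,0}=d_{k,l}^{0,0}J_{k-l,\psi}^{l,l}={\delta}^{l}_{\psi}J_{k,\psi}^{0,0}$. Note Theorem \ref{thm-fd-mjfs} does \emph{not} apply here, since the relevant parameter is $\beta-\mu=-\rho-(l-\rho)=-l\le-1$; I therefore route through the map (\ref{map_all}). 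Since $l-\rho\in(l-1,l)$, I write ${_\psi{\rm D}_{a,x}^{l-\rho}}\phi_k=\kappa^l{\rm D}_{-1,s}^{l-\rho}[(1+s)^{-\rho}P_k^{\rho,-\rho}(s)]$ and split ${\rm D}_{-1,s}^{l-\rho}=\tfrac{{\rm d}^l}{{\rm d}s^l}\circ{\rm D}_{-1,s}^{-\rho}$; the fractional-integral formula underlying Theorem \ref{thmIntMJFs} collapses ${\rm D}_{-1,s}^{-\rho}[(1+s)^{-\rho}P_k^{\rho,-\rho}]=\tfrac{\Gamma(k-\rho+1)}{\Gamma(k+1)}P_k^{0,0}$, and the classical rule $\tfrac{{\rm d}^l}{{\rm d}s^l}P_k^{0,0}=\tfrac{\Gamma(k+l+1)}{2^l\Gamma(k+1)}P_{k-l}^{l,l}$ finishes the computation. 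Applying this termwise to the truncation tail gives ${_\psi{\rm D}_{a,x}^{l-\rho}}(u-\widetilde{\mathbf{P}}_N^{\rho,\rho}u)={\delta}^{l}_{\psi}(w-\mathbf{P}_N^{0,0}w)$, and the identical computation with $l$ replaced by $m$ yields ${_\psi{\rm D}_{a,x}^{m-\rho}}u={\delta}^{m}_{\psi}w$.

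Finally, these two identities reduce the claim to the integer-order case: Theorem \ref{err-proj-mjlofs} with $\alpha=\beta=0$, $k=l$ and $\widetilde{m}=\min\{m,N+1\}=m$ gives $\|{\delta}^{l}_{\psi}(w-\mathbf{P}_N^{0,0}w)\|_{\varpi^{l,l}}\lesssim N^{l-m}\|{\delta}^{m}_{\psi}w\|_{\varpi^{m,m}}$, which is exactly $\|{_\psi{\rm D}_{a,x}^{l-\rho}}(u-\widetilde{\mathbf{P}}_N^{\rho,\rho}u)\|_{\varpi^{l,l}}\lesssim N^{l-m}\|{_\psi{\rm D}_{a,x}^{m-\rho}}u\|_{\varpi^{m,m}}$. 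I expect the main obstacle to be twofold: establishing the composition identity precisely in the degenerate regime $\beta-\mu=-l\le-1$ (handled by the ${\rm D}_{-1,s}^{-\rho}$-then-$\tfrac{{\rm d}^l}{{\rm d}s^l}$ decomposition above rather than by Theorem \ref{thm-fd-mjfs}), and the rigorous justification of the termwise action of the fractional operators on the infinite series, i.e. convergence and commutation, together with the mild regularity needed to invoke Lemma \ref{lmm-prp-Inv-RL} for $u={_\psi{\rm D}_{a,x}^{\rho}}w$.
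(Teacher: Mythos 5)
Your proof is correct and follows essentially the same route as the paper's: both expand the error in the $\varpi^{\rho,\rho}$-orthogonal system $\{(\psi(x)-\psi_a)^{-\rho}J_{k,\psi}^{\rho,-\rho}\}$ and exploit the factorization ${_\psi{\rm D}_{a,x}^{l-\rho}}={\delta}^{l}_{\psi}\circ{_\psi{\rm D}_{a,x}^{-\rho}}$ together with Theorem \ref{thmIntMJFs} and the derivative relation (\ref{P2-MJFs-11}). The only difference is presentational: you reduce the final estimate to Theorem \ref{err-proj-mjlofs} applied to $w={_\psi{\rm D}_{a,x}^{-\rho}}u$, whereas the paper recomputes the coefficient ratio $h_k^{l,m}$ inline, which amounts to the same bound.
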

\begin{proof}~ Since the set $\left\{(\psi(x)-\psi_a)^{-\rho}J_{j,\psi}^{\rho,-\rho}\right\}_{j=0}^{\infty}$ is a complete orthogonal basis of the space $L^2_{\varpi^{\rho,\rho}}(a,b)$, we have
$$u-\widetilde{\mathbf{P}}_N^{\rho,\rho}u=\sum_{k=N+1}^{\infty}\widehat{u}_k
(\psi(x)-\psi_a)^{-\rho}J_{k,\psi}^{\rho,-\rho}(x)$$
with
$$\widehat{u}_k=\frac{\int_{a}^{b}u(x)(\psi(x)-\psi_a)^{-\rho}J_{k,\psi}^{\rho,-\rho}(x)
\varpi^{\rho,\rho}(x){\rm d}x}{\|(\psi(x)-\psi_a)^{-\rho}J_{k,\psi}^{\rho,-\rho}(x)\|^2_{\varpi^{\rho,\rho}}}.$$
Hence, by Theorems \ref{thmIntMJFs} and \ref{props-MJFs}, we obtain
\begin{equation*}
  {_\psi{\rm D}_{a,x}^{m-\rho}}(u-\widetilde{\mathbf{P}}_N^{\rho,\rho}u)=
  {\delta}^{m}_{\psi}\left[{_\psi{\rm D}_{a,x}^{-\rho}}(u-\widetilde{\mathbf{P}}_N^{\rho,\rho}u)\right]
  =\sum_{k=N+1}^{\infty}\widehat{u}_k
  \frac{\kappa^m(k+m)!\Gamma(k+\rho+1)}{2^m(k!)^2}J_{k-m,\psi}^{m,m}(x).
\end{equation*}
Set
$$h_k^{l,m}=\kappa^{2(l-m)}\frac{(k+l)!(k-m)!}
{(k+m)!(k-l)!}.$$
Then, we have
\begin{equation*}
\begin{split}
&\|{_\psi{\rm D}_{a,x}^{l-\rho}}(u-\widetilde{\mathbf{P}}_N^{\rho,\rho}u)\|^2
  _{\varpi^{l,l}}
  =\sum_{k=N+1}^{\infty}|\widehat{u}_k|^2
  \frac{\kappa^{2m}((k+m)!)^2\Gamma^2(k+\rho+1)}{2^{2m}(k!)^4}
  \gamma_{k-m}^{m,m}h_k^{l,m}\\
  &\leq\max_{k\geq N+1}\left\{h_k^{l,m}\right\}
  \|{_\psi{\rm D}_{a,x}^{m-\rho}}u\|^2_{\varpi^{m,m}}
  \leq c\kappa^{2(l-m)}N^{2(l-m)}\|{_\psi{\rm D}_{a,x}^{m-\rho}}u\|^2_{\varpi^{m,m}}.
\end{split}
\end{equation*}
The proof is completed.
\end{proof}

Consider the $H^1$ projection operator $\mathbf{P}_N^{1,\alpha,\beta}: H^1_{0,\psi}\rightarrow F_N^{\psi}\cap H^1_{0,\psi}$ such that
\begin{equation}\label{h1-projction}
  ({\delta}^{1}_{\psi}(\mathbf{P}_N^{1,\alpha,\beta}u-u), {\delta }^{1}_{\psi}v)_{\varpi^{\alpha,\beta}}=0, \quad \forall v\in F_N^{\psi}\cap H^1_{0,\psi}.
\end{equation}
\begin{theorem}\label{h1-proj-approx-res} Let $-1<\alpha,\beta<1$. If $u\in H^1_{0,\psi}(a,b)$ and ${\delta}^{1}_{\psi}u\in B_{\alpha,\beta}^{m-1}(a,b)$, then for $1\leq m\leq N+1$,
\begin{equation*}
  \|\mathbf{P}_N^{1,\alpha,\beta}u-u\|_{1,\varpi^{\alpha,\beta}}\lesssim N^{1-m}\|{\delta }^{m}_{\psi}u\|_{\varpi^{\alpha+m-1,\beta+m-1}}.
\end{equation*}
\end{theorem}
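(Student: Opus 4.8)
\emph{Step 1: reduce to a best-approximation estimate in the seminorm.} The defining relation (\ref{h1-projction}) says exactly that $\mathbf{P}_N^{1,\alpha,\beta}u$ is the orthogonal projection of $u$ onto $F_N^\psi\cap H^1_{0,\psi}$ for the bilinear form $({\delta}^1_\psi\cdot,{\delta}^1_\psi\cdot)_{\varpi^{\alpha,\beta}}$. Since this form is an inner product on $H^1_{0,\psi}(a,b)$ (definiteness follows from the Poincar\'e inequality established in Step 4), the projection is the best approximation in the associated seminorm $|v|_{1,\varpi^{\alpha,\beta}}:=\|{\delta}^1_\psi v\|_{\varpi^{\alpha,\beta}}$, i.e.
\[
  |\mathbf{P}_N^{1,\alpha,\beta}u-u|_{1,\varpi^{\alpha,\beta}}
  =\min_{v\in F_N^\psi\cap H^1_{0,\psi}}|v-u|_{1,\varpi^{\alpha,\beta}}.
\]
Hence it suffices to exhibit one competitor $v\in F_N^\psi\cap H^1_{0,\psi}$, to bound $|u-v|_{1,\varpi^{\alpha,\beta}}$, and then to convert this seminorm bound into the full norm.

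\emph{Step 2: construct the competitor from ${\delta}^1_\psi u$.} Write $w:={\delta}^1_\psi u\in B^{m-1}_{\alpha,\beta}(a,b)$ and let $\mathbf{P}_{N-1}^{\alpha,\beta}w$ be its $L^2_{\varpi^{\alpha,\beta}}$-projection onto $F_{N-1}^\psi$ from (\ref{def_project_op}). Using the antiderivative identity (\ref{P2-MJFs-2}) for the modes $J_{k,\psi}^{\alpha,\beta}$ with $k\geq1$, together with ${\delta}^1_\psi(\psi(x)-\psi_a)=1$ for the constant mode, each term of $\mathbf{P}_{N-1}^{\alpha,\beta}w$ possesses a ${\delta}^1_\psi$-antiderivative lying in $F_N^\psi$; summing yields $\phi\in F_N^\psi$ with ${\delta}^1_\psi\phi=\mathbf{P}_{N-1}^{\alpha,\beta}w$. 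Since $u\in H^1_{0,\psi}$ forces the compatibility $\int_a^b\psi'(\tau)w(\tau)\,{\rm d}\tau=u(b)-u(a)=0$, I correct the endpoint values by setting
\[
  v:=\phi-\phi(a)-\frac{\phi(b)-\phi(a)}{\psi_b-\psi_a}\,(\psi(x)-\psi_a),
\]
which still lies in $F_N^\psi$ and satisfies $v(a)=v(b)=0$, so $v\in F_N^\psi\cap H^1_{0,\psi}$.

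\emph{Step 3: bound the seminorm error of $v$.} By construction ${\delta}^1_\psi(v-u)=(\mathbf{P}_{N-1}^{\alpha,\beta}w-w)-\theta$ with the constant $\theta=(\phi(b)-\phi(a))/(\psi_b-\psi_a)$, and the compatibility condition gives $\phi(b)-\phi(a)=\int_a^b\psi'(\mathbf{P}_{N-1}^{\alpha,\beta}w-w)\,{\rm d}\tau$. Applying Cauchy--Schwarz in the measure $\varpi^{\alpha,\beta}\,{\rm d}x$ to this integral and to $\|\theta\|_{\varpi^{\alpha,\beta}}=|\theta|\,\|1\|_{\varpi^{\alpha,\beta}}$, one gets
\[
  |v-u|_{1,\varpi^{\alpha,\beta}}
  \lesssim \|\mathbf{P}_{N-1}^{\alpha,\beta}w-w\|_{\varpi^{\alpha,\beta}},
\]
provided the finiteness conditions $\int_a^b(\psi')^2/\varpi^{\alpha,\beta}\,{\rm d}x<\infty$ and $\|1\|_{\varpi^{\alpha,\beta}}<\infty$ hold; after the change of variables $s=s(x)$ these reduce to $\int_{-1}^1(1-s)^{-\alpha}(1+s)^{-\beta}\,{\rm d}s<\infty$ and $\int_{-1}^1(1-s)^{\alpha}(1+s)^{\beta}\,{\rm d}s<\infty$, i.e. precisely $-1<\alpha,\beta<1$. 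Finally, Theorem \ref{err-proj-mjlofs} with $k=0$ applied to $w$ (degree $N-1$, regularity $m-1\leq N$) yields $\|\mathbf{P}_{N-1}^{\alpha,\beta}w-w\|_{\varpi^{\alpha,\beta}}\lesssim N^{1-m}\|{\delta}^{m-1}_\psi w\|_{\varpi^{\alpha+m-1,\beta+m-1}}=N^{1-m}\|{\delta}^{m}_\psi u\|_{\varpi^{\alpha+m-1,\beta+m-1}}$, the claimed rate for the seminorm.

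\emph{Step 4: Poincar\'e inequality and passage to the full norm.} The error $e:=\mathbf{P}_N^{1,\alpha,\beta}u-u$ lies in $H^1_{0,\psi}$, so $e(x)=\int_a^x\psi'\,{\delta}^1_\psi e\,{\rm d}\tau$; Cauchy--Schwarz together with the same integrability $\int_a^b(\psi')^2/\varpi^{\alpha,\beta}\,{\rm d}x<\infty$ gives the weighted Poincar\'e inequality $\|e\|_{\varpi^{\alpha,\beta}}\lesssim\|{\delta}^1_\psi e\|_{\varpi^{\alpha,\beta}}=|e|_{1,\varpi^{\alpha,\beta}}$, whence $\|e\|_{1,\varpi^{\alpha,\beta}}\lesssim|e|_{1,\varpi^{\alpha,\beta}}$, and combining Steps 1--3 closes the argument. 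I expect the main obstacle to be Steps 3--4: the constant $\theta$ coming from enforcing $v(b)=0$, and the Poincar\'e estimate, are exactly the places where the strict bound $\alpha,\beta<1$ (rather than merely $>-1$) is indispensable, since both hinge on the integrability of $(\psi')^2/\varpi^{\alpha,\beta}$. Care is also needed to check that the antiderivative construction in Step 2 keeps the top mode inside $F_N^\psi$ (the highest index produced is $N$, arising from the term $c_{N-1}^{\alpha,\beta}J_{N,\psi}^{\alpha,\beta}$).
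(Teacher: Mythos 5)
Your proposal is correct and follows essentially the same route as the paper: you build the competitor as a $\delta^1_\psi$-antiderivative of $\mathbf{P}_{N-1}^{\alpha,\beta}({\delta}^1_\psi u)$ corrected by a term linear in $\psi(x)-\psi_a$, control the resulting constant via $\int_a^b {\delta}^1_\psi u\,{\rm d}\psi=0$ and Cauchy--Schwarz against the reciprocal weight (which is exactly where $-1<\alpha,\beta<1$ enters, as the paper's factor $\sqrt{\gamma_0^{-\alpha,-\beta}}$), invoke Theorem \ref{err-proj-mjlofs}, and finish with the weighted Poincar\'e inequality. The only cosmetic difference is that you realize the antiderivative mode-by-mode via (\ref{P2-MJFs-2}) while the paper writes it directly as $\int_a^x\mathbf{P}_{N-1}^{\alpha,\beta}v\,{\rm d}\psi$ minus the linear correction.
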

\begin{proof}~ Denote by $v(x)={\delta}^{1}_{\psi}u(x)$. Setting
$$\phi(x)=\int_a^x \mathbf{P}_{N-1}^{\alpha,\beta}v{\rm d}\psi-\frac{\psi(x)-\psi_a}{\psi_b-\psi_a}\int_a^b\mathbf{P}_{N-1}^{\alpha,\beta}v{\rm d}\psi, $$
we have $\phi\in F_N^{\psi}\cap H^1_{0,\psi}$ and
$${\delta}^{1}_{\psi}\phi=\mathbf{P}_{N-1}^{\alpha,\beta}v-\frac{\kappa}{2}\int_{a}^{b}
\mathbf{P}_{N-1}^{\alpha,\beta}v{\rm d}\psi.$$
Hence, by the triangle inequality, one has
\begin{equation}\label{ine_11}
  \|v-{\delta}^{1}_{\psi}\phi\|_{\varpi^{\alpha,\beta}}\leq \|v-\mathbf{P}_{N-1}^{\alpha,\beta}v\|_{\varpi^{\alpha,\beta}}+
  \frac{\kappa\sqrt{\gamma_0^{\alpha,\beta}}}{2}\left|
  \int_{a}^{b}
\mathbf{P}_{N-1}^{\alpha,\beta}v{\rm d}\psi\right|.
\end{equation}
Due to $u(\pm1)=0$, we derive for $-1<\alpha,\beta<1$,
$$\left|\int_{a}^{b}
\mathbf{P}_{N-1}^{\alpha,\beta}v{\rm d}\psi\right|
=\left|\int_{a}^{b}
\mathbf{P}_{N-1}^{\alpha,\beta}v-v {\rm d}\psi\right|\leq \sqrt{\gamma_0^{-\alpha,-\beta}}
\|v-\mathbf{P}_{N-1}^{\alpha,\beta}v\|_{\varpi^{\alpha,\beta}}.
$$
Then, by Theorem \ref{err-proj-mjlofs} we have
\begin{equation*}
  \|{\delta}^{1}_{\psi}(\mathbf{P}_N^{1,\alpha,\beta}u-u)\|_{\varpi^{\alpha,\beta}}\leq
  \|v-{\delta}^{1}_{\psi}\phi\|_{\varpi^{\alpha,\beta}}
  \leq c\|v-\mathbf{P}_{N-1}^{\alpha,\beta}v\|_{\varpi^{\alpha,\beta}}
 \lesssim N^{1-m}\|{\delta}^{m}_{\psi}u\|_{\varpi^{\alpha+m-1,\beta+m-1}}.
\end{equation*}
The desired estimate follows from the Poincar\'{e} inequality.
\end{proof}

\subsubsection{Interpolation approximation}\label{subsubsec:3.3.2}
Let $\{x_j^{\alpha,\beta}\}_{j=0}^N$ be the mapped Gauss-Lobatto nodes defined in (\ref{P4-MJFs}).
For function $u(x)\in C[a,b]$, we define the interpolation operator $\mathbf{I}_N^{\alpha,\beta}:C[a,b]\rightarrow F_N^{\psi}$ at given nodes $\{x^{\alpha,\beta}_j\}_{j=0}^N$ such that
\begin{equation}\label{def_interp_op}
  \mathbf{I}_N^{\alpha,\beta} u(x) = u(x),\quad x=x^{\alpha,\beta}_j,~j=0,1,\cdots,N.
\end{equation}

\begin{theorem}\label{err-interp}Let $\alpha,\beta>-1$ and $k,m,N\in\mathbb{N}$.  For $u\in B^m_{\alpha,\beta}(a,b)~(m\geq1)$ and $0\leq m\leq \widetilde{m}=\min\{m,N+1\}$, then
\begin{equation*}
  \|{\delta}^{k}_{\psi}(u-\mathbf{I}_N^{\alpha,\beta} u)\|_{\varpi^{\alpha+k,\beta+k}}\lesssim
N^{k-\widetilde{m}}
\|{\delta}^{\widetilde{m}}_{\psi}u\|_{\varpi^{\alpha+\widetilde{m},\beta+\widetilde{m}}}.
\end{equation*}
\end{theorem}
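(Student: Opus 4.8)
The plan is to reduce the interpolation estimate to the projection estimate of Theorem~\ref{err-proj-mjlofs}, exploiting that $\mathbf{I}_N^{\alpha,\beta}$ is a projection onto $F_N^\psi$ that reproduces every element of $F_N^\psi$. Since $\mathbf{P}_{N-1}^{\alpha,\beta}u\in F_{N-1}^\psi\subset F_N^\psi$, we have $\mathbf{I}_N^{\alpha,\beta}\mathbf{P}_{N-1}^{\alpha,\beta}u=\mathbf{P}_{N-1}^{\alpha,\beta}u$, so that
\begin{equation*}
u-\mathbf{I}_N^{\alpha,\beta}u=\bigl(u-\mathbf{P}_{N-1}^{\alpha,\beta}u\bigr)-\mathbf{I}_N^{\alpha,\beta}\bigl(u-\mathbf{P}_{N-1}^{\alpha,\beta}u\bigr).
\end{equation*}
Applying $\delta_\psi^k$, taking the weighted norm and using the triangle inequality splits the error into the projection part, which Theorem~\ref{err-proj-mjlofs} (with $N-1$ in place of $N$) bounds by $N^{k-\widetilde m}\|\delta_\psi^{\widetilde m}u\|_{\varpi^{\alpha+\widetilde m,\beta+\widetilde m}}$, and the interpolant part $\|\delta_\psi^k\mathbf{I}_N^{\alpha,\beta}w\|_{\varpi^{\alpha+k,\beta+k}}$ with $w:=u-\mathbf{P}_{N-1}^{\alpha,\beta}u$.

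For the interpolant part, since $\mathbf{I}_N^{\alpha,\beta}w\in F_N^\psi$, I would first establish a compatible inverse inequality from the exact differentiation formula (\ref{P2-MJFs-11}): writing $\phi=\sum_{n=0}^N c_nJ_{n,\psi}^{\alpha,\beta}$, the identity $\delta_\psi^k\phi=\sum_{n\ge k}c_nd_{n,k}^{\alpha,\beta}J_{n-k,\psi}^{\alpha+k,\beta+k}$ together with the orthogonality (\ref{P3-MJFs}) and the asymptotics $d_{n,k}^{\alpha,\beta}\sim n^k$, $\gamma_n^{\alpha,\beta}\sim cn^{-1}$ (the gamma-ratio bound $\Gamma(n+\nu)/\Gamma(n+\vartheta)\le cn^{\nu-\vartheta}$ already used in Theorem~\ref{err-proj-mjlofs}) yields $\|\delta_\psi^k\phi\|_{\varpi^{\alpha+k,\beta+k}}\lesssim N^k\|\phi\|_{\varpi^{\alpha,\beta}}$. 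Hence $\|\delta_\psi^k\mathbf{I}_N^{\alpha,\beta}w\|_{\varpi^{\alpha+k,\beta+k}}\lesssim N^k\|\mathbf{I}_N^{\alpha,\beta}w\|_{\varpi^{\alpha,\beta}}$, and it remains to prove the $k=0$ bound $\|\mathbf{I}_N^{\alpha,\beta}w\|_{\varpi^{\alpha,\beta}}\lesssim N^{-\widetilde m}\|\delta_\psi^{\widetilde m}u\|_{\varpi^{\alpha+\widetilde m,\beta+\widetilde m}}$.

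To obtain this $k=0$ bound I would pass to the discrete inner product $(\cdot,\cdot)_N$ attached to the Gauss--MJF quadrature of Theorem~\ref{Gauss-MJF-quad}. By Remark~\ref{remarkrealparam} the discrete and continuous norms are equivalent on $F_N^\psi$ with constants in $[1,2+\tfrac{\alpha+\beta+1}{N}]$, so that, since $\mathbf{I}_N^{\alpha,\beta}w$ interpolates $w$ at the nodes,
\begin{equation*}
\|\mathbf{I}_N^{\alpha,\beta}w\|_{\varpi^{\alpha,\beta}}^2\le\sum_{j=0}^N|w(x_j^{\alpha,\beta})|^2\varpi_j^{\alpha,\beta}.
\end{equation*}
Expanding $w=\sum_{n\ge N}\widehat u_nJ_{n,\psi}^{\alpha,\beta}$ and using that the quadrature is exact on $F_{2N-1}^\psi$, the right-hand side becomes $\sum_{n,n'\ge N}\widehat u_n\widehat u_{n'}\bigl(J_{n,\psi}^{\alpha,\beta},J_{n',\psi}^{\alpha,\beta}\bigr)_N$, whose diagonal and aliased off-diagonal entries are estimated through the same gamma-ratio asymptotics and compared against the tail of $\|\delta_\psi^{\widetilde m}u\|_{\varpi^{\alpha+\widetilde m,\beta+\widetilde m}}^2=\sum_{n\ge\widetilde m}|\widehat u_n|^2(d_{n,\widetilde m}^{\alpha,\beta})^2\gamma_{n-\widetilde m}^{\alpha+\widetilde m,\beta+\widetilde m}$, with the gamma-ratio factor producing the $N^{-2\widetilde m}$ gain.

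The main obstacle is exactly this last step: controlling the discrete seminorm of the projection tail $w$, i.e.\ the aliasing of the modes $n\ge N$ onto the grid $\{x_j^{\alpha,\beta}\}$. Unlike the projection proof, the point values $w(x_j^{\alpha,\beta})$ mix all tail coefficients, so one cannot simply read off an orthogonal expansion; the cross terms $\bigl(J_{n,\psi}^{\alpha,\beta},J_{n',\psi}^{\alpha,\beta}\bigr)_N$ for $n,n'\ge N$ must be shown, after summation against $\widehat u_n$, to be dominated by the weighted norm of $\delta_\psi^{\widetilde m}u$. Everything else — the decomposition, the inverse inequality, and the norm equivalence — is routine once (\ref{P2-MJFs-11}), (\ref{P3-MJFs}), Theorem~\ref{Gauss-MJF-quad} and Remark~\ref{remarkrealparam} are in hand.
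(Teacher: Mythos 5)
Your decomposition $u-\mathbf{I}_N^{\alpha,\beta}u=(u-\mathbf{P}^{\alpha,\beta}u)-\mathbf{I}_N^{\alpha,\beta}(u-\mathbf{P}^{\alpha,\beta}u)$ and the inverse inequality $\|\delta_\psi^{k}\phi\|_{\varpi^{\alpha+k,\beta+k}}\lesssim N^{k}\|\phi\|_{\varpi^{\alpha,\beta}}$ for $\phi\in F_N^{\psi}$ are exactly the paper's first two ingredients (the latter is Theorem A.4 in the appendix). But the step you yourself flag as "the main obstacle" is in fact the heart of the proof, and your proposal does not close it: you reduce everything to showing $\|\mathbf{I}_N^{\alpha,\beta}w\|_{\varpi^{\alpha,\beta}}\lesssim N^{-\widetilde m}\|\delta_\psi^{\widetilde m}u\|_{\varpi^{\alpha+\widetilde m,\beta+\widetilde m}}$ for the projection tail $w$, and then propose to control $\sum_j|w(x_j^{\alpha,\beta})|^2\varpi_j^{\alpha,\beta}$ by expanding $w$ in the modes $n\ge N$ and estimating the aliased cross terms $(J_{n,\psi}^{\alpha,\beta},J_{n',\psi}^{\alpha,\beta})_N$. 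That aliasing analysis is genuinely delicate for Gauss--Lobatto nodes (the quadrature is only exact on $F_{2N-1}^{\psi}$, the cross terms do not vanish, and a naive Cauchy--Schwarz over the doubly infinite sum does not produce the $N^{-2\widetilde m}$ gain without additional structure), and it is left entirely unexecuted. As written, the proof is incomplete.

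The paper takes a different and shorter route at precisely this point: instead of discrete norms and aliasing, it invokes an $L^2_{\varpi^{\alpha,\beta}}$-stability estimate for the interpolation operator in terms of \emph{continuous} quantities, namely $\|\mathbf{I}_N^{\alpha,\beta}v\|_{\varpi^{\alpha,\beta}}\lesssim N^{-\alpha-1}|v(b)|+N^{-\beta-1}|v(a)|+\|v\|_{\varpi^{\alpha,\beta}}+\kappa^{-2}N^{-1}|v|_{B^1_{\alpha,\beta}}$ (Theorem A.3, a mapped version of Lemma 3.11 in \cite{SheTW11}). Applied to $v=u-\mathbf{P}_N^{\alpha,\beta}u$, the norm and seminorm terms are handled by Theorem \ref{err-proj-mjlofs}, while the endpoint values are bounded by a separate estimate $|(u-\mathbf{P}_N^{\alpha,\beta}u)(b)|\lesssim m^{-1/2}N^{1+\alpha-m}\|\delta_\psi^{m}u\|_{\varpi^{\alpha+m,\beta+m}}$ (Theorem A.2), whose $N^{1+\alpha}$ growth is exactly cancelled by the $N^{-\alpha-1}$ prefactor in the stability bound. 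If you want to complete your argument, either import a stability result of this type or carry out the aliasing estimate in full; neither is a routine consequence of (\ref{P2-MJFs-11}), (\ref{P3-MJFs}), Theorem \ref{Gauss-MJF-quad} and Remark \ref{remarkrealparam} alone.
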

\begin{proof}~ Note that $\mathbf{I}_N^{\alpha,\beta}[\mathbf{P}_N^{\alpha,\beta}u]=\mathbf{P}_N^{\alpha,\beta}u$. Hence one has
\begin{equation*}
\|{\delta}^{k}_{\psi}(u-\mathbf{I}_N^{\alpha,\beta} u)\|_{\varpi^{\alpha+k,\beta+k}}\leq
\|{\delta}^{k}_{\psi}(u-\mathbf{P}_N^{\alpha,\beta} u)\|_{\varpi^{\alpha+k,\beta+k}}
+\|{\delta}^{k}_{\psi}[\mathbf{I}_N^{\alpha,\beta}(u-\mathbf{P}_N^{\alpha,\beta} u)]\|_{\varpi^{\alpha+k,\beta+k}}.
\end{equation*}
From Theorems A.2, A.3, and A.4 in Appendix A, we have
\begin{equation*}
  \|{\delta}^{k}_{\psi}[\mathbf{I}_N^{\alpha,\beta}(u-\mathbf{P}_N^{\alpha,\beta} u)]\|_{\varpi^{\alpha+k,\beta+k}}\lesssim N^{k-\widetilde{m}}\|{\delta }^{\widetilde{m}}_{\psi}u\|_{\varpi^{\alpha+\widetilde{m},\beta+\widetilde{m}}}.
\end{equation*}
Combining Theorem \ref{err-proj-mjlofs} with the above estimates, we obtain the desired result.
\end{proof}

\section{Spectral and collocation method based on the MJFs} \label{sec:4}
\subsection{Fractional IVP}\label{subsec:4.1}
\subsubsection{Petrov-Galerkin spectral method}\label{subsubsec:4.1.1}
For simplification, we first consider the linear IVP with $0<\mu<1$:
\begin{equation}\label{init-prob-RL}
  {_\psi{\rm D}_{a,x}^{\mu}}u(x)=f(x),\quad x\in(a,b],\quad {_\psi{\rm D}_{a,x}^{-(1-\mu)}}u(a)=g_0.
\end{equation}
We assume that $g_0=0$ without loss of generality. (If $g_0\neq0$, let  $u(x)=v(x)+g_0\frac{(\psi(x)-\psi_a)^{\mu-1}}{\Gamma(\mu)}$, then ${_\psi{\rm D}_{a,x}^{-(1-\mu)}}v(a)=0$).

It is clear that ${_\psi{\rm D}_{a,x}^{-(1-\mu)}}\phi(a)=0$ for every $\phi\in E_N^{-\mu,\psi}$.

The Petrov-Galerkin spectral scheme of (\ref{init-prob-RL}) reads as:
to find $u_N\in E_N^{-\mu,\psi}$ such that
\begin{equation}\label{Petrov-spec-Gal-ivp-rl}
  ({_\psi{\rm D}_{a,x}^{\mu}}u_N, v_N)_{\varpi^{0,0}}=(\mathbf{I}_N^{0,0}f,v_N)_{\varpi^{0,0}},\quad \forall v_N\in F_{N}^{\psi}.
\end{equation}
\begin{theorem}\label{convrate-PGS-ivp}Let $u$ and $u_N$ be the solutions of (\ref{init-prob-RL}) and (\ref{Petrov-spec-Gal-ivp-rl}), respectively. If $f\in C[a,b]\cap B^{m-1}_{-1,-1}(a,b)(m\geq1)$, then we have that for $m\leq N+1$,
\begin{equation}\label{err-est-PGs-ivp}
  \|{_\psi{\rm D}_{a,x}^{\mu}}(u-u_N)\|_{\varpi^{0,0}}+
  \|(u-u_N)\|_{\varpi^{0,0}}\lesssim N^{-m}\|{\delta}^{m}_{\psi}f\|_{\varpi^{m,m}}.
\end{equation}
\end{theorem}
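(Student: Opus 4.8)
The plan is to exploit the exact mapping property of ${_\psi{\rm D}_{a,x}^{\mu}}$ on the trial space, which reduces the Petrov--Galerkin scheme to a pointwise identity, and then to transport the interpolation error through the fractional integral. First I would fix the basis $\{(\psi(x)-\psi_a)^{\mu}J_{n,\psi}^{0,\mu}(x)\}_{n=0}^{N}$ of the trial space $E_N^{-\mu,\psi}=(\psi(x)-\psi_a)^{\mu}F_N^{\psi}$. Theorem \ref{thm-fd-mjfs} with $\alpha=0,\beta=\mu$ (admissible since $\beta-\mu=0>-1$) gives ${_\psi{\rm D}_{a,x}^{\mu}}[(\psi(x)-\psi_a)^{\mu}J_{n,\psi}^{0,\mu}(x)]=\frac{\Gamma(n+\mu+1)}{\Gamma(n+1)}J_{n,\psi}^{\mu,0}(x)$, so ${_\psi{\rm D}_{a,x}^{\mu}}$ carries this basis onto the basis $\{J_{n,\psi}^{\mu,0}\}_{n=0}^{N}$ of $F_N^{\psi}$, i.e. it is a bijection $E_N^{-\mu,\psi}\to F_N^{\psi}$. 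Consequently ${_\psi{\rm D}_{a,x}^{\mu}}u_N\in F_N^{\psi}$. Since also $\mathbf{I}_N^{0,0}f\in F_N^{\psi}$ and the test space in (\ref{Petrov-spec-Gal-ivp-rl}) is the whole of $F_N^{\psi}$ under the non-degenerate weight $\varpi^{0,0}$, the weak equation forces the two members of $F_N^{\psi}$ to have equal inner products against a basis of $F_N^{\psi}$, hence to coincide: ${_\psi{\rm D}_{a,x}^{\mu}}u_N=\mathbf{I}_N^{0,0}f$. This collapse to a strong identity (and with it the unique solvability $u_N={_\psi{\rm D}_{a,x}^{-\mu}}\mathbf{I}_N^{0,0}f$) is the heart of the argument.

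Next I would represent the error. Every element of $E_N^{-\mu,\psi}$ satisfies $[{_\psi{\rm D}_{a,x}^{-(1-\mu)}}\,\cdot\,](a)=0$, so $u_N$ meets the same homogeneous initial condition as $u$. Subtracting ${_\psi{\rm D}_{a,x}^{\mu}}u=f$ from the strong identity gives ${_\psi{\rm D}_{a,x}^{\mu}}(u-u_N)=f-\mathbf{I}_N^{0,0}f$, which already controls the first term on the left of (\ref{err-est-PGs-ivp}). Applying ${_\psi{\rm D}_{a,x}^{-\mu}}$ and using Lemma \ref{lmm-prp-Inv-RL} with $n=1$, the boundary term $\propto(\psi(x)-\psi_a)^{\mu-1}$ vanishes because $[{_\psi{\rm D}_{a,x}^{-(1-\mu)}}(u-u_N)](a)=0$, yielding $u-u_N={_\psi{\rm D}_{a,x}^{-\mu}}(f-\mathbf{I}_N^{0,0}f)$.

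Finally I would estimate the two pieces. The first equals $\|f-\mathbf{I}_N^{0,0}f\|_{\varpi^{0,0}}$, bounded directly by the interpolation estimate of Theorem \ref{err-interp} (taking $\alpha=\beta=0$, $k=0$, $\widetilde{m}=m$), which produces exactly $N^{-m}\|{\delta}_{\psi}^{m}f\|_{\varpi^{m,m}}$. For the second, $\|u-u_N\|_{\varpi^{0,0}}=\|{_\psi{\rm D}_{a,x}^{-\mu}}(f-\mathbf{I}_N^{0,0}f)\|_{\varpi^{0,0}}$, I would invoke the $L^2$-boundedness of $\psi$-fractional integration on the bounded interval (a Hardy--Young type bound, transported from $(-1,1)$ through the map $s(x)$), giving $\|{_\psi{\rm D}_{a,x}^{-\mu}}g\|_{\varpi^{0,0}}\lesssim\|g\|_{\varpi^{0,0}}$ and hence the same $N^{-m}$ rate. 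Adding the two estimates delivers (\ref{err-est-PGs-ivp}).

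The main obstacle is twofold. The delicate structural point is justifying the collapse to the strong identity: it rests on ${_\psi{\rm D}_{a,x}^{\mu}}u_N$ landing \emph{precisely} in the test space and on the exactness in Theorem \ref{thm-fd-mjfs}, not merely on a Galerkin orthogonality. The genuinely analytic difficulty is the second term: unlike the derivative term, $\|{_\psi{\rm D}_{a,x}^{-\mu}}(f-\mathbf{I}_N^{0,0}f)\|_{\varpi^{0,0}}$ is not diagonalised by the Jacobi-function basis, so I expect to need the mapping/stability property of ${_\psi{\rm D}_{a,x}^{-\mu}}$ itself, together with some care in reconciling the weighted regularity of the hypothesis on $f$ with the weight $\varpi^{m,m}$ appearing on the right-hand side of (\ref{err-est-PGs-ivp}).
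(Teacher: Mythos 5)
Your argument is correct and rests on the same three pillars as the paper's proof: the exact image formula of Theorem \ref{thm-fd-mjfs} on the trial space $E_N^{-\mu,\psi}$, the resulting collapse of (\ref{Petrov-spec-Gal-ivp-rl}) to the strong identity ${_\psi{\rm D}_{a,x}^{\mu}}u_N=\mathbf{I}_N^{0,0}f$, and the $L^2_{\varpi^{0,0}}$-boundedness of ${_\psi{\rm D}_{a,x}^{-\mu}}$ combined with Lemma \ref{lmm-prp-Inv-RL} (with the boundary term killed by the vanishing initial data) to pass from the derivative error to $\|u-u_N\|_{\varpi^{0,0}}$. The one place you genuinely diverge is the treatment of the derivative error: you subtract the strong identities directly to get ${_\psi{\rm D}_{a,x}^{\mu}}(u-u_N)=f-\mathbf{I}_N^{0,0}f$ and invoke only the interpolation estimate of Theorem \ref{err-interp}, whereas the paper inserts the intermediate projection $\widetilde{\mathbf{P}}_N^{-\mu,-\mu}u$, shows ${_\psi{\rm D}_{a,x}^{\mu}}\widetilde{\mathbf{P}}_N^{-\mu,-\mu}u=\mathbf{P}_N^{0,0}f$, and splits the error into three terms estimated by Theorems \ref{err-est-L22pj}, \ref{err-proj-mjlofs} and \ref{err-interp}. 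Your route is shorter and needs strictly fewer approximation results (Theorem \ref{err-est-L22pj} becomes superfluous for this theorem), at the cost of leaning entirely on the exact solvability $u_N={_\psi{\rm D}_{a,x}^{-\mu}}\mathbf{I}_N^{0,0}f$; the paper's detour through $\widetilde{\mathbf{P}}_N^{-\mu,-\mu}$ is the version that would survive if the scheme were perturbed so that the collapse to a pointwise identity no longer held. Your use of the basis $(\psi(x)-\psi_a)^{\mu}J_{n,\psi}^{0,\mu}$ instead of the paper's $(\psi(x)-\psi_a)^{\mu}J_{n,\psi}^{-\mu,\mu}$ is immaterial since both span $E_N^{-\mu,\psi}$, and your closing remark about the mismatch between the stated hypothesis $f\in B^{m-1}_{-1,-1}(a,b)$ and the norm $\|{\delta}^{m}_{\psi}f\|_{\varpi^{m,m}}$ on the right-hand side is a fair observation that applies equally to the paper's own proof.
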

\begin{proof}~ It is clear that the set of $\left\{(\psi(x)-\psi_a)^\mu J_{n,\psi}^{-\mu,\mu}(x)\right\}_{n=0}^\infty$ forms a complete orthogonal basis of $L^2_{\varpi^{-\mu,-\mu}}(a,b)$. For $u\in L^2_{\varpi^{-\mu,-\mu}}(a,b)$, the projection error $u-\widetilde{\mathbf{P}}_N^{-\mu,-\mu}u$ is
$$u-\widetilde{\mathbf{P}}_N^{-\mu,-\mu}u=\sum_{n=N+1}^{\infty}\widehat{u}_n
(\psi(x)-\psi_a)^\mu J_{n,\psi}^{-\mu,\mu}(x)$$
with
$$\widehat{u}_n=\frac{\int_{a}^{b}u(x)(\psi(x)-\psi_a)^\mu J_{n,\psi}^{-\mu,\mu}(x)\varpi^{-\mu,-\mu}(x){\rm d}x}{\|(\psi(x)-\psi_a)^\mu J_{n,\psi}^{-\mu,\mu}(x)\|^2_{\varpi^{-\mu,-\mu}}}.$$
By acting operator ${_\psi{\rm D}_{a,x}^{\mu}}$ on the projection error and utilizing Theorem \ref{thm-fd-mjfs}, we have
$${_\psi{\rm D}_{a,x}^{\mu}}(u-\widetilde{\mathbf{P}}_N^{-\mu,-\mu}u)
=\sum_{n=N+1}^{\infty}\widehat{u}_n\frac{\Gamma(n+\mu+1)}{n!}J_{n,\psi}^{0,0}(x).$$
Then we have
$$({_\psi{\rm D}_{a,x}^{\mu}}(u-\widetilde{\mathbf{P}}_N^{-\mu,-\mu}u),v)_{\varpi^{0,0}}=0,
\quad \forall v\in F_N^{\psi}.$$
Since ${_\psi{\rm D}_{a,x}^{\mu}}u=f$, it comes out
$$(f-{_\psi{\rm D}_{a,x}^{\mu}}\widetilde{\mathbf{P}}_N^{-\mu,-\mu}u,v)_{\varpi^{0,0}}=0,
\quad \forall v\in F_N^{\psi},$$
and we know ${_\psi{\rm D}_{a,x}^{\mu}}\widetilde{\mathbf{P}}_N^{-\mu,-\mu}u=
\mathbf{P}_N^{0,0}f$. We also have ${_\psi{\rm D}_{a,x}^{\mu}}u_N=\mathbf{I}_N^{0,0}f$ by
(\ref{Petrov-spec-Gal-ivp-rl}). Therefore, we get
\begin{equation}\label{relat-pp2}
  {_\psi{\rm D}_{a,x}^{\mu}}(\widetilde{\mathbf{P}}_N^{-\mu,-\mu}u-u_N)=
  \mathbf{P}_N^{0,0}f-\mathbf{I}_N^{0,0}f.
\end{equation}
Making use of the triangle inequality leads to
\begin{equation}\label{est-pp33}
  \begin{split}
&\|{_\psi{\rm D}_{a,x}^{\mu}}(u-u_N)\|_{\varpi^{0,0}}\\
\leq &
\|{_\psi{\rm D}_{a,x}^{\mu}}(u-\widetilde{\mathbf{P}}_N^{-\mu,-\mu}u)\|_{\varpi^{0,0}}
+ \|{_\psi{\rm D}_{a,x}^{\mu}}(\widetilde{\mathbf{P}}_N^{-\mu,-\mu}u-u_N)\|_{\varpi^{0,0}}\\
\leq &
\|{_\psi{\rm D}_{a,x}^{\mu}}(u-\widetilde{\mathbf{P}}_N^{-\mu,-\mu}u)\|_{\varpi^{0,0}}
+\|f-\mathbf{P}_N^{0,0}f\|_{\varpi^{0,0}}+
\|f-\mathbf{I}_N^{0,0}f\|_{\varpi^{0,0}}.
  \end{split}
\end{equation}
From approximation results in Theorems \ref{err-est-L22pj}, \ref{err-proj-mjlofs}
and \ref{err-interp}, we estimate the three items in the right-hand side of (\ref{est-pp33})
as follows:
\begin{equation*}
  \begin{split}
\|{_\psi{\rm D}_{a,x}^{\mu}}(u-\widetilde{\mathbf{P}}_N^{-\mu,-\mu}u)\|_{\varpi^{0,0}}
&\lesssim N^{-m}\|{_\psi{\rm D}_{a,x}^{m+\mu}}u\|_{\varpi^{m,m}}=
N^{-m}\|{\delta}^{m}_{\psi}f\|_{\varpi^{m,m}},\\
\|f-\mathbf{P}_N^{0,0}f\|_{\varpi^{0,0}}&\lesssim N^{-m}\|{\delta }^{m}_{\psi}f\|_{\varpi^{m,m}}, \\
\|f-\mathbf{I}_N^{0,0}f\|_{\varpi^{0,0}}&\lesssim N^{-m}\|{\delta }^{m}_{\psi}f\|_{\varpi^{m,m}}.
 \end{split}
\end{equation*}
Then, we conclude
\begin{equation}\label{err-est-s4-11}
  \|{_\psi{\rm D}_{a,x}^{\mu}}(u-u_N)\|_{\varpi^{0,0}}\lesssim N^{-m}\|{\delta }^{m}_{\psi}f\|_{\varpi^{m,m}}.
\end{equation}
From Lemma \ref{lmm-prp-Inv-RL} and the boundness of operator ${_\psi{\rm D}_{a,x}^{-\mu}}$, we obtain
\begin{equation}\label{err-est-s4-12}
  \|u-u_N\|_{\varpi^{0,0}}= \|{_\psi{\rm D}_{a,x}^{-\mu}}({_\psi{\rm D}_{a,x}^{\mu}}
  (u-u_N))\|_{\varpi^{0,0}}\leq c\|{_\psi{\rm D}_{a,x}^{\mu}}
  (u-u_N)\|_{\varpi^{0,0}}.
\end{equation}
The proof is completed by combining estimates (\ref{err-est-s4-11}) and (\ref{err-est-s4-12}).
\end{proof}

In order to implement the Petrov-Galerkin scheme (\ref{Petrov-spec-Gal-ivp-rl}), we expand the approximate solution as
\begin{equation}\label{expre-numsol-pgs}
  u_N(x)=\sum_{k=0}^{N}\widehat{u}_k(\psi(x)-\psi_a)^\mu J_{k,\psi}^{-\mu,\mu}(x).
\end{equation}
Inserting $u_N$ in  (\ref{Petrov-spec-Gal-ivp-rl}) and taking $v_N=J_{n,\psi}^{0,0}(x), n=0,1,\cdots,N$, one has
\begin{equation}\label{numsol-psgs-c}
  \widehat{u}_k=\frac{(2k+1)k!}{2\Gamma(k+\mu+1)}
  (\mathbf{I}_N^{0,0}f,J_{k,\psi}^{0,0})_{\varpi^{0,0}},\quad k=0,1,\cdots,N.
\end{equation}
We approximate the integral in the right-hand side of (\ref{numsol-psgs-c}) by the Gauss-MJF-type quadrature in Theorem \ref{Gauss-MJF-quad} and obtain the approximate numerical solution $u_N$ by (\ref{expre-numsol-pgs}).

\begin{remark} It is easy to implement the
 Galerkin spectral scheme of (\ref{init-prob-RL}), say, to find $u_N\in F_N^{\psi}$ such that
\begin{equation}\label{spec-Gal-ivp-rl}
  ({_\psi{\rm D}_{a,x}^{\mu}}u_N, v_N)_{\varpi^{\alpha+\mu,0}}=(\mathbf{I}_N^{\alpha+\mu,0}f,v_N)_{\varpi^{\alpha+\mu,0}},\quad \forall v_N\in F_{N}^{\psi}.
\end{equation}
We can obtain an error estimate for the Galerkin spectral scheme (\ref{spec-Gal-ivp-rl}) in an analogous way. The scheme (\ref{Petrov-spec-Gal-ivp-rl}) achieves the convergence rate that depends only on the regularity of $f$ (see Theorem \ref{convrate-PGS-ivp}). However, the convergence rate of
the scheme (\ref{spec-Gal-ivp-rl}) depends on the regularity of the exact solution $u$.
\end{remark}

Now we consider the IVP with $\psi$-Caputo derivative for $0\leq\mu\leq1$:
\begin{equation}\label{init-prob-C}
  {_{C\psi}{\rm D}_{a,x}^{\mu}}u(x)=f(x),\quad x\in(a,b],\quad u(a)=0.
\end{equation}
Noting the initial condition $u(a)=0$ and Lemma \ref{lmm-link-C2H}, we know
$${_{C\psi}{\rm D}_{a,x}^{\mu}}u(x)={_\psi{\rm D}_{a,x}^{\mu}}u(x).$$ Hence, the above analysis is valid also for this case by replace of $F_N^\psi$ with $F_N^\psi\cap\{\phi | \phi(a)=0\}.$
It is easy to generalize the previous analysis to the higher-order case $\mu\in(n-1,n),  n>1$. We omit the details.

\subsubsection{Spectral collocation method}\label{subsubsec:4.1.2}
For nonlinear problem, collocation method is more popular than the Galerkin method.
Let $\{x_j^{\alpha,\beta}\}_{j=0}^N$ be the collocation points as defined in (\ref{P4-MJFs}).
Now consider the nonlinear IVP (P1) with $0<\mu<1$:
\begin{equation}\label{init-prob-RL-nonl-s4}
  {_\psi{\rm D}_{a,x}^{\mu}}u(x)=f(x,u),\quad x\in(a,b],\quad {_\psi{\rm D}_{a,x}^{-(1-\mu)}}u(a)=0.
\end{equation}
We present the spectral collocation scheme for (\ref{init-prob-RL-nonl-s4}) as:
to find $u_N\in F_N^\psi$ such that
\begin{equation}\label{init-prob-RL-dis}
  {_\psi{\rm D}_{a,x}^{\mu}} u_N(x_j^{\alpha,\beta})=f(x_j^{\alpha,\beta},u_N(x_j^{\alpha,\beta})),\quad j=1,2,\cdots,N
\end{equation}
and
$$ {_\psi{\rm D}_{a,x}^{-(1-\mu)}}u_N(x_0^{\alpha,\beta})=0.$$
Let us introduce the mapped Lagrange interpolation basis functions as
\begin{equation}\label{Lagrg_basis}
  L_j(x)=\frac{\prod_{i\neq j}\psi(x)-\psi({x_i^{\alpha,\beta}})}
{\prod_{i\neq j}\psi(x_j^{\alpha,\beta})-\psi({x_i^{\alpha,\beta}})},
\quad j=0,1,\cdots,N,
\end{equation}
which satisfies $L_j(x_i^{\alpha,\beta})=\delta_{ji}$. The \emph{differentiation matrix of the $\psi$-fractional derivative (DMFD)} is given by
$$\left[\mathbf{D}^{\mu,\psi}\right]_{(N+1)\times (N+1)}:=([ {_\psi{\rm D}_{a,x}^{\mu}}L_j](x_k^{\alpha,\beta}))_{k ,j=0}^N.$$
The above matrix has to be revised according to the initial condition.

Let $\mathbf{b}=[{_\psi{\rm D}_{a,x}^{-(1-\mu)}}L_0(a),\cdots, {_\psi{\rm D}_{a,x}^{-(1-\mu)}}L_N(a)]$. We replace the first row of $\mathbf{D}^{\mu,\psi}$ with $\mathbf{b}$.
Then, using the same notation, the discrete scheme (\ref{init-prob-RL-dis}) is in matrix-vector form
\begin{equation}\label{dis-IVP-matvec}
  \mathbf{D}^{\mu,\psi}\mathbf{u}=f(\mathbf{x},\mathbf{u})
\end{equation}
with $\mathbf{x}=[x_0^{\alpha,\beta},x_1^{\alpha,\beta},\cdots,x_N^{\alpha,\beta}]^T, \mathbf{u}=u_N(\mathbf{x})$ (The first component of $f$ is set to zero). The system (\ref{dis-IVP-matvec}) is nonlinear, some kinds of iterative techniques can be employed to solve it.

\begin{remark}
To evaluate the entries of DMFD, we first note that
\begin{equation}\label{link-in-mjlofs}
  L_j(x)=\sum_{i=0}^N l_{ij}{J}_{i,\psi}^{\alpha,\beta}(x)
\end{equation}
with
\begin{equation*}
   l_{kj}=\frac{{J}_{k,\psi}^{\alpha,\beta}(x_j^{\alpha,\beta})\varpi_j}{\gamma_k^{\alpha,\beta}},\quad k=0,1,\cdots,N-1;\quad
   l_{Nj}=\frac{{J}_{N,\psi}^{\alpha,\beta}(x_j^{\alpha,\beta})\varpi_j}
{\left(2+\frac{\alpha+\beta+1}{N}\right)\gamma_N^{\alpha,\beta}}.
\end{equation*}
Then we need to evaluate $[ {_\psi{\rm D}_{a,x}^{\mu}}{J}_{j,\psi}^{\alpha,\beta}](x_k^{\alpha,\beta})$, which can be done efficiently by the relations in Section \ref{subsec:3.2}.
\end{remark}

For the nonlinear IVP (P2) with the Caputo derivative ($0<\mu<1$):
\begin{equation}\label{init-prob-RL-nonl}
  {_{C\psi}{\rm D}_{a,x}^{\mu}}u(x)=f(x,u),\quad x\in(a,b],\quad u(a)=0,
\end{equation}
the DMFD alters to
$$\left[\mathbf{D}^{\mu,\psi}\right]_{N\times N}:=([ {_{C\psi}{\rm D}_{a,x}^{\mu}}L_j](x_k^{\alpha,\beta}))_{k ,j=1}^N.$$
The number of unknowns is $N$ as $\mathbf{u}=u_N(\mathbf{x})$ with
$\mathbf{x}=[x_1^{\alpha,\beta},\cdots,x_N^{\alpha,\beta}]^T$.
\begin{remark}It is possible to implement the spectral collocation scheme for
(\ref{init-prob-RL-nonl-s4}) as:
to find $u_N\in E_N^{-\mu,\psi}$ such that
\begin{equation}\label{nonstand-spec-colloc1}
  {_\psi{\rm D}_{a,x}^{\mu}}u_N(x_j^{-\mu,\mu}) =f(x_j^{-\mu,\mu},u_N(x_j^{-\mu,\mu})),\quad j=1,2,\cdots,N
\end{equation}
and
$$ {_\psi{\rm D}_{a,x}^{-(1-\mu)}}u_N(x_0^{-\mu,\mu})=0.$$
In this circumstance, the mapped Lagrange interpolation basis functions should be replaced by
\begin{equation}\label{Lagrg_basis}
  L_j(x)=\frac{(\psi(x)-\psi_a)^{\mu}}{(\psi(x_j^{-\mu,\mu})-\psi_a)^{\mu}}\frac{\prod_{i\neq j}\psi(x)-\psi({x_i^{-\mu,\mu}})}
{\prod_{i\neq j}\psi(x_j^{-\mu,\mu})-\psi({x_i^{-\mu,\mu}})},
\quad j=0,\cdots,N.
\end{equation}
It is clear that the spectral collocation scheme (\ref{nonstand-spec-colloc1}) should achieve the same convergence rate as the Petrov-Galerkin scheme (\ref{Petrov-spec-Gal-ivp-rl}).
\end{remark}
\subsection{Fractional BVP}\label{subsec:4.2}
\subsubsection{Petrov-Galerkin spectral method}\label{subsec:4.2.1}
We consider the linear BVP (P3) with $f(x,u)=f(x)$, that is,
\begin{equation}\label{BVP-RL-P1-lin}
\left\{
  \begin{array}{ll}
    {_\psi{\rm D}^{\mu}_{a,x}} u(x)=f(x), \quad x\in(a,b), \quad 1<\mu<2, \\
    \left[{_\psi{\rm D}^{-(2-\mu)}_{a,x}} u\right](a)=\left[{_\psi{\rm D}^{-(2-\mu)}_{a,x}} u\right](b)=0.
  \end{array}
\right.
\end{equation}

Denote the discrete space as:
$$U_N:=\left\{\phi=(\psi(x)-\psi_a)^{\mu-1}\varphi:\varphi\in F_{N-1}^\psi\quad \mbox{such that}\quad {_\psi{\rm D}_{a,x}^{-(2-\mu)}}\phi(b)=0~\right\}.$$
Let $V_N^0:=F_{N}^{\psi}\cap H^1_{0,\psi}(a,b)$.
The Petrov-Galerkin spectral scheme of (\ref{BVP-RL-P1-lin}) is to find $u_N\in U_{N}$ such that $\forall w_N\in V_N^0$
\begin{equation}\label{spec-PGal-bvp-rl}
  ({_\psi{\rm D}_{a,x}^{\mu-1}}u_N, {\delta}^{1}_{\psi}w_N)_{\varpi^{0,0}}=-(f,
w_N)_{\varpi^{0,0}}.
\end{equation}
In the view of (\ref{weak-bvp-rlpsi}), the scheme (\ref{spec-PGal-bvp-rl}) can be equivalently reformulted as to find $v_N:={_\psi{\rm D}_{a,x}^{-(2-\mu)}}u_N\in V_N^0$ such that
\begin{equation}\label{spec-PGal-bvp-r2}
  ({\delta}^{1}_{\psi} v_N, {\delta}^{1}_{\psi}w_N)_{\varpi^{0,0}}=-(f,
w_N)_{\varpi^{0,0}}.
\end{equation}
\begin{theorem}Let $u$ and $u_N$ be the solution of (\ref{weak-bvp-rlpsi}) and (\ref{spec-PGal-bvp-r2}), respectively. If $v={_\psi{\rm D}_{a,x}^{-(2-\mu)}}u\in H^1_{0,\psi}(a,b)$ and ${\delta}^{1}_{\psi}v\in B^{m-1}_{0,0}(a,b)$, then we have
  \begin{equation}\label{err-est-bvp-g}
    \|{_\psi{\rm D}_{a,x}^{\mu-1}}(u-u_N)\|_{\varpi^{0,0}}\lesssim N^{1-m}\|
    {_\psi{\rm D}_{a,x}^{m+\mu-2}}u\|_{\varpi^{m-1,m-1}}.
  \end{equation}
Moreover, if $f\in B_{0,0}^{m-2}(a,b)(m\geq2)$, then we get
 \begin{equation}\label{err-est-bvp-f}
    \|{_\psi{\rm D}_{a,x}^{\mu-1}}(u-u_N)\|_{\varpi^{0,0}}\lesssim N^{1-m}\|
    {\delta}^{m-2}_{\psi}f\|_{\varpi^{m-1,m-1}}.
  \end{equation}
\end{theorem}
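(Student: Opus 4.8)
The plan is to recognise (\ref{spec-PGal-bvp-r2}) as a \emph{conforming} Galerkin method in the auxiliary variable $v={_\psi{\rm D}_{a,x}^{-(2-\mu)}}u$ and then to invoke the $H^1$-projection estimate of Theorem \ref{h1-proj-approx-res}. First I would record the structural identity that converts the fractional-derivative error into a first-order one. Since $\lceil\mu-1\rceil=1$ and $-(1-(\mu-1))=-(2-\mu)$, Definition \ref{def_HD} gives ${_\psi{\rm D}_{a,x}^{\mu-1}}w={\delta}^{1}_{\psi}\left[{_\psi{\rm D}_{a,x}^{-(2-\mu)}}w\right]$ for $w=u$ and $w=u_N$. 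Writing $v_N={_\psi{\rm D}_{a,x}^{-(2-\mu)}}u_N\in V_N^0$, this means
\begin{equation*}
  \|{_\psi{\rm D}_{a,x}^{\mu-1}}(u-u_N)\|_{\varpi^{0,0}}=\|{\delta}^{1}_{\psi}(v-v_N)\|_{\varpi^{0,0}},
\end{equation*}
so the quantity to be bounded is exactly the $\psi$-weighted $H^1$-seminorm of the error $v-v_N$.

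Next I would establish Galerkin orthogonality. Because $\varpi^{0,0}(x)=\kappa\psi'(x)$, the two inner products satisfy $(\cdot,\cdot)_{\varpi^{0,0}}=\kappa(\cdot,\cdot)_{\psi}$, so multiplying the weak form (\ref{weak-bvp-rlpsi}) by $\kappa$ recasts it in the same inner product as (\ref{spec-PGal-bvp-r2}). Since $V_N^0=F_N^\psi\cap H^1_{0,\psi}\subset H^1_{0,\psi}$, the scheme is conforming; testing the continuous identity with $w_N\in V_N^0$ and subtracting (\ref{spec-PGal-bvp-r2}) yields
\begin{equation*}
  ({\delta}^{1}_{\psi}(v-v_N),{\delta}^{1}_{\psi}w_N)_{\varpi^{0,0}}=0,\quad\forall w_N\in V_N^0.
\end{equation*}
The bilinear form here is precisely the squared seminorm, which is an inner product by the Poincar\'e inequality on $H^1_{0,\psi}$ invoked in the Remark following (\ref{weak-bvp-rlpsi}); this also guarantees existence and uniqueness of $v_N$. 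The usual C\'ea argument then gives the best-approximation bound $\|{\delta}^{1}_{\psi}(v-v_N)\|_{\varpi^{0,0}}\leq\|{\delta}^{1}_{\psi}(v-w_N)\|_{\varpi^{0,0}}$ for every $w_N\in V_N^0$.

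I would then take $w_N=\mathbf{P}_N^{1,0,0}v\in V_N^0$ and apply Theorem \ref{h1-proj-approx-res} with $\alpha=\beta=0$, whose hypotheses $v\in H^1_{0,\psi}$ and ${\delta}^{1}_{\psi}v\in B^{m-1}_{0,0}$ are exactly those assumed, to obtain
\begin{equation*}
  \|{\delta}^{1}_{\psi}(v-v_N)\|_{\varpi^{0,0}}\leq\|v-\mathbf{P}_N^{1,0,0}v\|_{1,\varpi^{0,0}}\lesssim N^{1-m}\|{\delta}^{m}_{\psi}v\|_{\varpi^{m-1,m-1}}.
\end{equation*}
It then remains to rewrite $\|{\delta}^{m}_{\psi}v\|_{\varpi^{m-1,m-1}}$ in the two advertised forms. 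For (\ref{err-est-bvp-g}) I observe that $m+\mu-2\in(m-1,m)$, so by Definition \ref{def_HD} the $\psi$-derivative of that order uses ceiling $m$ and inner integration order $m-(m+\mu-2)=2-\mu$, giving ${_\psi{\rm D}_{a,x}^{m+\mu-2}}u={\delta}^{m}_{\psi}\left[{_\psi{\rm D}_{a,x}^{-(2-\mu)}}u\right]={\delta}^{m}_{\psi}v$. For (\ref{err-est-bvp-f}) I use that ${_\psi{\rm D}_{a,x}^{\mu}}u={\delta}^{2}_{\psi}v=f$, whence ${\delta}^{m}_{\psi}v={\delta}^{m-2}_{\psi}f$, which requires $m\geq2$ as assumed. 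Substituting these identities yields (\ref{err-est-bvp-g}) and (\ref{err-est-bvp-f}).

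The argument is essentially standard conforming Galerkin theory combined with the projection estimate already proved, so I expect the only delicate point to be the order bookkeeping in the final step: one must verify which integer ceiling $n$ and which inner integration order $n-\sigma$ appear in Definition \ref{def_HD} when identifying ${\delta}^{m}_{\psi}v$ with ${_\psi{\rm D}_{a,x}^{m+\mu-2}}u$ (here $n=m$, $n-\sigma=2-\mu$) and, through ${_\psi{\rm D}_{a,x}^{\mu}}u={\delta}^{2}_{\psi}v=f$, with ${\delta}^{m-2}_{\psi}f$. Because both identities come directly from the definition of the $\psi$-Riemann--Liouville derivative applied to $v$, no boundary or initial-value terms intervene, and once they are in place the two estimates follow by direct substitution.
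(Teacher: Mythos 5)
Your proposal is correct and follows essentially the same route as the paper: the paper likewise reduces the error to $\|{\delta}^{1}_{\psi}(v-v_N)\|_{\varpi^{0,0}}$, invokes the standard best-approximation property of the conforming Galerkin scheme together with Theorem \ref{h1-proj-approx-res}, and then identifies ${\delta}^{m}_{\psi}v$ with ${_\psi{\rm D}_{a,x}^{m+\mu-2}}u$ and ${\delta}^{m-2}_{\psi}f$. You merely spell out the C\'ea argument and the order bookkeeping that the paper leaves implicit.
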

\begin{proof}~ Using the standard argument for error analysis of Galerkin approximation, we
know that
\begin{equation*}\label{est-galerkin-app}
  \|{\delta}^{1}_{\psi}(v-v_N)\|_{\varpi^{0,0}}=\inf_{v^*\in V_N^0}
  \|{\delta}^{1}_{\psi}(v-v^*)\|_{\varpi^{0,0}}.
\end{equation*}
By the approximation result of Theorem \ref{h1-proj-approx-res}, we have
\begin{equation*}\label{est-galerkin-app}
  \|{\delta}^{1}_{\psi}(v-v_N)\|_{\varpi^{0,0}}\leq C N^{1-m}\|{\delta}^{m}_{\psi} v\|
  _{\varpi^{m-1,m-1}}.
\end{equation*}
Noting that $v={_\psi{\rm D}_{a,x}^{-(2-\mu)}}u$ and $v_N={_\psi{\rm D}_{a,x}^{-(2-\mu)}}u_N$, we obtain the estimate (\ref{err-est-bvp-g}).

By ${\delta}^{m-2}_{\psi}f={_\psi{\rm D}_{a,x}^{m+\mu-2}}u$, the estimate (\ref{err-est-bvp-f}) follows immediately from (\ref{err-est-bvp-g}).
\end{proof}

For implementing the discrete scheme (\ref{spec-PGal-bvp-rl}), set
\begin{equation}\label{test-f-pgs}
  \phi_n(x)=(\psi(x)-\psi_a)^{\mu-1}J_{n,\psi}^{1-\mu,\mu-1}(x), \quad n=1,2,\cdots.
\end{equation}
Since $J_{n,\psi}^{-1,1}(x)=-\frac{\kappa(n+1)}{2n}(\psi_b-\psi(x))J_{n-1,\psi}^{1,1}(x)$, we have from Theorem \ref{thmIntMJFs}
\begin{equation*}
  \begin{split}
  {_\psi{\rm D}_{a,x}^{-(2-\mu)}} \phi_n(x)&=\frac{\Gamma(n+\mu)}{(n+1)!}(\psi(x)-\psi_a)J_{n,\psi}^{-1,1}(x)\\
  &=-\frac{\kappa\Gamma(n+\mu)}{2n n!}(\psi_b-\psi(x))(\psi(x)-\psi_a)J_{n-1,\psi}^{1,1}(x).
  \end{split}
\end{equation*}
 It verifies that $\{\phi_i\}_{i=1}^{N-1}$ consists a basis of $U_N$. Since
 $J_{n,\psi}^{-1,-1}(x)=-\frac{\kappa^2}{4}(\psi_b-\psi(x))(\psi(x)-\psi_a)J_{n-2,\psi}^{1,1}(x)$, then
 $$V_N^0=\mbox{span}\left\{J_{n,\psi}^{-1,-1}, \quad 2\leq n\leq N\right\}.$$

Let $u_N=\sum_{i=1}^{N-1}\widehat{u}_i \phi_i(x), w_N=J_{n,\psi}^{-1,-1}(n=2,\cdots,N)$, and insert them in (\ref{spec-PGal-bvp-rl}). Noting that $\delta_\psi^1J_{n,\psi}^{-1,-1}=\frac{\kappa(n-1)}{2}J_{n-1,\psi}^{0,0} (n\geq2)$, we have
\begin{equation}\label{sol-pg-bvp}
  \widehat{u}_i=-\frac{(2i+1) (i-1)!}{\kappa\Gamma(i+\mu)}(f,J_{i+1,\psi}^{-1,-1})_{\varpi^{0,0}},
\end{equation}
for $i=1,2,\cdots,N-1$. We note that using the MJFs as basis functions, the matrix of the linear system is diagonal.
\subsubsection{Spectral collocation method}\label{subsec:4.2.2}
Following the similar argument in Subsection \ref{subsubsec:4.1.2}, the
collocation method for boundary value problem needs to replace the fractional differential operator in equation by differentiation matrix corresponding to fractional differential operator with consideration of boundary value conditions. Consider abstract boundary value problem
\begin{equation}\label{ab-bvp}
  F({_\psi{\rm D}^{\mu_1}}u,{_\psi{\rm D}^{\mu_2}}u,\cdots, {_\psi{\rm D}^{\mu_k}}u, u,x)=0, \quad x\in(a,b)
\end{equation}
with $\mu_1>\mu_2\geq\cdots\geq\mu_k, \mu_1\in(1,2)$. The derivative operator ${_\psi{\rm D}^{\mu_k}}(1\leq i\leq k)$ can be ${_\psi{\rm D}_{a,x}^{\mu}},{_\psi{\rm D}_{x,b}^{\mu}}, {_{C\psi}{\rm D}_{a,x}^{\mu}},$ or
${_{C\psi}{\rm D}_{x,b}^{\mu}}$.
The boundary value conditions are given as $\mathcal{B}_au(a)=\mathcal{B}_bu(b)=0$, where
$\mathcal{B}_a$ and $\mathcal{B}_b$ are certain kinds of boundary value operators.
Using the differentiation matrix, the discrete scheme of (\ref{ab-bvp}) takes the following form
\begin{equation}\label{ab-bvp-dictret-M}
  F(\mathbf{D}^{\mu_1}\mathbf{u},\mathbf{D}^{\mu_2}\mathbf{u},\cdots, \mathbf{D}^{\mu_k}\mathbf{u}, \mathbf{u},\mathbf{x})=0,
\end{equation}
which will be solved for unknowns $\mathbf{u}$.
\section{Numerical validation} \label{sec:5}
We are now in the position of numerical tests. In the following numerical simulations, we consider various choices of $\psi$ as in Remark \ref{Remark_psi}. The main purpose of the examples is to check the convergence behaviour of the numerical solution. To measure the accuracy of the presented method if the exact solution is known, the errors are defined by
$$err(N)=\max\left\{|u_N(x_i)-u(x_i)|\right\},$$
where $x_i=a+\frac{(b-a)i}{500} (i=0,\cdots,500)$ for the Petrov-Galerkin method and $x_i=x_i^{\alpha,\beta} (i=0,\cdots,N)$ for the spectral collocation method,
$u_N(x)$ and $u(x)$ are the numerical and the exact solutions, respectively.
All of the computations are performed by Matlab R2020a on laptop
with AMD Ryzen 7 5800H with Radeon Graphics, 3.20 GHz.

\subsection{Fractional IVPs}
Let $0<\mu<1$. In this section, we apply the MJFs spectral and collocation method to the IVPs of linear and nonlinear fractional ODEs.
\begin{example}\label{examp-ivp-linear}Consider the linear FODE (\ref{init-prob-RL}) with
$0<\mu<1$. We choose $f(x)$ such that the
problem satisfies one of the following four cases:
\begin{enumerate}
  \setlength{\itemindent}{1.5em}
  \item [C11.]  $u(x)=(\psi(x)-\psi_a)^\mu \exp({\psi(x)-\psi_a}).$
  \item [C12.]  $u(x)=(\psi(x)-\psi_a)^\mu\sin(\pi\kappa(\psi(x)-\psi_a)).$
  \item [C13.]  $u(x)=(\psi(x)-\psi_a) \exp({\psi(x)-\psi_a}).$
  \item [C14.]  $u(x)=(\psi(x)-\psi_a)\sin(\pi\kappa(\psi(x)-\psi_a)).$
\end{enumerate}
\end{example}
We first apply the Petrov-Galerkin scheme (\ref{Petrov-spec-Gal-ivp-rl}) to solve the problem in this example. The $u(x)$ in C11 and C12 leads to a smooth $f(x)$, but
in C13 and C14 causes a low regularity of $f(x)$. According to Theorem \ref{convrate-PGS-ivp}, the predicted convergence rate is ``spectral accuracy" for C11 and C12, while it is limited for C13 and C14. The numerical errors of our tests are plotted in Figures \ref{fige1_1}-\ref{fige1_6}. The ``spectral accuracy" is observed from Figures \ref{fige1_1}-\ref{fige1_3}, whilst the convergence rate is finite in
Figures \ref{fige1_4}-\ref{fige1_6}, which agree with the theoretical results.
\begin{figure}[t]
	\centering
		\centerline{\includegraphics[]{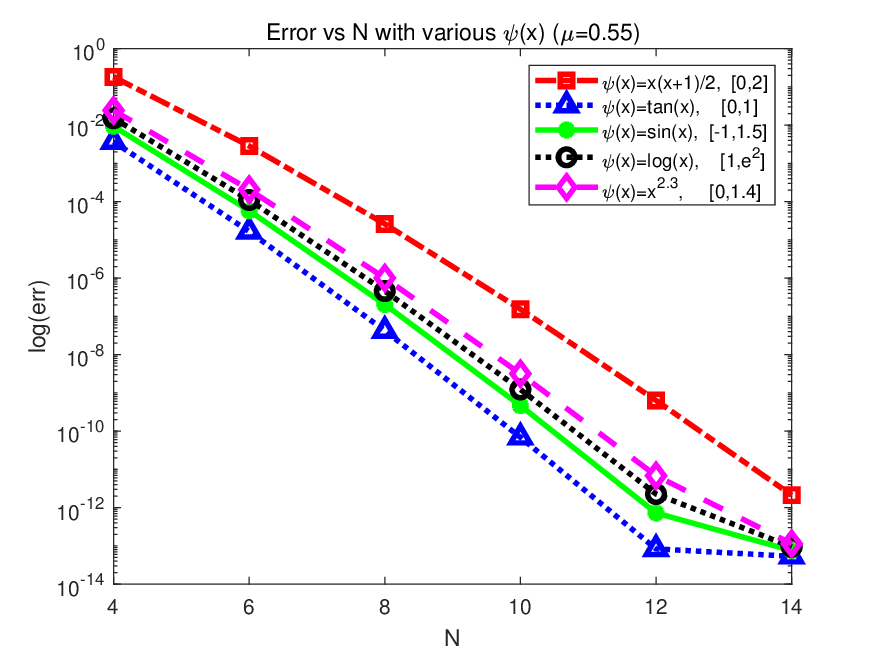}}
		{\caption{Errors of Example \ref{examp-ivp-linear} by Petrov-Galerkin scheme for case C11 with different $\psi(x)$.}\label{fige1_1}}
\end{figure}

\begin{figure}[t]
	\centering	
		\centerline{\includegraphics[]{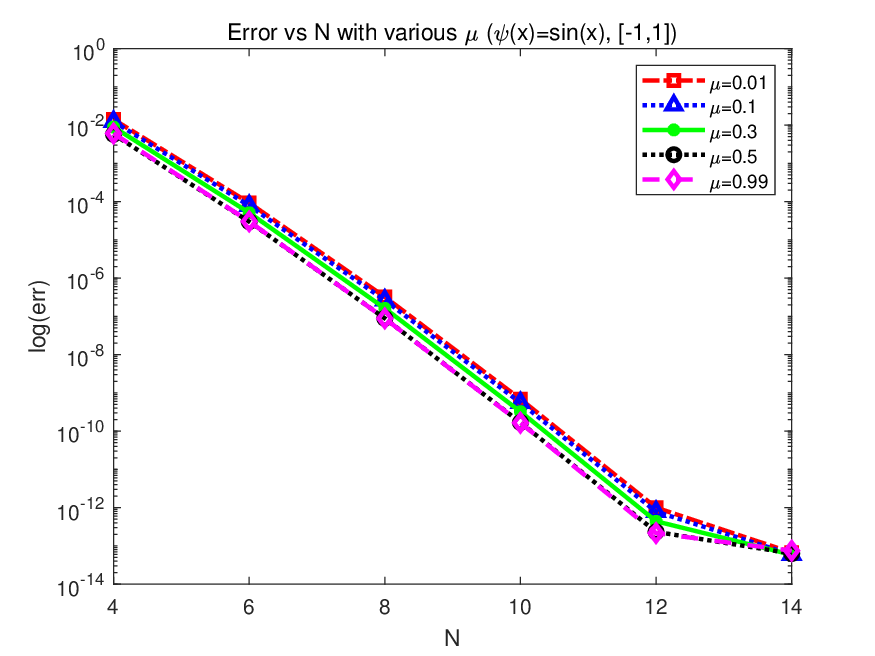}}
		{\caption{Errors of Example \ref{examp-ivp-linear} by Petrov-Galerkin scheme for case C11 with different $\mu$.}\label{fige1_2}}
\end{figure}

\begin{figure}[t]
	\centering
		\centerline{\includegraphics[]{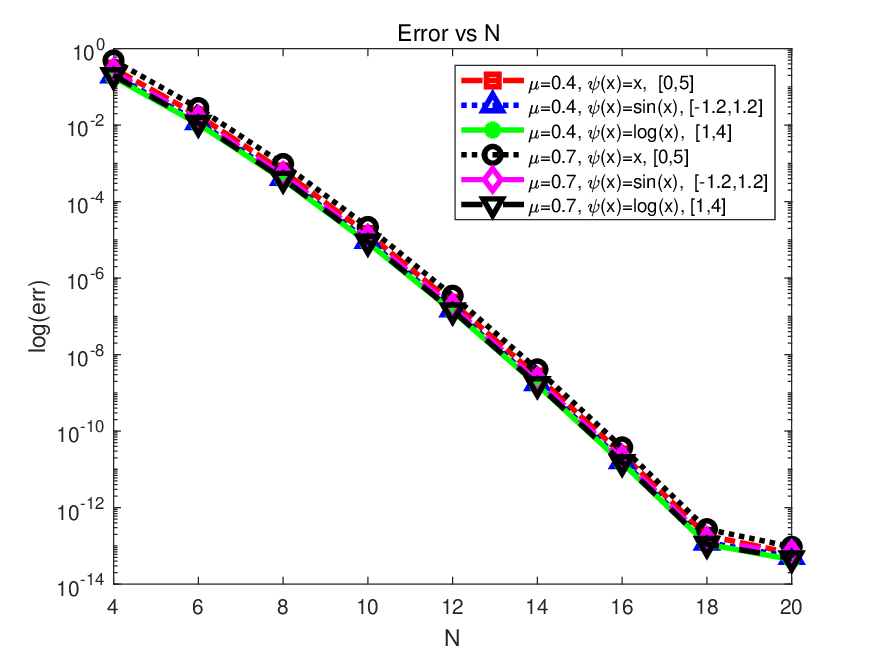}}
		{\caption{Errors of Example \ref{examp-ivp-linear} by Petrov-Galerkin scheme for case C12.}\label{fige1_3}}
\end{figure}

\begin{figure}[t]
	\centering
		\centerline{\includegraphics[]{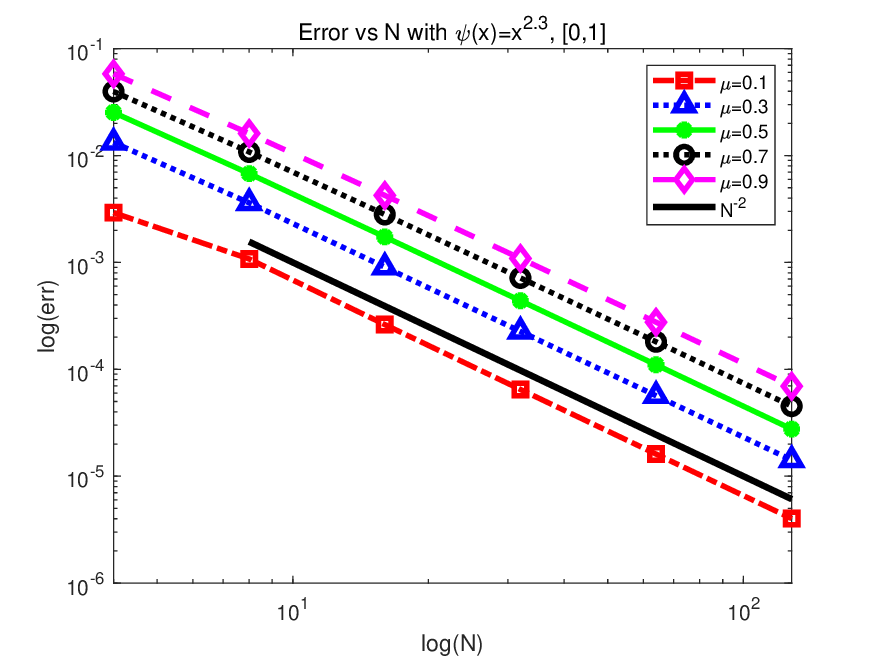}}
		{\caption{Errors of Example \ref{examp-ivp-linear} by Petrov-Galerkin scheme for case C13.}\label{fige1_4}}
\end{figure}

\begin{figure}[t]
	\centering
		\centerline{\includegraphics[]{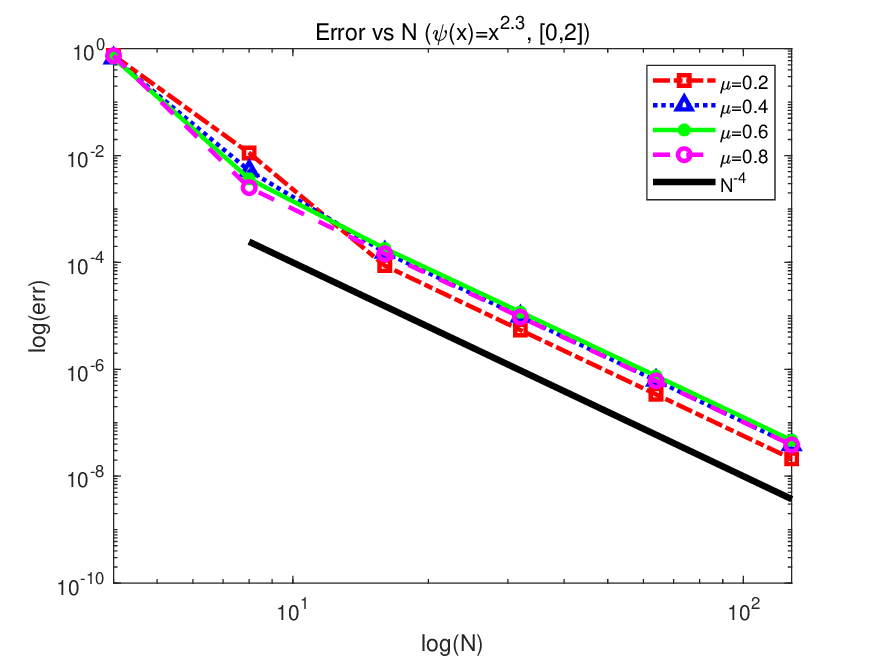}}
		{\caption{Errors of Example \ref{examp-ivp-linear} by Petrov-Galerkin scheme for case C14.}\label{fige1_5}}
\end{figure}

\begin{figure}[t]
\centering
		\centerline{\includegraphics[]{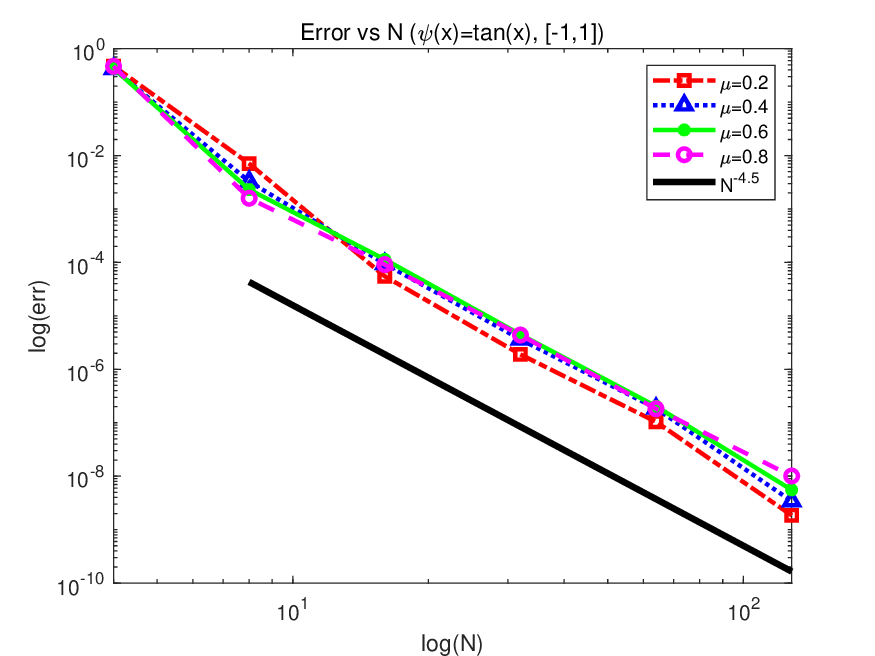}}
		{\caption{Errors of Example \ref{examp-ivp-linear} by Petrov-Galerkin scheme for case C14.}\label{fige1_6}}
\end{figure}

We next apply the spectral collocation scheme (\ref{init-prob-RL-dis}) to solve the same problem for comparison. The parameters $\alpha$ and $\beta$ in the scheme (\ref{init-prob-RL-dis}) are always zero for simplification. The numerical errors of our tests are plotted in Figures \ref{fige1_c1}-\ref{fige1_c6}. The convergence behaviour of the spectral collocation scheme is pretty interesting: the convergence rate is finite in Figures \ref{fige1_c1}-\ref{fige1_c3} for the low regularity of the exact solution $u(x)$ (C11 and C12), whilst the ``spectral accuracy" is achieved for the smooth exact solution $u(x)$ (C13 and C14) in Figures \ref{fige1_c4}-\ref{fige1_c6}.

\begin{figure}[t]
	\centering
		\centerline{\includegraphics[]{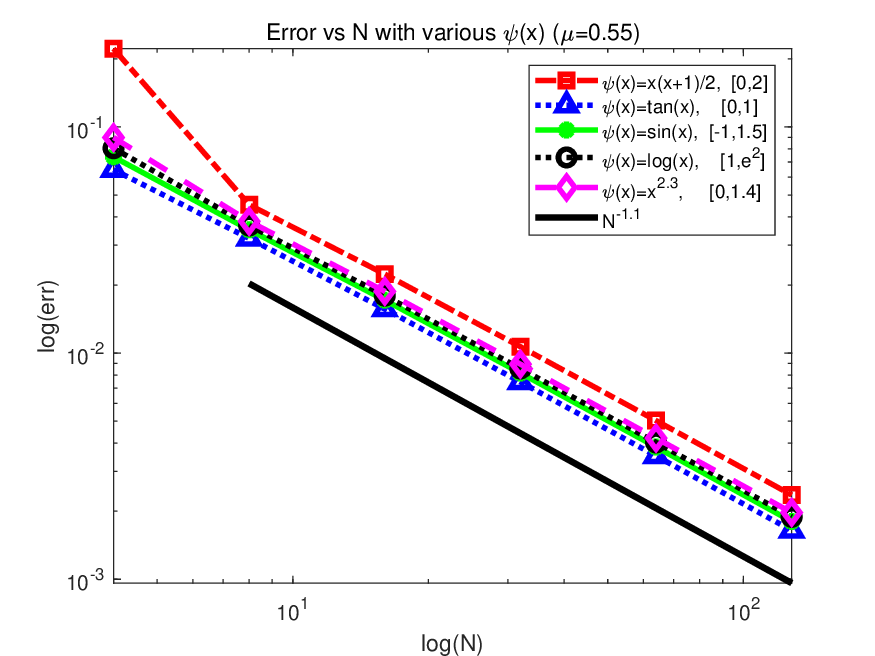}}
		{\caption{Errors of Example \ref{examp-ivp-linear} by spectral collocation scheme for case C11 with different $\psi(x)$.}\label{fige1_c1}}
\end{figure}

\begin{figure}[t]
\centering
		\centerline{\includegraphics[]{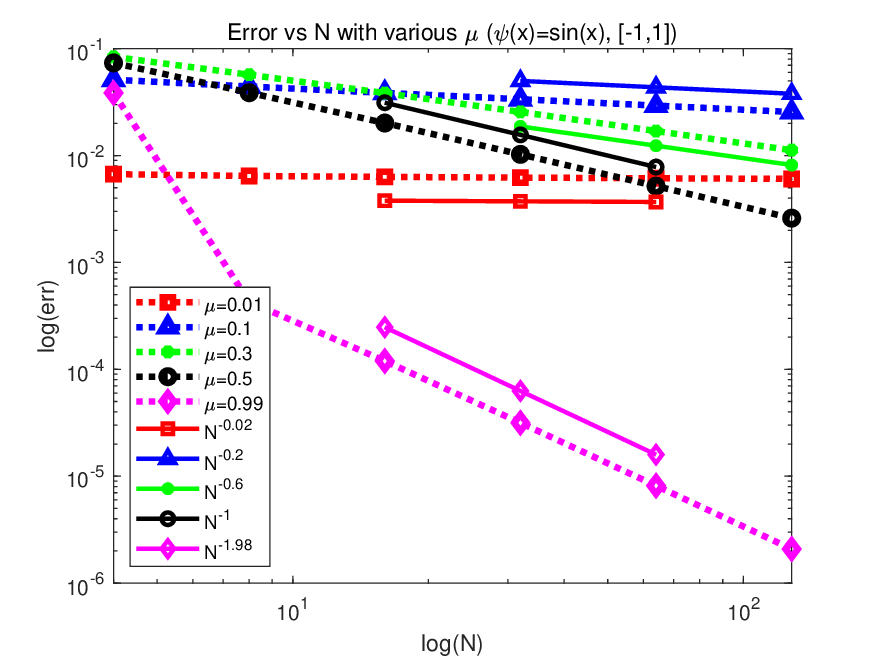}}
		{\caption{Errors of Example \ref{examp-ivp-linear} by spectral collocation scheme for case C11 with different $\mu$.}\label{fige1_c2}}
\end{figure}

\begin{figure}[t]
	\centering
		\centerline{\includegraphics[]{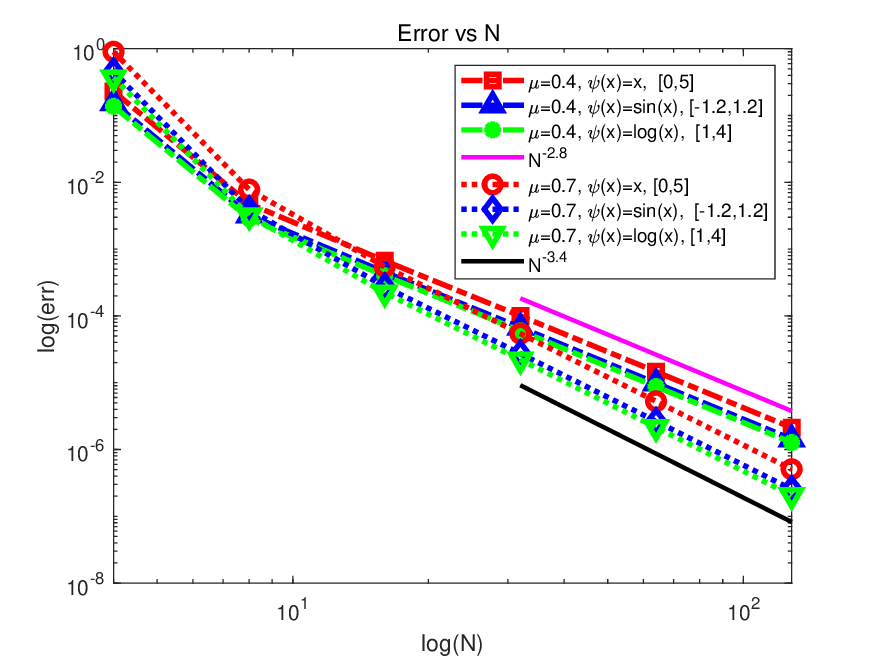}}
		{\caption{Errors of Example \ref{examp-ivp-linear} by spectral collocation scheme for case C12.}\label{fige1_c3}}
\end{figure}

\begin{figure}[t]
\centering
		\centerline{\includegraphics[]{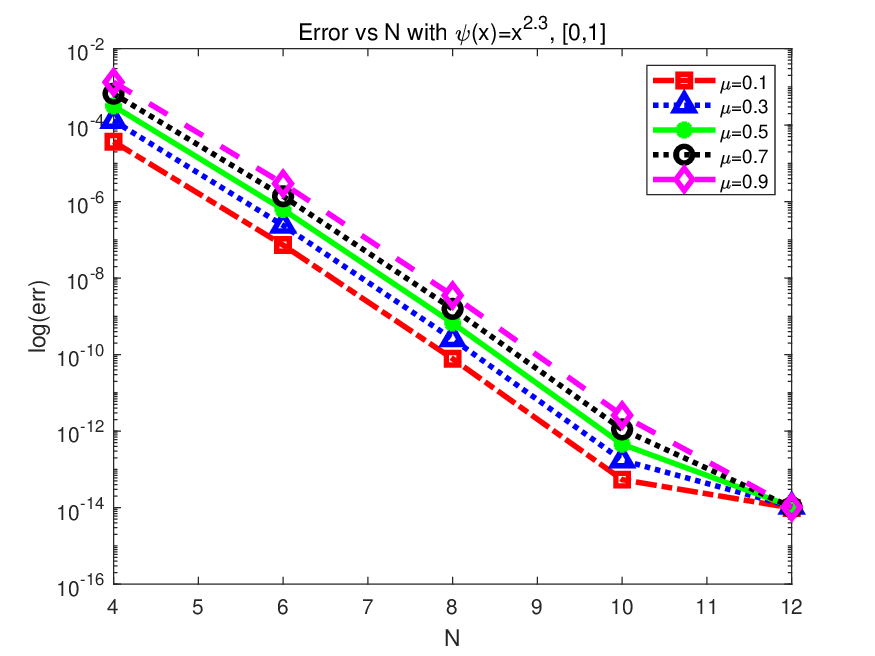}}
		{\caption{Errors of Example \ref{examp-ivp-linear} by spectral collocation scheme for case C13.}\label{fige1_c4}}
\end{figure}

\begin{figure}[t]
	\centering
		\centerline{\includegraphics[]{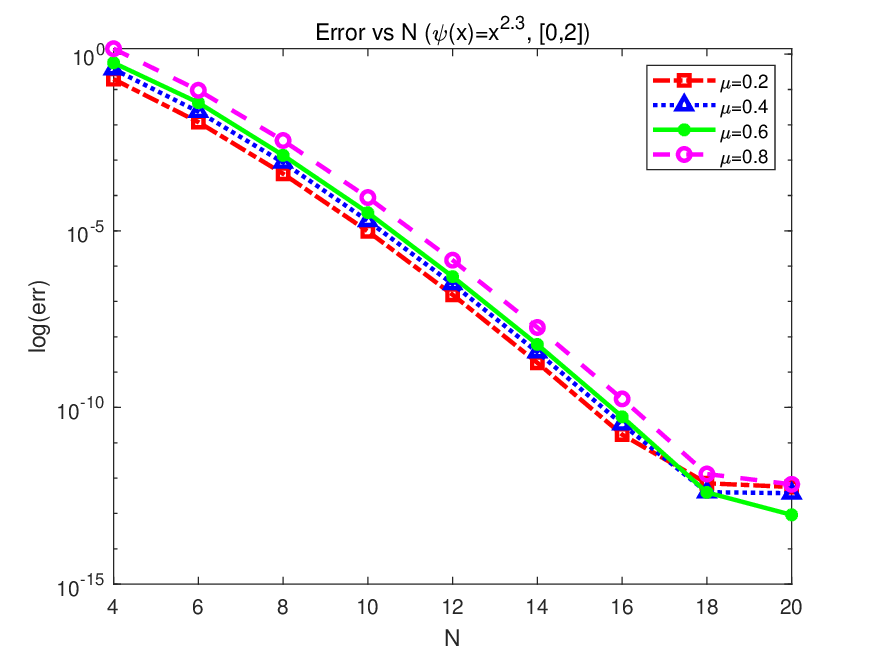}}
		{\caption{Errors of Example \ref{examp-ivp-linear} by spectral collocation scheme for case C14.}\label{fige1_c5}}
	\end{figure}

\begin{figure}[t]
\centering
		\centerline{\includegraphics[]{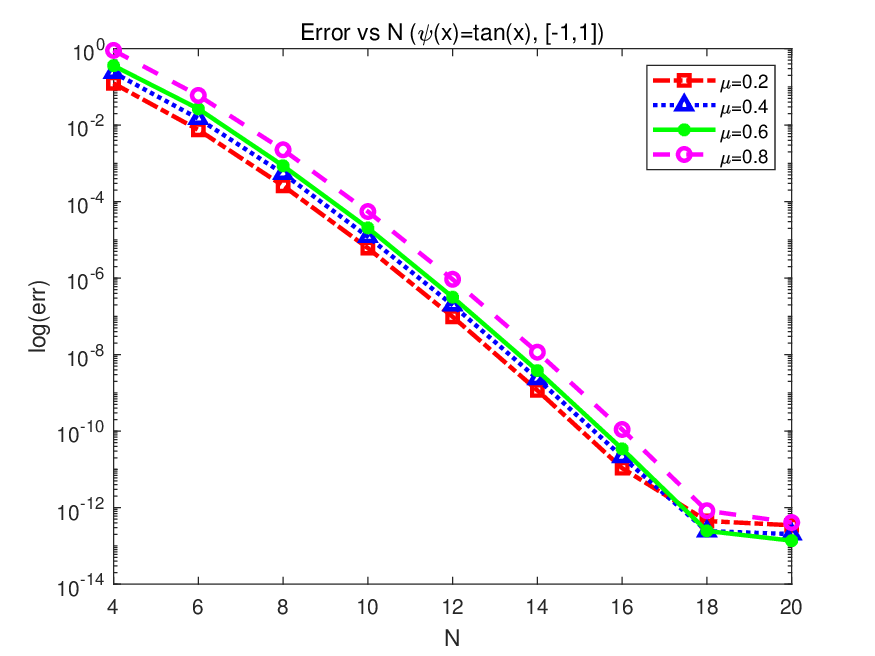}}
		{\caption{Errors of Example \ref{examp-ivp-linear} by spectral collocation scheme for case C14.}\label{fige1_c6}}
\end{figure}

\begin{example}\label{examp-ivp-nonlinear}For $0<\mu<1$, consider
\begin{equation}\label{examp-nonlin-ivp}
  {_{C\psi}{\rm D}_{a,x}^{\mu}}u(x)=g(x)-u^2,\quad x\in(a,b],\quad u(a)=0,
\end{equation}
where $g(x)$ is chosen such that the
problem satisfies the following two cases:
\begin{enumerate}
\setlength{\itemindent}{1.5em}
  \item [C21.]  $u(x)=\sum_{k=1}^{30}\frac{\Gamma(k-\mu+1)(\psi(x)-\psi_a)^k}{k!}.$
  \item [C22.]  $u(x)=\frac{120(\psi(x)-\psi_a)^{5+\mu}}{\Gamma(\mu+6)}+
      \frac{\Gamma(\mu+4)(\psi(x)-\psi_a)^{3+2\mu}}{\Gamma(2\mu+4)}
      +\frac{\Gamma(2\mu+3)(\psi(x)-\psi_a)^{2+3\mu}}{\Gamma(3\mu+3)}.$
\end{enumerate}
\end{example}
The Petrov-Galerkin spectral scheme is awkward to solve the above problem because of nonlinearity. Here we apply spectral collocation scheme (\ref{init-prob-RL-dis}) to solve it. The exact solution $u\in F_{30}^\psi$ in C21, so we take $N = 30$ and apply the Newton method to solve the nonlinear system with total $10$ iterations. We list the numerical errors for various $\psi(x)$ and $\mu$ in Table \ref{tabe2_1}. For the C22, we only list the numerical errors with $\psi(x)=x^{2.3}, x\in [0,1]$ and different $\mu$ in Table \ref{tabe2_2}. For other $\psi(x)$, the numerical results are similar.

\begin{table}[h]
\caption{Errors of Example \ref{examp-ivp-nonlinear} by spectral collocation scheme with $N=30$ in the case C21 with different $\mu$ and $\psi_1=\frac{x(x+1)}{2},[0,0.7];\psi_2=\tan(x),[0,\frac{\pi}{5}]; \psi_3=\sin(x),[0,\frac{\pi}{3}];\psi_4=\log(x),[1,e];
\psi_5=x^{2.3},[0,1].$}\label{tabe2_1}
\begin{tabular}{c|ccccc}\hline
$\mu$ &  0.1 &  0.3  & 0.5 & 0.7  & 0.9 \\ \hline
$\psi_1$& 6.239e-14 &9.304e-14 &9.481e-14 &9.048e-14 &9.193e-14 \\
$\psi_2$& 7.327e-14 &1.315e-13 &1.410e-13 &1.308e-13 &1.259e-13 \\
$\psi_3$& 8.260e-14 &1.954e-13 &2.411e-13 &2.087e-13 &1.861e-13 \\
$\psi_4$& 8.527e-14 &2.700e-13 &5.169e-13 &4.761e-13 &3.597e-13 \\
$\psi_5$& 8.527e-14 &2.718e-13 &5.205e-13 &4.778e-13 &3.606e-13 \\
\hline
\end{tabular}
\end{table}

\begin{table}[h]
\caption{Errors of Example \ref{examp-ivp-nonlinear} by spectral collocation scheme in the case C22 of $\psi=x^{2.3},[0,1]$ with different $\mu$ and $N$.}\label{tabe2_2}
\begin{tabular}{c|ccccc}\hline
$N$ &  $\mu=0.1$ &  $\mu=0.3$  & $\mu=0.5$ & $\mu=0.7$  & $\mu=0.9$  \\ \hline
 4 &4.321e-04 &1.216e-03 &3.847e-03 &1.130e-02 &2.641e-02 \\
 8 &1.890e-06 &1.438e-06 &1.732e-07 &1.453e-07 &4.692e-07 \\
16 &6.983e-08 &1.049e-08 &5.052e-10 &9.476e-10 &3.269e-10 \\
32 &2.379e-09 &8.250e-11 &1.104e-12 &2.439e-12 &4.344e-13 \\
64 &7.771e-11 &6.462e-13 &1.699e-13 &1.095e-13 &7.183e-14 \\
\hline
\end{tabular}
\end{table}

\subsection{Fractional BVPs}
Let $1<\mu<2$. In this subsection, we apply the MJFs spectral and collocation method to the BVPs with the $\psi$-fractional derivative. As a standard example, we
consider $\psi$-fractional Helmholtz equation
\begin{equation}\label{bvp-helmh}
  \lambda^2 u(x) - {_{\psi}{\rm D}_{a,x}^{\mu}}u(x)=f(x), \quad x\in (a,b).
\end{equation}
The homogenous Dirichlet condition  $u(a)=u(b)=0$ is used here in order to ensure the wellposedness of the above model problem.

In the Petrov-Galerkin scheme (\ref{spec-PGal-bvp-rl}), denoting the mass and stiff matrix by $\mathbf{M}=(m_{ij})$ and $\mathbf{S}=(s_{ij})$ with
\begin{eqnarray*}
  m_{ij} &=& (\phi_i,J_{j+1,\psi}^{-1,-1})_{\varpi^{0,0}},\quad i,j=1,\cdots,N-1, \\
  s_{ij} &=& ({_{\psi}^R{\rm D}_{a,x}^{\mu-1}}\phi_i,{\delta }^{1}_{\psi}J_{j+1,\psi}^{-1,-1})_{\varpi^{0,0}}, \quad i,j=1,\cdots,N-1,
\end{eqnarray*}
we have the discrete representation
$$\left(\lambda^2 \mathbf{M} +\mathbf{S}\right)\mathbf{u}=\mathbf{f},$$
where $\mathbf{f}=[(f,J_{2,\psi}^{-1,-1})_{\varpi^{0,0}},\cdots,(f,J_{N,\psi}^{-1,-1})_{\varpi^{0,0}}]^T$ and $\mathbf{u}=
[\widehat{u}_1,\cdots,\widehat{u}_{N-1}]^T$.
As stated in Subsection \ref{subsec:4.2.1}, the stiff matrix is diagonal with $s_{ii}=\frac{\kappa\Gamma(i+\mu)}{(2i+1)\Gamma(i)}$. However, the mass matrix is full, which can be computed exactly by the Gauss-MJF-type quadrature in Theorem \ref{Gauss-MJF-quad}.

In the spectral collocation scheme (\ref{ab-bvp-dictret-M}), making use of the differentiation matrix $\mathbf{D}^\mu$, we have the discrete representation
$$\left(\lambda^2 \mathbf{I} -\mathbf{D}^\mu\right)\mathbf{u}=\mathbf{f},$$
where $\mathbf{I}$ is the identity matrix, $\mathbf{u}=u(\mathbf{x}), \mathbf{f}=
f(\mathbf{x})$ and $\mathbf{x}=[x_1^{\alpha,\beta},\cdots,x_{N-1}^{\alpha,\beta}]^T$.

\begin{example}\label{examp-bvp}Consider the following three cases:
\begin{enumerate}
  \setlength{\itemindent}{1.5em}
  \item [C31.] Solution: $u(x)=(\psi(x)-\psi_a)^{\mu-1}\sum_{k=1}^{30}\frac{J_{k,\psi}^{1-\mu,\mu-1}(x)}{k!}.$
  \item [C32.] Smooth solution: $u(x)=\sin(\kappa\pi(\psi(x)-\psi_a)).$
  \item [C33.] Solution of low regularity: $u(x)=(\psi_b-\psi(x))(\psi(x)-\psi_a)^{2\mu}.$
\end{enumerate}
\end{example}

\begin{table}[h]
\caption{Errors of Example \ref{examp-bvp} ($\lambda=0$) by Petrov-Galerkin scheme in the case C31 of $\psi=x(x+1)/2,[1,3]$ with different $\mu$ and $N$.}\label{tabe31_1}
\begin{tabular}{c|ccccc}\hline
$N$ &  $\mu=1.1$ &  $\mu=1.3$  & $\mu=1.5$ & $\mu=1.7$  & $\mu=1.9$  \\ \hline
  4 &4.863e-02 &4.088e-02 &3.086e-02 &3.289e-02 &4.054e-02 \\
 8 &2.470e-05 &1.833e-05 &1.215e-05 &1.357e-05 &1.640e-05 \\
12 &1.930e-09 &1.325e-09 &8.122e-10 &9.226e-10 &1.115e-09 \\
16 &4.952e-14 &3.331e-14 &2.176e-14 &2.176e-14 &2.709e-14 \\
18 &5.773e-15 &5.995e-15 &5.551e-15 &5.329e-15 &7.994e-15 \\
\hline
\end{tabular}
\end{table}

It verifies that $u(a)=0,u(b)\neq0$ and
${_\psi{\rm D}_{a,x}^{-(2-\mu)}}u(a)={_\psi{\rm D}_{a,x}^{-(2-\mu)}}u(b)=0$ in the case C31. Hence the Petrov-Galerkin scheme is used to solve the problem (\ref{bvp-helmh}) with $\lambda=0$ for the case C31. The numerical errors are listed in Table \ref{tabe31_1}. It is shown that the spectral accuracy is achieved for various fractional order $\mu$.

\begin{table}[h]
\caption{Errors of Example \ref{examp-bvp} ($\lambda=0$) by spectral collocation scheme ($\alpha=\beta=0$) in the case C32 of $\psi=x(x+1)/2,[1,4]$ with different $\mu$ and $N$.}\label{tabe32_1}
\begin{tabular}{c|ccccc}\hline
$N$ &  $\mu=1.1$ &  $\mu=1.3$  & $\mu=1.5$ & $\mu=1.7$  & $\mu=1.9$  \\ \hline
 4 &1.836e+00 &7.209e-01 &3.675e-01 &1.384e-01 &8.632e-02 \\
 8 &4.660e-03 &1.852e-03 &9.221e-04 &3.823e-04 &1.096e-04 \\
12 &1.893e-06 &7.428e-07 &3.508e-07 &1.379e-07 &3.547e-08 \\
16 &2.223e-10 &8.576e-11 &3.870e-11 &1.481e-11 &3.727e-12 \\
20 &8.604e-14 &5.862e-14 &2.387e-14 &7.894e-14 &1.499e-14 \\
\hline
\end{tabular}
\end{table}

\begin{table}[h]
\caption{Errors of Example \ref{examp-bvp} ($\mu=1.55$) by spectral collocation scheme ($\alpha=\beta=0.5$) in the case C32 of $\psi=\tan(x),[0,\pi/3]$ with different $\lambda$ and $N$.}\label{tabe32_2}
\begin{tabular}{c|ccccc}\hline
$N$ &  $\lambda=1$ &  $\lambda=10$  & $\lambda=10^2$ & $\lambda=10^3$  & $\lambda=10^4$  \\ \hline
 4 &3.744e-01 &1.648e-02 &1.677e-04 &1.677e-06 &1.677e-08 \\
 8 &1.444e-03 &2.042e-04 &2.146e-06 &2.146e-08 &2.146e-10 \\
12 &6.794e-07 &2.020e-07 &2.468e-09 &2.468e-11 &2.468e-13 \\
16 &8.621e-11 &3.872e-11 &6.170e-13 &6.162e-15 &4.441e-16 \\
20 &4.796e-14 &8.923e-15 &6.523e-16 &3.331e-16 &2.220e-16 \\
\hline
\end{tabular}
\end{table}

\begin{table}[h]
\caption{Errors of Example \ref{examp-bvp} ($\mu=1.45$) by spectral collocation scheme ($\lambda=10$) in the case C32 of $\psi=\sin(x),[-\pi/3,\pi/3]$ with different $\alpha,\beta$ and $N$.}\label{tabe32_3}
\begin{tabular}{c|ccccc}\hline
$N$ &  $\alpha=\beta=-0.5$ &  $\alpha=\beta=0.5$  & $\alpha=\beta=1$ & $\alpha=0.5,\beta=-0.5$  & $\alpha=-0.5,\beta=2$  \\ \hline
4 &1.991e-02 &1.610e-02 &1.723e-02 &2.380e-02 &7.338e-03 \\
 8 &1.102e-04 &1.852e-04 &2.222e-04 &3.113e-04 &1.122e-04 \\
12 &5.306e-08 &1.933e-07 &2.807e-07 &3.483e-07 &1.072e-07 \\
16 &7.560e-12 &4.059e-11 &7.115e-11 &7.624e-11 &1.897e-11 \\
20 &1.077e-14 &7.813e-15 &3.225e-14 &1.979e-14 &8.632e-15 \\
\hline
\end{tabular}
\end{table}

We use the spectral collocation scheme to solve the problem (\ref{bvp-helmh}) for the cases C32 and C33 by taking different values of $N,\alpha,\beta,\mu,\lambda$.  The numerical errors are listed in Tables \ref{tabe32_1}-\ref{tabe33_1}. It is shown that the spectral accuracy is achieved in the case C32 for different values of parameters since the solution is smooth. However, the finite convergent order is observed from Table \ref{tabe33_1} for the case C33 corresponding to the low regularity of the solution.

\begin{table}[h]
\caption{Errors of Example \ref{examp-bvp} ($\alpha=\beta=-0.5$) by spectral collocation scheme ($\lambda=2$) in the case C33 of $\psi=\sin(x),[-\pi/3,\pi/3]$ with different $\mu$ and $N$.}\label{tabe33_1}
\begin{tabular}{c|ccccc}\hline
$N$ &  $\mu=1.02$ &  $\mu=1.2$  & $\mu=1.4$ & $\mu=1.6$  & $\mu=1.8$  \\ \hline
4 &9.669e-04 &6.346e-03 &3.984e-03 &5.256e-03 &1.712e-02 \\
 8 &8.021e-05 &2.006e-04 &4.521e-05 &2.098e-05 &1.692e-05 \\
16 &6.623e-06 &7.124e-06 &7.984e-07 &2.051e-07 &8.867e-08 \\
32 &4.620e-07 &2.514e-07 &1.611e-08 &2.332e-09 &5.695e-10 \\
64 &2.931e-08 &8.960e-09 &3.304e-10 &2.735e-11 &3.819e-12 \\
128 &1.783e-09 &3.210e-10 &6.809e-12 &3.215e-13 &1.429e-13 \\
256 &1.058e-10 &1.152e-11 &1.659e-13 &1.751e-13 &1.537e-13 \\
\hline
\end{tabular}
\end{table}

\begin{remark} The interesting facts about the convergence behaviour of the spectral method and collocation method are confirmed by the numerical tests, which are summarized below:
\begin{enumerate}
  \item [$\bullet$]The exponential convergence is demonstrated for both the Petrov-Galerkin method and the spectral collocation method but in the different settings.
  \item [$\bullet$]The function $\psi(x)$ has little effect on the convergence behaviour of our numerical method. The differentiation matrices of the spectral collocation for various options of $\psi(x)$ are almost the same.
  \item [$\bullet$]The well-designed Petrov-Galerkin spectral method can achieve both the exponential convergence and the well-conditioned discrete system. The spectral collocation method can achieve the same convergence rate as the Petrov-Galerkin spectral method.
 \item [$\bullet$]The influence of parameters $\alpha,\beta$ of the spectral collocation scheme upon the computational accuracy is not obvious.
\end{enumerate}
\end{remark}

\section{Conclusions} \label{sec:6}
The main reason to consider the $\psi$-fractional differential equations lies in: (i) We have a more general approach to solve various problems by the choices of $\psi(x)$. (ii)
We can discover the inner structure of various fractional order differential equations through numerical simulation. (iii) We can use these tools to explain and predict natural phenomena in the real world.

In this paper, we deal with the spectral approximation to the fractional calculus with respect to function $\psi$, also named as $\psi$-fractional calculus, which generalizes the Hadamard and the common-used Riemann-Liouville fractional calculi. We consider spectral-type methods with mapped Jacobi functions as basis functions and obtain efficient algorithms to solve $\psi$-fractional differential equations. In particular, we setup the Petrov-Galerkin  spectral method and spectral collocation method for initial and boundary value problems involving Rieman-Liouville and Caputo $\psi$-fractional derivatives. We develop basic approximation theory for the MJFs and conduct the error estimates of the presented methods. We also derive a recurrence relation to evaluate the collocation differentiation matrix for implementing the spectral collocation algorithm. Numerical examples confirm the theoretical results and demonstrate the effectiveness of the spectral and collocation methods.

The demand of the flexibility of the numerical algorithm pushes the multi-domain method into consideration \cite{ZhaoMK19}. The presented method can be designed into a multi-domain version. Besides, we note that it is not difficult to employ the spectral collocation method to solve the variable-order fractional differential equations with more general form \cite{ZhaZ23}. We will conduct the error analysis of the method for solving the variable-order (non-constant case) $\psi$-fractional differential problem in our further work. Meanwhile, since the well-posedness and regularity
of the variable-order fractional differential equations are still open \cite{ZheW22}, we also expect to discuss them in the future plan.

\section{Acknowledgement}
 C. L. was partly supported by the National Natural Science Foundation of China under Grant No. 12271339.

\section*{Appendix A}
We first give the generalized Gronwall's inequality.

\noindent{\textbf{Theorem A.1.}}\emph{
 Assume that $\xi(x)$ is a nonnegative function locally integrable on $[a,b]$, $C>0$ is a constant, and $\eta(x)$ is continuous, non-negative such that
$$ \eta(x)\leq \xi(x)+C\int_a^x\psi'(s)(\psi(x)-\psi(s))^{\mu-1}\eta(s){\rm d}s,\quad x\in[a,b], $$
Then for $x\in[a,b]$, there holds
\begin{equation}\label{G-Gronwallinq1}
\eta(x)\leq \xi(x)+ \int_{a}^{x}\sum_{n=1}^{\infty}\frac{(C\Gamma(\mu))^n}{\Gamma(n\mu)}
\psi'(\tau)(\psi(x)-\psi(\tau))^{n\mu-1}\xi(\tau){\rm d}\tau.
\end{equation}
In particular, if $\xi(x)$ is be a nondecreasing function on $[a,b]$, then
\begin{equation}\label{G-Gronwallinq-2}
\eta(x)\leq\xi(x)E_{\mu}\left(C\Gamma(\mu)(\psi(x)-\psi_a)^\mu\right),
\end{equation}
where $E_{\beta}(s):=\sum_{k=0}^\infty\frac{s^k}{\Gamma(1+k\beta)}$ denotes the Mittag-Leffler function.}
\begin{proof}~ The theorem is a special case $\lambda=0$ of Theorem 2.3 in \cite{KucMFF22}.
\end{proof}

We estimate the projection error at the endpoints as follows.

\noindent{\textbf{Theorem A.2.}} \emph{
Let $\alpha,\beta>-1$ and $u\in B^m_{\alpha,\beta}(a,b)$, we have\begin{enumerate}
          \item If $\alpha+1<m\leq N+1$, then
\begin{equation*}
  |(u-\mathbf{P}_N^{\alpha,\beta}u)(b)|\leq cm^{-1/2}N^{1+\alpha-m}\|{\delta}^{m}_{\psi} u\|_{\varpi^{\alpha+m,\beta+m}};
\end{equation*}
          \item If $\beta+1<m\leq N+1$, then
\begin{equation*}
  |(u-\mathbf{P}_N^{\alpha,\beta}u)(a)|\leq cm^{-1/2}N^{1+\beta-m}\|{\delta}^{m}_{\psi} u\|_{\varpi^{\alpha+m,\beta+m}},
\end{equation*}
        \end{enumerate}
where $c$ is a positive constant independent of $m,N$ and $u$.}
\begin{proof}~ For $n\geq k$, denote $h_{n,k}^{\alpha,\beta}=(d_{n,k}^{\alpha,\beta})^2\gamma_{n-k}^{\alpha+k,\beta+k}$ and let $\widetilde{m}=\min\{m,N+1\},$ by Cauchy-Schwarz inequality we have
\begin{equation*}
  \begin{split}
&|(u-\mathbf{P}_N^{\alpha,\beta}u)(b)|=\left|\sum_{n=N+1}^\infty\widehat{u}_nJ_{n,\psi}^{\alpha,\beta}(b)\right|
 \leq\sum_{n=N+1}^\infty|\widehat{u}_n||J_{n,\psi}^{\alpha,\beta}(b)|\\
\leq&
\left(\sum_{n=N+1}^\infty|J_{n,\psi}^{\alpha,\beta}(b)|^2(h_{n,\widetilde{m}}^{\alpha,\beta})^{-1}\right)^{1/2}
\left(\sum_{n=N+1}^\infty|\widehat{u}_n|^2h_{n,\widetilde{m}}^{\alpha,\beta}\right)^{1/2}\\
\leq&
\left(\sum_{n=N+1}^\infty|J_{n,\psi}^{\alpha,\beta}(b)|^2(h_{n,\widetilde{m}}^{\alpha,\beta})^{-1}\right)^{1/2}
\|{\delta}^{\widetilde{m}}_{\psi}u\|_{\varpi^{\alpha+\widetilde{m},\beta+\widetilde{m}}}.
\end{split}
\end{equation*}
Since $J_{n,\psi}^{\alpha,\beta}(b)=P_n^{\alpha,\beta}(1)$, along the same arguments as Lemma 3.10 in \cite{SheTW11} (page 135), we have
\begin{equation*}
  \sum_{n=N+1}^\infty|J_{n,\psi}^{\alpha,\beta}(b)|^2(h_{n,\widetilde{m}}^{\alpha,\beta})^{-1}
\leq c \widetilde{m}^{-1}N^{2(1+\alpha-\widetilde{m})}.
\end{equation*}
This gives the first inequality. Since $J_{n,\psi}^{\alpha,\beta}(a)=P_n^{\alpha,\beta}(-1)$, we derive the second inequality as above.
\end{proof}

The stability of the interpolation operator is given in the following theorem.\\
\noindent{\textbf{Theorem A.3.}} \emph{For any $v(x)\in C[a,b]\cap B^1_{\alpha,\beta}(a,b)$, we have
\begin{equation*}
  \|\mathbf{I}_N^{\alpha,\beta}v\|_{\varpi^{\alpha,\beta}}\lesssim \left(N^{-\alpha-1}|v(b)|+N^{-\beta-1}|v(a)|+\|v\|_{\varpi^{\alpha,\beta}}+
\kappa^{-2} N^{-1}|v|_{B^1_{\alpha,\beta}}\right).
\end{equation*}}
\begin{proof}~ Note that $x(s)=\psi^{-1}\left(\frac{s+1}{\kappa}+\psi_a\right)$ and
let $\widetilde{v}(s)=v(x(s))$. Then
\begin{equation*}
  \mathbf{I}_N^{\alpha,\beta}v(x)=I_N^{\alpha,\beta}\widetilde{v}(s):=\sum_{j=0}^N
\widetilde{v}(s_j^{\alpha,\beta})l_j(s),\quad s\in[-1,1],
\end{equation*}
with $I_N^{\alpha,\beta}$ the Jacobi-Gauss-Lobatto interpolation operator and
$$l_j(s)=\prod_{i=0,~i\neq j}^N\frac{s-s_i^{\alpha,\beta}}{s_ j^{\alpha,\beta}-s_i^{\alpha,\beta}},\quad j=0,1,\cdots,N$$
the standard Lagrange interpolation basis functions at nodes $\{s_j^{\alpha,\beta}\}_{j=0}^N$.

Because of
\begin{equation*}
  \int_a^b \left(v(x)\right)^2\varpi^{\alpha,\beta}(x){\rm d}x=\int_{-1}^1
\left(\widetilde{v}(s)\right)^2\omega^{\alpha,\beta}(s){\rm d}s,
\end{equation*}
this means that
$\|v\|_{\varpi^{\alpha,\beta}}=\|\widetilde{v}\|_{\omega^{\alpha,\beta}}$.
Moreover, by
\begin{equation*}
  \int_{-1}^1\left(\frac{{\rm d}\widetilde{v}}{{\rm d}s}\right)^2\omega^{\alpha,\beta}(s){\rm d}s=
\int_a^b\left({\kappa^{-1}}{{\delta}^{1}_{\psi}v}\right)^2\varpi^{\alpha,\beta}(x){\rm d}x=\frac{1}{\kappa^2}\left\|{\delta}^{1}_{\psi}v\right\|_{\varpi^{\alpha,\beta}},
\end{equation*}
we have
$\|{\delta}^{1}_{\psi} v\|_{\varpi^{\alpha,\beta}}={\kappa^{2}}\|\partial_s\widetilde{v}\|_{\omega^{\alpha,\beta}}$.

Now it follows from Lemma 3.11 in \cite{SheTW11} that
\begin{equation*}
\begin{split}
  \|I_N^{\alpha,\beta}\widetilde{v}(s)\|_{\omega^{\alpha,\beta}}&\lesssim
N^{-\alpha-1}|\widetilde{v}(1)|+N^{-\beta-1}|\widetilde{v}(-1)|+
\|\widetilde{v}\|_{\omega^{\alpha,\beta}}+
N^{-1}|\widetilde{v}|_{1,\omega^{\alpha+1,\beta+1}}\\
&\lesssim
N^{-\alpha-1}|{v}(b)|+N^{-\beta-1}|{v}(a)|+
\|{v}\|_{\varpi^{\alpha,\beta}}+\kappa^{-2}
N^{-1}|{v}|_{B^1_{\alpha,\beta}}.
\end{split}
\end{equation*}
This ends the proof.
\end{proof}

\noindent\textbf{Theorem A.4.} \emph{Let $\alpha,\beta>-1$ and $\phi\in F_N^{\psi}$. Then we have
\begin{equation*}
  \|{\delta}^{m}_{\psi}\phi\|_{\varpi^{\alpha+m,\beta+m}}\lesssim N^m\|\phi\|_{\varpi^{\alpha,\beta}}.
\end{equation*}}
\begin{proof}~Let
\begin{equation*}
  \phi(x)=\sum_{k=0}^N \widehat{\phi}_kJ_{k,\psi}^{\alpha,\beta}(x),\quad
\widehat{\phi}_k=\frac{\int_{a}^b \phi(x) J_{k,\psi}^{\alpha,\beta}(x)
\varpi^{\alpha,\beta}(x){\rm d}x}
{\|J_{k,\psi}^{\alpha,\beta}\|^2_{\varpi^{\alpha,\beta}}}.
\end{equation*}
Then
\begin{equation*}
  \|\phi\|^2_{\varpi^{\alpha,\beta}}=\sum_{k=0}^N \widehat{\phi}^2_k\gamma_k^{\alpha,\beta},
\end{equation*}
and
\begin{equation*}
\begin{split}
  &\|{\delta}^{m}_{\psi}\phi\|^2_{\varpi^{\alpha+m,\beta+m}}=\sum_{k=m}^N \widehat{\phi}^2_k\left(d_{k,m}^{\alpha,\beta}\right)^2\gamma_{k-m}^{\alpha+m,\beta+m}
=\sum_{k=m}^N \widehat{\phi}^2_k \gamma_{k}^{\alpha,\beta} \left(d_{k,m}^{\alpha,\beta}\right)^2\frac{\gamma_{k-m}^{\alpha+m,\beta+m}}
{\gamma_k^{\alpha,\beta}}\\
&\leq \kappa^{2m}\frac{(2N+\alpha+\beta+1)N!\Gamma(N+m+\alpha+\beta+1)}
{(2(N-m)+\alpha+\beta+1)(N-m)!\Gamma(N+\alpha+\beta+1)}\sum_{k=m}^N \widehat{\phi}^2_k \gamma_{k}^{\alpha,\beta}\\
&\lesssim N^{2m}\|\phi\|^2_{\varpi^{\alpha,\beta}}.
\end{split}
\end{equation*}
The proof is completed.
\end{proof}


\bigskip
\end{document}